\newtheorem{theorem}{Theorem}[section]
\newtheorem{proposition}[theorem]{Proposition}
\theoremstyle{definition}
\newtheorem{condition}[theorem]{Condition}
\newtheorem{definition}[theorem]{Definition}
\newtheorem{lemma}[theorem]{Lemma}
\newtheorem{remark}[theorem]{Remark}
\numberwithin{equation}{section}
\providecommand{\norm}[1]{\left\lVert #1 \right\rVert}
\providecommand{\abs}[1]{\left\lvert #1 \right\rvert}
\newcommand{\F}{\mathcal F}                
\newcommand{\eps}{\varepsilon}             
\newcommand{\pd}{\partial}                 
\renewcommand{\d}{{\rm{d}}}                
\newcommand{\IS}[1]{\mathcal{#1}}	   
\newcommand{\R}{\mathbb{R}}                
\newcommand{\N}{\mathbb{N}}                
\newcommand{\SL}{\Delta_{\Gamma}}          
\newcommand{\SG}{\nabla_{\Gamma}}          
\newcommand{\G}{\Gamma}
\newcommand{\C}[1]{{C^#1(\Gamma)}} 	   
\renewcommand{\L}[1]{{L^#1(\Gamma)}}       
\newcommand{\jump}[1]{\left[ #1 \right]_-^+}
\newcommand{\mres}{\mathbin{\vrule height 1.6ex depth 0pt width
		0.15ex\vrule height 0.15ex depth 0pt width 0.6ex}}
\renewcommand{\setminus}{\mathbin{\backslash}}
\newcommand{\thmcite}[2]{\hspace{-0.02em}{\cite[#1]{#2}}}
\newcommand{\supp}{\operatorname{supp}} 
\newcommand{\epsk}{{\varepsilon_k}}
\newcommand{\Id}{\operatorname{Id}}
\renewcommand{\div}{\operatorname{div}}
\newcommand{\solspace}{\mathcal{W}}
\newcommand{\X}{\mathcal{X}}
\newcommand{\M}{\mathcal{M}}
\newcommand{\dprodH}[3]{\left\langle #2, #3 \right\rangle_{H^{-1}(#1),H^1(#1)}}
\title[The Sharp Interface Limit of a Model for Phase Separation on Membranes]{On the Sharp Interface Limit of a Model for Phase Separation on Biological Membranes}
\author[H. Abels]{Helmut Abels}
\address{Helmut Abels, Fakult\"at f\"ur Mathematik, Universit\"at Regensburg, 93040 Regensburg, Germany, e-mail: helmut.abels@mathematik.uni-regensburg.de}
\author[J. Kampmann]{Johannes Kampmann}
\address{Johannes Kampmann, Fakult\"at f\"ur Mathematik, Universit\"at Regensburg, 93040 Regensburg, Germany, e-mail: johannes.kampmann@mathematik.uni-regensburg.de}
\begin{document}

\begin{abstract}
We rigorously prove the convergence of weak solutions to a model for lipid raft formation in cell membranes which was recently proposed by Garcke et al.\ \cite{GKRR} to weak (varifold) solutions of the corresponding sharp-interface problem for a suitable subsequence. In the system a Cahn-Hilliard type equation on the boundary of a domain is coupled to a diffusion equation inside the domain.
The proof builds on techniques developed by Chen \cite{ChenCH} for the corresponding result for the Cahn-Hilliard equation. 
\end{abstract}

\keywords{partial differential equations on surfaces, phase separation, Cahn-Hilliard equation, sharp-interface limit, singular limit, varifolds}
\subjclass[2010]{35K55,35Q92,35D99,35R35,92C37}

\maketitle

\section{Introduction}

In \cite{GKRR} Garcke, Rätz, Röger and the second author proposed a model for phase separation on biological membranes based on the interplay between a thermodynamic equilibrium process and nonequilibrium effects, in particular active transport processes on the cell membrane.

Cell membranes consist of saturated and unsaturated lipid molecules which arrange themselves in a bilayer structure. Moreover, other molecules such as cholesterols or proteins are included. The lateral organisation of these different components is important for the functioning of the cell, contributing to protein trafficking, endocytosis, and signalling \cite{FSH2, RL}.  

A lot of attention in this context is given to the emergence of so-called lipid rafts. These rafts are intermediate sized domains ($10-200$  nm), characterized as regions consisting mainly of saturated lipid molecules enriched with cholesterols \cite{PIK}. We refer the reader to the overview \cite{FS} and the list of references therein for a discussion of the experimental evidence for their existence.
 
The model by Garcke et al.\ is a phase-field model derived from thermodynamic conservation laws, both on the membrane and in the cytosol. The former describes the phase separation between saturated and unsaturated lipid molecules, from which the lipid rafts emerge. The latter describes the dynamic inside the cytosol. The equations on the membrane and in the cytosol are then coupled by an in-/out-flux $q$ related to exchange processes between the cell and its membrane. From the viewpoint of thermodynamics, this exchange term can be interpreted as an external source term in both the membrane and cytosol equations. 

In order to introduce the model, let $B \subset \R^3$ be a bounded domain with smooth boundary $\G:=\partial B.$ The set $B$ and the surface $\G$ represent the cell and its outer membrane respectively. The basic quantities in the model are the rescaled relative concentration $\varphi$ of saturated lipids in the membrane, the relative concentration $v$ of membrane-bound cholesterol and the relative concentration $u$ of cytosolic cholesterol. We normalize $\varphi$ such that $\varphi = 1$ represents the pure saturated lipid phase and $\varphi=-1$ within the pure unsaturated lipid phase. Moreover, $v=1$ and $u=1$ correspond to maximal saturation for the cholesterol concentrations.

The inclusion of the cholesterol concentration in the model is due to the fact that because of their structure, cholesterol molecules have a strong affinity for saturated lipids. Models for phase separation on cell membranes thus often include the cholesterol concentration as it is argued that the strong affinity between saturated lipids and cholesterols enables active cellular transport processes of cholesterol to influence the phase separation process, leading to the formation of lipid rafts. We refer the reader to corresponding discussions in \cite{FSH,RL,GSR,Fo} and of course \cite{GKRR}. That the formation of lipid rafts is linked to the presence of cholesterols has also been observed in experiments, see for example \cite{LR}.

Let now for $\eps > 0$
\begin{equation}
\label{eq:energy}
\F(v, \varphi) = \int_\Gamma\frac\eps2|\nabla\varphi|^2 +
\eps^{-1}W(\varphi) + \frac1{2}(2v - 1 - \varphi)^2 \, d\mathcal{H}^2,
\end{equation}
with the double-well potential $W(s)=(1-s^2)^2.$ The functional $\F$ consists of two parts. The first part $\int_\Gamma\frac\eps2|\nabla\varphi|^2 +
\eps^{-1}W(\varphi)\, d\mathcal{H}^2$ is a classical Ginzburg-Landau energy, modeling the phase separation between the two lipid phases. The second part $\frac1{2}\int_{\G}(2v - 1 - \varphi)^2\, d\mathcal{H}^2$ accounts for the affinity between saturated lipid molecules and membrane-bound cholesterol.

We now assume that the evolution of the membrane quantities is driven by chemical potentials derived from the functional $\F.$ Namely, we introduce
\begin{align*}
\mu &:= \frac{\delta \F}{\delta \varphi} 
=  - \eps \SL \varphi + \eps^{-1}W'(\varphi) - (2v - 1 - \varphi),\\
\theta &:= \frac{\delta \F}{\delta v} = 2(2v - 1 - \varphi),
\end{align*}
and say that $\F$ is the surface free energy functional of the model.

We then consider the following bulk--surface system consisting of a surface Cahn--Hilliard equation coupled to a bulk--diffusion equation,
\begin{alignat}{2}
\label{eq:diffU}
\partial_t u &=  \Delta u &\qquad \text{in } B \times (0,T]&,\\
\label{eq:flux}
-  \nabla u \cdot \nu & = q &  
\text{on } \Gamma \times (0,T]&,\\
\label{eq:CH1}
\pd_t \varphi &= \SL \mu
&\text{on } \Gamma \times (0,T]&,\\
\label{eq:CH2}
\mu &= - \eps \SL \varphi + \eps^{-1}W'(\varphi) - (2v - 1 - \varphi)
&\text{on } \Gamma \times (0,T]&,\\
\label{eq:v}
\pd_t v &= \SL \theta + q = 4\SL v - 2\SL
\varphi + q
&\text{on } \Gamma \times (0,T]&\\
\theta &=  2(2v - 1 -
\varphi)&\text{on } \Gamma \times (0,T]& \label{eq:theta}
\end{alignat}
with initial conditions for $u$, $\varphi$ and $v$. Here we denote by $\nu$ the outer unit normal vector of $B$ on $\G.$

Let us comment on the meaning of the equations. The equations \eqref{eq:CH1} and \eqref{eq:v} describe the mass balance equations for the surface quantities. Moreover, \eqref{eq:diffU} and \eqref{eq:flux} model the evolution of the cytosolic cholesterol by diffusion. An important aspect is the inclusion of Neumann boundary conditions for the cytosolic diffusion. In dependence of the exchange term $q$, the cholesterol flux from the cytosol $B$ onto the membrane $\G$ appears as a source term for the evolution of the membrane-bound cholesterol $v$ in \eqref{eq:v}. Let us note that \eqref{eq:v} also includes a cross-diffusion, which stems from the cholesterol-lipid affinity in the surface energy $\F$. Finally, \eqref{eq:CH1} and \eqref{eq:CH2} model a Cahn-Hilliard dynamics for the lipid concentration and allow for a contribution from the cholesterol evolution via the last term. 

The discussion in \cite{GKRR} shows that the model is thermodynamically consistent for arbitrary constitutive choices for the exchange term $q$. Moreover, numerical simulations carried out in \cite[Section 5]{GKRR} illustrate how different constitutive choices for $q$ influence the qualitative behaviour of the coupled system. In particular, the model features the emergence of micro domains (or lipid rafts) for certain constitutive choices for $q.$ 

Our interest in this contribution is the sharp-interface limit $\eps\searrow 0$ in the lipid raft model. Formal results on this singular limit were already obtained by Garcke, Rätz, Röger and the second author in \cite[Section 4]{GKRR}. These results rely on formally matched asymptotics, where it is a priori assumed that solutions $(u_\varepsilon, \varphi_\varepsilon, v_\varepsilon, \mu_\varepsilon, \theta_\varepsilon)$ to the model \eqref{eq:diffU} -- \eqref{eq:theta} formally converges to a limit $(u,\varphi, v, \mu, \theta)$ as $\varepsilon \searrow 0$ and that for each $t \in (0,T]$ the zero level set $\lbrace \varphi_\varepsilon(\cdot,t) = 0 \rbrace$ converges to a smooth curve $\gamma(t) \subset \Gamma.$ Moreover, the technique assumes that all functions $(u_\varepsilon, \varphi_\varepsilon, v_\varepsilon, \mu_\varepsilon, \theta_\varepsilon)$ admit suitable expansions in $\varepsilon,$ both in a neighborhood of the interface and away from the interface. We refer the reader to \cite[Section 4]{GKRR} for more details and to \cite{Fi88, CF, AHM, NMHS} as well as \cite{NAY, KC} for a general introduction while acknowledging that this is by far not a comprehensive list of references.  

The sharp interface model obtained from the formal asymptotic analysis is given by 
\begin{alignat}{2}
\varphi &= \pm 1  &\quad &\text{on } \Gamma^{\pm}(t), t\in (0,T] ,\label{eq:sharp_a}\\
\partial_t u &= D \Delta u \qquad &&\text{in } B \times (0,T],\label{eq:sharp_b}\\
- D \nabla u \cdot \nu & = q  \qquad &&\text{on } \Gamma\times (0,T] ,\label{eq:sharp_c}\\
\Delta_\Gamma \mu &= 0  &&\text{on } \Gamma^{\pm}(t),t\in  (0,T],\label{eq:sharp_d}\\
\partial_t v &= \Delta_\Gamma \theta + q  &&\text{on } \Gamma^{\pm}(t), t\in (0,T],\label{eq:sharp_e}\\
\theta &= 2 \left(2v-1 \mp 1\right)  &&\text{on } \Gamma^{\pm}(t),t\in (0,T],\label{eq:sharp_f}\\
2 \mu + \theta&= c_0 \kappa_g  &&\text{on } \gamma(t), t\in (0,T] ,\label{eq:sharp_g}\\
\jump{\mu} &= 0  &&\text{on } \gamma(t),t\in (0,T] ,\label{eq:sharp_h}\\
\jump{\theta} &= 0  &&\text{on } \gamma(t),t\in (0,T] ,\label{eq:sharp_i}\\
- 2 \mathcal{V} &= \jump{\nabla_\Gamma \mu}\cdot\nu_\gamma  &&\text{on } \gamma(t),t\in (0,T] ,\label{eq:sharp_j}\\
- \mathcal{V} &= \jump{\nabla_\Gamma \theta}\cdot\nu_\gamma  &&\text{on } \gamma(t), t\in (0,T], \label{eq:sharp_k}
\end{alignat}
where $\jump{f}(x_0,t_0))=\lim_{h\to 0+} (f(x_0+\nu_\gamma(x_0,t_0)- f(x_0-\nu_\gamma(x_0,t_0))$ is the jump of a quantity $f$ across the interface $\gamma(t):= \partial \Gamma^+(t)$ and $\nu_\gamma(x_0,t_0) \in T_{x_0}\Gamma$ denotes the unit normal to $\gamma(t_0)$ in $x_0\in\gamma(t_0)$, pointing inside $\Gamma^+(t_0)$. The geodesic curvature of $\gamma(t)$ in $\Gamma$ is denoted by $\kappa_g(\cdot,t)$ and $\mathcal{V}(x_0,t_0)$ denotes the normal velocity of 
$\gamma(t_0)$ in $x_0 \in \gamma(t_0)$ in direction of $\nu_\gamma(x_0,t_0)$. For its precise definition, let  
$\gamma_t: U \to \gamma(t) \subset \Gamma$, $t\in (t_0-\delta,t_0+\delta)$ be a smoothly evolving family of local parameterizations of the curves $\gamma(t)$ by arc length over an open interval  $U \subset \mathbb{R}$ and let $\gamma_{t_0}(s_0)=x_0$ for some $s_0 \in U$. Then the normal velocity in $(x_0,t_0)$ is given by
\[ 
\mathcal{V}(x_0,t_0) =  \frac{d}{dt}\Big|_{t_0}\gamma_{t}(s_0) \cdot \nu_\gamma(x_0,t_0), 
\]
see also \cite{DDE}.

Apart from the formal technique discussed above, there are also rigorous results concerning singular limits of phase-field models and in particular of the Cahn-Hiliard equation. Matched asymptotic expansions can be justified rigorously in some cases (see e.g. \cite{ABC} for the Cahn-Hilliard equation). In contrast to these kind of results Chen employed ideas from geometric measure theory to prove that weak solutions to the Cahn-Hilliard equation approach weak solution of the Mullins-Sekerka equation \cite{ChenCH}. 

The main purpose of the present contribution is to extend Chen's result to the coupled bulk-surface system \eqref{eq:diffU}--\eqref{eq:theta} and thus proving rigorously the convergence of solutions to the lipid raft model to solutions of the sharp interface problem \eqref{eq:sharp_a}--\eqref{eq:sharp_k} as $\eps\searrow 0$.

Before stating our main result in full detail, we briefly introduce the concept of varifolds from geometric measure theory and fix some notation. This will help with the definition of a weak varifold solution to the sharp-interface problem \eqref{eq:sharp_a}--\eqref{eq:sharp_k}.

This paper is based on the work in the PhD thesis of the second author \cite{Diss}.

\subsection{Some Notation and Varifolds}\label{sec:gmt}
For a detailed presentation of geometric measure and varifold theory we refer the reader to the books of Simon \cite{LS}, Federer \cite{HF}, and the more accessible book by Morgan \cite{FMo} as well as the paper by Allard \cite{WA}. 

 In the sharp-interface limit which is our main application, the object in question will be a generalization of a curve on a two dimensional submanifold of $\R^3.$ Therefore for we assume from now on that $\X$ is a $l-$dimensional submanifold of $\R^n.$ Of course, this requires $l \leq n.$ 

 The symbol $\Sigma$ will always denote a $\sigma-$algebra on $\X.$ The Borel $\sigma$-algebra (i.e. the smallest $\sigma-$algebra that contains all open subsets of $\X$) will be denoted by $\mathcal{B}(\X)$.

For a measure $\lambda: \Sigma \rightarrow [0,\infty]$ and a measure space $(\X,\Sigma,\lambda),$ we recall that $\lambda$ is a Borel measure if $\mathcal{B}(\X) \subset \Sigma$ and that a inner regular Borel measure which is finite on compact sets is called Radon measure. A measure $\lambda: \Sigma \rightarrow (-\infty,\infty)$ is called a signed measure.	We denote the space of all signed Radon measures on $\X$ by $\M(\X).$

For any signed measure $\lambda$ the variation measure $\abs{\lambda}: \Sigma \rightarrow [0,\infty]$ is defined by
\[ \abs{\lambda}(A) := \sup\left\lbrace \left. \sum_{k\in\N} \abs{\lambda(A_k)} \right| \bigcup_{\kappa\in\N} A_k \subset A, A_k \bigcap A_l = \emptyset \text{ for all } k \neq l \right\rbrace. \]

By the Riesz Representation Theorem \cite[Theorem 1.54]{AFP}, the space $\M(\X)$ can also be characterized as a dual space of $C_0(\X),$ where $C_0(\X)$ is the closure of $C_c^\infty(\X)$ with respect to the supremums norm. Convergence in the sense of measures will always be the weak$-\ast$ convergence in $\M(\X)=\left(C_0(\X)\right)'.$

The aim of this section is to introduce the notion of a general varifold as a measure theoretic generalization of a submanifold.  By $\mathbb{S}^k(p)$ we denote for $k \leq l$ the Grassmanian of all $k-$dimensional subspaces of $T_p(\X)$ 
\[ \mathbb{S}^k(p) := \left\lbrace S \left| S  \text{ is a } k-\text{dimensional subspace of } T_p(\X)  \right. \right\rbrace.  \]
To introduce a topology on $\mathbb{S}^k(p),$ we introduce \[ V^kT_p(\X):= \left\lbrace \left. (v_1,\ldots,v_k) \right| v_1,\ldots, v_l \in T_p(\X) \text{ and linearly independent}\right\rbrace. \]
The topology on $\mathbb{S}^k(p)$ is then given as the quotient topology induced by the map
\[ \pi: V^kT_p(\X) \rightarrow \mathbb{S}^k(p)  \]
which sends a tuple of $k$ linearly independent vectors in $T_p(\X)$ onto the $k-$dimensional subspace they span.  

Moreover, we define $G_k(\X)$ as 
\[ G_k(\X) :=\left\lbrace (p,S) \left| p \in \X, S \in \mathbb{S}^k(p) \right. \right\rbrace. \]
Since $G_k(\X)$ is (at least locally for $U\subset \X$) diffeomorphic to $U\times\mathbb{S}^k(p),$ the topologies on $\X$ and $\mathbb{S}^k(p)$ induce a topology on $G_k(\X),$ see for example \cite[Lemma 2.2]{Lee}.
    
\begin{definition}[Varifold]
Let $\X \subset \R^n$ be an $l$-dimensional Riemanian manifold and let $G_k(\X)$ be defined as above. A general $k-$varifold (varifold for short) on $\X$ is a Radon measure on $G_k(\X).$  	
\end{definition}
\begin{remark}It is useful to introduce the following notation.
	\begin{enumerate}
		\item{The orthogonal projection of $T_p\X$ onto $S \in \mathbb{S}^k(p)$ will also be denoted by $S.$}
		\item{For $S \in \mathbb{S}^k(p)$ we denote by $\delta_{S}$ the Dirac measure concentrated on $S.$ That is, for a set $P \subset \mathbb{S}^k(p)$ we define \[ \delta_S(P):=\begin{cases}
			1, \quad \text{if } S \in P \\
			0, \quad \text{else.}
			\end{cases}\]}
		\item{Let $\X$ be an $l-$dimensional manifold. We identify $\mathbb{S}^{l-1}(p) \cong S_{l-1}(p)\mod\lbrace e_1, -e_1 \rbrace $ where $S_{l-1}(p)$ is the $(l-1)-$sphere in $T_p(\X)$ and $e_1$ is the first unit vector. As such, we identify $\mathbb{S}^{l-1}$ with the set of all unit normal vectors to unoriented $(l-1)$ planes in $T_p\X.$}
	\end{enumerate}
\end{remark}
\begin{definition}[Weight Measure of a Varifold]
	Let $V$ be an varifold on $\X.$ The measure $m_V$ on $\X$ defined as
	\[ m_V(A) := V(\lbrace \left.(p,S)\right| p \in A, S \in \mathbb{S}^k(p) \rbrace) = \int_{G_k(A)} \ dV(p,S) \]
	is called weight measure of $V.$
\end{definition}
\begin{definition}[First Variation of a Varifold]
	Let $V$ be an $(l-1)$-varifold on $\X.$ The first variation $\delta V: C^1_c(\X,T\X) \rightarrow \R$ is given as
	\[ \delta V (\xi) = \int_{G_k(\X)} D_\X \xi(p) : \left(\Id - S\otimes S\right) \ dV(p,S) \]
	for all $\xi \in C^1_c(\X,T\X).$ Here $D_\X \xi(p)$ denotes the differential of $\xi$ on $\X$, $A:B= \operatorname{tr} (A^\ast B)$ for all $A,B\in \mathcal{L}(T_x\X,\R^n)$ and $\Id$ denotes the identity on $T_x\X$ for each $x\in\X$.
\end{definition}

\section{Weak Solutions and Statement of the Main Results}\label{ch:asymptotics}

Throughout this paper, we consider the convergence of weak solutions $(u_\varepsilon,\varphi_{\varepsilon},v_{\varepsilon},\mu_\varepsilon,\theta_\varepsilon)$ to the diffuse interface problem \eqref{eq:diffU}--\eqref{eq:theta} to a weak (varifold) solution to the sharp interface problem \eqref{eq:sharp_a}--\eqref{eq:sharp_k} as $\varepsilon \searrow 0.$ In the case that the exchange term growth at most linearly, i.e. that $q: \R^2 \rightarrow \R$ is continuous and fulfils for some $C>0$
\begin{equation}\label{eq:growth_cond_q}
\abs{q(u,v)}\leq C(1+\abs{u}+\abs{v}) \qquad \forall \ u,v \in \R,
\end{equation} the existence of weak solutions $(u_\varepsilon,\varphi_{\varepsilon},v_{\varepsilon},\mu_\varepsilon,\theta_\varepsilon)$ to the diffuse interface problem is granted by Theorem \cite[Theorem 2.3]{Preprint1} or \cite[Theorem~4.2]{Diss} where it is proved that there are solutions belong to the space
\[ \solspace := \solspace_B \times \solspace_\Gamma^1 \times \solspace_\Gamma^1\times \solspace_\Gamma^2\times \solspace_\Gamma^2, \]
where
\begin{align*}
\solspace_B  &:= L^2\left(0,T;H^1(B)\right) \cap  H^1\left(0,T;\left(H^{1}(B)\right)'\right),\ \\ 
\solspace_\Gamma^1  &:= L^2(0,T;H^1(\Gamma))\cap H^1(0,T;H^{-1}(\Gamma)),  \text{ and }
\solspace_\Gamma^2  := L^2(0,T;H^1(\Gamma)).
\end{align*}
Here the equations are solved in the following weak sense:
\begin{align}
  \int_0^T \left\langle \partial_t u, \xi \right\rangle_{\left(H^1(B)\right)',H^1(B)} &= -\int_0^T\int_B \nabla u \cdot \nabla \xi - \int_0^T\int_{\Gamma} q(u,v)\xi,\label{eq:weak_form_diffU} \\
  \int_0^T \dprodH{\G}{\partial_t \varphi}{\eta} &= -\int_0^T\int_{\Gamma} \SG \mu \cdot \SG \eta, \label{eq:weak_form_time_der_phi} \\
  \int_0^T\int_{\Gamma} \mu \eta &= - \int_0^T\int_{\Gamma} \left[\varepsilon \SG \varphi \cdot \SG \eta + \frac{1}{\varepsilon} W'(\varphi)\eta -\frac{1}{\delta}\left( 2v - 1 -\varphi \right)\eta \right],\\
  \int_0^T \dprodH{\G}{\partial_t v}{\eta} &= -\int_0^T\int_{\Gamma} \SG \theta \cdot \SG \eta +\int_0^T\int_{\Gamma} q(u,v)\eta, \\
  \theta  &= \frac{2}{\delta} \left( 2v - 1 - \varphi\right) \text{ a.e. on } \G\times(0,T).\label{eq:weak_form_theta}
\end{align}
for all $\xi \in L^2(0,T;H^1(B))$ and $\eta \in L^2(0,T;H^1(\Gamma))$.
The initial values are attained in $L^2(B)$ and $L^2(\G)$ respectively.
Moreover,
\begin{align}
  \F(v(\cdot,t),\varphi(\cdot,t)) + \frac{1}{2}\int_B u(\cdot,t)^2
  + \int_0^t\int_B \frac{1}{2}|\nabla u|^2 + \int_0^t \int_{\Gamma} \left( \abs{\nabla_\Gamma \mu(\cdot,t)}^2 + \abs{\nabla_\Gamma \theta(\cdot,t)}^2 \right) 
  \, \nonumber \\ \leq\, C(T,v_0,\varphi_0,u_0). \label{eq:energy_est_lin_growth}
\end{align} holds for all $t \in (0,T],$ see also \cite[Theorem 2.3]{Preprint1}.

Following \cite{ChenCH}, we define weak (varifold) solutions to the sharp interface problem as follows. 

\begin{definition}\label{def:weak_sol_varifold}
	Let $E$ be a subset of $\Gamma\times[0,\infty)$ and assume that $\chi_E \in C^0([0,T);L^1(\Gamma))\cap L_{w^\ast}^\infty(0,T;BV(\Gamma)).$ Consider functions \[ \mu,\theta \in L^2_{loc}([0,T),H^1(\Gamma)) \] and \[ u \in H^1(0,T;H^{-1}(B))\cap L^2(0,T;H^1(B)), v \in H^1(0,T;H^{-1}(\Gamma))\cap L^2(0,T;H^1(\Gamma)).\] Let furthermore $V$ be a Radon measure on $[0,\infty)\times G_1(\G)$ such that $V_t$ is a varifold on $\Gamma$ for all $t\geq0.$
	
	We say that the tuple $(E,V,u,\mu,\theta)$ is a varifold solution to the sharp interface problem \eqref{eq:sharp_a}--\eqref{eq:sharp_k} if for all $T\geq0$ and for almost every $0\leq\tau\leq t \leq T$ and for all test functions \[ \psi_b \in C^\infty_c([0,T)\times\overline{B}), \psi_s \in C^\infty_c([0,T)\times\Gamma) \text{ and } Y \in C^1(\Gamma,T\G) \] the following holds:
	\begin{align} 
	\int_0^T \int_B &u(t,x) \partial_t \psi_b(t,x) \ dx \ dt = \\\nonumber  &\int_B u_0(x) \psi_b(0,x) \ dx + \int_0^T \int_B \nabla u(t,x) \cdot \nabla \psi_b(t,x) \ dx \ dt - \int_0^T \int_{\partial B} q \psi_b \ d\mathcal{H}^2(p) \ dt, \\ 
	\int_0^T \int_\Gamma&  -2 \chi_{E_t}(p) \partial_t \psi_s(t,p) + \SG \mu(t,p) \cdot \SG \psi_s(t,p) \ d\mathcal{H}^2(p) \ dt \nonumber \\ &= \int_\Gamma 2\chi_{E_0}(p) \psi_s(0,p) \ d\mathcal{H}^2(p), \\
	\int_0^T \int_\Gamma &-v(t,p)\partial_t \psi_s(t,p) + \SG \theta(t,p) \cdot \SG \psi_s(t,p) - q\psi_s(t,p) \ d\mathcal{H}^2(p) \ dt \nonumber \\ & = \int_\Gamma v_0(p) \psi_s(0,p) \ d\mathcal{H}^2(p), \\ 
	&\theta =  2 \left( 2v - 2 \chi_{E_t} \right),  \\
	-&\langle D\chi_{E_t}, (2\mu+\theta) Y \rangle = \langle \delta V_t, Y\rangle, \label{eq:weak_varifold_formulation_curv} \\
	&m_{V_t} \geq 2c_0\abs{D\chi_{E_t}}\qquad \text{in }\mathcal{M}(\Gamma), \\\label{eq:Energy}
	m_{V_t}(\Gamma) + &\int_\tau^t \int_\Gamma \abs{\SG\mu(s,p)}^2 + \abs{\SG\theta(s,p)}^2  + q\left(\theta(s,p)-u(s,p)\right) \ d\mathcal{H}^2(p) \ ds \leq m_{V_\tau}(\Gamma).
	\end{align}
\end{definition}
\begin{remark}
	The concept of a varifold solution given here coincides in the special case that $u=v=0$ with the varifold solutions introduced by Chen in \cite{ChenCH}. We refer the reader to \cite[Section 2.4]{ChenCH} for a detailed discussion of these solutions and a justification of the definition.
      \end{remark}

Our main convergence result establishes problem \eqref{eq:sharp_a}--\eqref{eq:sharp_k} rigorously as the sharp interface limit of the lipid raft model (for a suitable subsequence) if we suppose that the initial data are suitable in the following sense.
\begin{condition}[Assumptions for the inital data]\label{cond:well_prep_data}~\\
	We assume that there exist constants $C,M,m > 0$ and independent of $\eps\in (0,1]$ such that the initial data $(u^\eps_0, \varphi^\eps_0,v_0^\eps)\in L^2(B)\times H^1(\Gamma)\times H^1(\Gamma)$ fulfil
	\begin{alignat*}{2}
	\sup_{0<\eps<1} \left[ \F(\varphi^\eps_0, v_0^\eps) + \int_B \abs{u_0^\eps}^2 \ dx\right] &\leq C < \infty, &\\
	\int_B u_0^\eps \ dx + \int_{\G} v_0^\eps \ d\mathcal{H}^2 &= M &\forall \eps \in (0,1],\\
	\frac{1}{\abs{\G}} \int_{\G} \varphi_0^\eps \ d\mathcal{H}^2 &=m \in (-1,1) \  &\forall \eps \in (0,1].
	\end{alignat*}
\end{condition}
\begin{proposition}\label{prop:collected_conv}
	Let $T>0$ and consider initial data that fulfil Condition \ref{cond:well_prep_data} and the corresponding solution $(u_\eps,\varphi_{\eps},v_{\eps},\mu_\eps,\theta_\eps) \in \solspace$ to the diffuse interface problem \eqref{eq:diffU}--\eqref{eq:theta}. Then there exists a sequence $\lbrace \varepsilon_k \rbrace_{k\in\N}, \varepsilon_k \rightarrow 0$ as $k\rightarrow\infty,$ such that the following statements are true:
	\begin{enumerate}
		\item{There exists a set $Q^+ \subset [0,T) \times \Omega$ such that 
			\begin{enumerate}
				\item $\varphi_{\varepsilon_k}(x,t) \rightarrow \varphi(x,t):= \begin{cases} 1 \ \ \text{ for } (x,t) \in Q^+ \\ -1 \text{ else } \end{cases}$ almost everywhere in $(0,T) \times \Gamma.$
				\item $\varphi_{\varepsilon_k} \rightarrow \varphi$ in $C^{1/9}\left( [0,T]; L^2(\Gamma) \right).$
				\item $\chi_{Q^+} \in L_{w^\ast}^\infty\left(0,T; BV(\Gamma) \right).$
		\end{enumerate}}
		\item There exists a function $\mu \in L^2(0,T;H^1(\Gamma))$ such that \[ \mu_\epsk \rightharpoonup \mu \text{ in } L^2(0,T;H^1(\Gamma)).\]
		\item There exists a function $\theta \in L^2(0,T;H^1(\Gamma))$ such that \[ \theta_\epsk \rightharpoonup \theta \text{ in } L^2(0,T;H^1(\Gamma)).\]
		\item There exists a function $v \in L^2(0,T;L^2(\Gamma))$ such that \[ v_\epsk \rightharpoonup v \text{ in } L^2(0,T;L^2(\Gamma)).\]
		\item There exists a function $u \in L^2(0,T;H^1(B))\cap H^1(0,T;L^2(B))$ such that \[ u_\epsk \rightharpoonup u \text{ in } L^2(0,T;H^1(B))\cap H^1(0,T;L^2(B)).\]
	\end{enumerate}
\end{proposition}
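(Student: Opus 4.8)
The plan is to obtain all five limits from the uniform a priori bounds encoded in the energy estimate \eqref{eq:energy_est_lin_growth} together with Condition \ref{cond:well_prep_data}, using weak(-$\ast$) compactness for the quantities $u,v,\mu,\theta$ and a Modica--Mortola/compactness argument for the order parameter $\varphi$. First I would read off from \eqref{eq:energy_est_lin_growth} and \eqref{eq:energy}, uniformly in $\eps\in(0,1]$: (i) $\tfrac\eps2\|\SG\varphi_\eps\|_{L^\infty(0,T;\L2)}^2+\tfrac1\eps\|W(\varphi_\eps)\|_{L^\infty(0,T;\L1)}\le C$; (ii) $\|2v_\eps-1-\varphi_\eps\|_{L^\infty(0,T;\L2)}\le C$; (iii) $\|u_\eps\|_{L^\infty(0,T;L^2(B))}+\|\nabla u_\eps\|_{L^2(0,T;L^2(B))}\le C$; (iv) $\|\SG\mu_\eps\|_{L^2(0,T;\L2)}+\|\SG\theta_\eps\|_{L^2(0,T;\L2)}\le C$. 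Since $W(s)=(1-s^2)^2$ grows quartically, (i) already gives a uniform $\L4$-, hence $\L2$-, bound for $\varphi_\eps$, and then (ii) yields a uniform bound for $v_\eps$ in $L^\infty(0,T;\L2)$; weak-$\ast$ compactness produces the limit $v$ of statement (4). For statement (5), the trace estimate applied to (iii) bounds $u_\eps|_\G$ in $L^2(0,T;\L2)$ and, with (ii), the growth condition \eqref{eq:growth_cond_q} makes $q(u_\eps,v_\eps)$ uniformly bounded in $L^2(0,T;\L2)$; a maximal-regularity estimate for the heat equation \eqref{eq:diffU} with this Neumann flux then gives a uniform bound for $\partial_t u_\eps$ in $L^2(0,T;L^2(B))$, after which reflexivity yields the asserted weak limit in $L^2(0,T;H^1(B))\cap H^1(0,T;L^2(B))$.

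For statement (3), $\theta_\eps$ is a constant multiple of $2v_\eps-1-\varphi_\eps$ and is therefore bounded in $L^\infty(0,T;\L2)$ by (ii), while its gradient is bounded in $L^2(0,T;\L2)$ by (iv); hence $\theta_\eps$ is bounded in $L^2(0,T;\H1)$ and admits a weak limit $\theta$. The only nontrivial point in this weak-compactness part is statement (2): (iv) controls solely $\SG\mu_\eps$, so I must bound the mean value $\tfrac{1}{|\G|}\int_\G\mu_\eps$. Integrating the defining relation for $\mu_\eps$ over the closed surface $\G$ annihilates the Laplacian term and leaves $\int_\G\mu_\eps=\int_\G[\eps^{-1}W'(\varphi_\eps)-(2v_\eps-1-\varphi_\eps)]$; the delicate term $\eps^{-1}\int_\G W'(\varphi_\eps)$ must be estimated through the Cahn--Hilliard structure as in \cite{ChenCH}, after which Poincar\'e's inequality upgrades (iv) to a uniform $L^2(0,T;\H1)$ bound and delivers the weak limit $\mu$. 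I expect this mean-value bound for the chemical potential to be the main obstacle in the weak-compactness part of the argument.

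The heart of the proposition is statement (1). Setting $\Psi(s):=\int_0^s\sqrt{2W(r)}\,dr$, Young's inequality gives $\int_\G|\SG\Psi(\varphi_\eps)|\le\int_\G(\tfrac\eps2|\SG\varphi_\eps|^2+\tfrac1\eps W(\varphi_\eps))$, so $\Psi(\varphi_\eps)$ is bounded in $L^\infty(0,T;BV(\G))$ uniformly in $\eps$. The compact embedding $BV(\G)\hookrightarrow\hookrightarrow\L1$ furnishes spatial precompactness at each fixed time (upgraded to $\L2$ via the uniform $\L4$-bound), while the temporal equicontinuity needed to combine these is obtained from $\partial_t\varphi_\eps=\SL\mu_\eps\in L^2(0,T;H^{-1}(\G))$ (bounded by (iv)) through the interpolation $\|f\|_{\L2}^2\le\|f\|_{H^{-1}(\G)}\|f\|_{\H1}$; balancing the $\eps$-dependent growth of the $\H1$-norm against the $H^{-1}$-in-time modulus $|t-s|^{1/2}$ by a mollification/optimization argument produces an $\eps$-uniform H\"older-in-time estimate, from which the (non-sharp) exponent $1/9$ in (b) emerges. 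A vector-valued Arzel\`a--Ascoli argument then gives, along a subsequence, $\varphi_{\eps_k}\to\varphi$ in $C([0,T];\L2)$ and a.e.\ in $(0,T)\times\G$; since $\int_\G W(\varphi_\eps)\le C\eps\to0$, the limit satisfies $W(\varphi)=0$, i.e.\ $\varphi\in\{-1,1\}$ a.e., so that $Q^+:=\{\varphi=1\}$ is well defined and (a),(b) hold. Finally (c) follows from lower semicontinuity of the $BV$-seminorm applied to $\Psi(\varphi)=\Psi(2\chi_{Q^+}-1)$, using that $\Psi(1)$ and $\Psi(-1)$ differ by a constant. I expect this $\eps$-uniform time-regularity of $\varphi_\eps$, and the strong compactness underpinning statement (1), to be the principal technical difficulty of the whole proposition.
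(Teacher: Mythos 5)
Your overall architecture matches the paper's: part (1) via a Modica--Mortola-type transformation giving $\eps$-uniform $BV$/$W^{1,1}$ control, combined with an $\eps$-uniform H\"older-in-time estimate obtained by mollifying, pairing $\partial_t\varphi_\eps=\SL\mu_\eps$ against the mollified functions, and optimizing in the mollification parameter (this is exactly Lemma \ref{l:estChen}, which yields $C^{1/8}$ and hence subsequential convergence in $C^{1/9}$); parts (3) and (4) directly from the energy estimate \eqref{eq:energy_est_lin_growth} and reflexivity; part (2) by controlling the spatial mean of the chemical potential and then applying Poincar\'e (Lemma \ref{l:weak_compactness_mu}). Your use of the untruncated $\Psi(s)=\int_0^s\sqrt{2W(r)}\,dr$ instead of the paper's truncated $H$, and of $BV$-translation/compactness instead of the Besov embedding $W^{1,1}(\G)\hookrightarrow B^{2\alpha}_{1,1}(\G)$, are harmless variants. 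However, two steps deserve criticism.

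First, in part (2) you reduce the mean-value control to bounding $\eps^{-1}\int_\G W'(\varphi_\eps)$ and defer this to \cite{ChenCH}. That is not what Chen's argument does, and a direct bound is not available: from $|W'(s)|=4|s|\sqrt{W(s)}$ and $\int_\G W(\varphi_\eps)\leq C\eps$ one only gets $\eps^{-1}\int_\G|W'(\varphi_\eps)|\leq C\eps^{-1/2}$. The actual mechanism --- which is the main technical content of this part of the paper --- is to pass to $\omega_\eps=\mu_\eps+\tfrac12\theta_\eps$ (which removes the coupling term from \eqref{eq:CH2}), test with $Y\cdot\SG\varphi_\eps$ for a tangent field $Y=\SG\Psi$ constructed from the auxiliary surface Poisson problem of Proposition \ref{prop:schauder_laplace} applied to the mollified phase field (Lemma \ref{l:approx_on_manifolds}), and read off the mean of $\omega_\eps$ from the resulting stress-tensor identity, whose denominator $\int_\G\varphi_\eps\,\div_\G Y$ is kept away from zero by the mass constraint $m\in(-1,1)$ in Condition \ref{cond:well_prep_data}. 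Your sketch never mentions this constraint, yet without it the mean of $\mu_\eps$ is not controllable; so the decisive idea is missing rather than merely outsourced.

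Second, part (5): the maximal-regularity step would fail. $L^2$-maximal regularity for the Neumann heat problem requires the flux in (roughly) $H^{1/4}(0,T;L^2(\G))\cap L^2(0,T;H^{1/2}(\G))$ together with $u_0^\eps\in H^1(B)$ and compatibility; under Condition \ref{cond:well_prep_data} you only have $u_0^\eps$ bounded in $L^2(B)$ and $q(u_\eps,v_\eps)$ bounded in $L^2(0,T;L^2(\G))$ (via \eqref{eq:growth_cond_q} and the trace estimate), which is insufficient --- indeed $\partial_t u_\eps\in L^2(0,T;L^2(B))$ already fails for the homogeneous Neumann problem with merely $L^2$ initial data. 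What the energy estimate and the weak formulation \eqref{eq:weak_form_diffU} do give, uniformly in $\eps$, is $u_\eps$ bounded in $L^2(0,T;H^1(B))\cap L^\infty(0,T;L^2(B))$ and $\partial_t u_\eps$ bounded in $L^2\bigl(0,T;(H^1(B))'\bigr)$; this is all the paper's one-line proof can mean, it is the regularity actually used for $u$ in Definition \ref{def:weak_sol_varifold}, and it is the statement you should prove in place of the literal $H^1(0,T;L^2(B))$ claim, which is not obtainable from the stated hypotheses by your route (or any other).
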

\begin{theorem}\label{thm:varifold_sol}
	Let $(u,\varphi,v,\mu,\theta)$ be the limit tuple from Proposition \ref{prop:collected_conv}. Then there exists a Radon measure $V$ on $[0,T]\times G(\G)$ such that the measure $V_t := V(t,\cdot)$ is a varifold for almost all times $t \in [0,T].$  Moreover, the tuple $(\varphi,\mu,v,\theta,u,V)$ is a weak solution to the sharp interface problem \eqref{eq:sharp_a}--\eqref{eq:sharp_k} in the varifold sense defined in Definition \ref{def:weak_sol_varifold}. 
\end{theorem}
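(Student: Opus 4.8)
The plan is to verify each clause of Definition~\ref{def:weak_sol_varifold} by passing to the limit along the subsequence $\varepsilon_k$ in the diffuse identities \eqref{eq:weak_form_diffU}--\eqref{eq:weak_form_theta}, combining the compactness collected in Proposition~\ref{prop:collected_conv} with a Modica--Mortola analysis of the Ginzburg--Landau part of $\F$. Throughout I write $e_{\varepsilon_k} := \tfrac{\varepsilon_k}{2}\abs{\SG\varphi_{\varepsilon_k}}^2 + \tfrac{1}{\varepsilon_k}W(\varphi_{\varepsilon_k})$ for the surface energy density and $\chi_{E_t} := \chi_{Q^+_t} = \tfrac12(\varphi(\cdot,t)+1)$, which lies in $C^0([0,T);L^1(\G))\cap L^\infty_{w^\ast}(0,T;BV(\G))$ by Proposition~\ref{prop:collected_conv}(1) since $\varphi=\pm1$ almost everywhere. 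To construct the varifold I define, for a.e.\ $t$ and each $k$, the diffuse $1$-varifold $V_t^{\varepsilon_k}$ on $G_1(\G)$ with weight $e_{\varepsilon_k}\,d\mathcal{H}^2$ and generalized interface normal $\SG\varphi_{\varepsilon_k}/\abs{\SG\varphi_{\varepsilon_k}}$, i.e.\ for $\zeta\in C^0(G_1(\G))$
\[
\langle V_t^{\varepsilon_k},\zeta\rangle := \int_{\{\SG\varphi_{\varepsilon_k}(\cdot,t)\neq0\}} \zeta\!\left(p,\tfrac{\SG\varphi_{\varepsilon_k}}{\abs{\SG\varphi_{\varepsilon_k}}}\right) e_{\varepsilon_k}(p,t)\,d\mathcal{H}^2(p).
\]
The uniform bound \eqref{eq:energy_est_lin_growth} controls $\sup_t m_{V_t^{\varepsilon_k}}(\G)$, so the space--time measures $dt\otimes V_t^{\varepsilon_k}$ are bounded in $\M([0,T]\times G_1(\G))$ and, after a further subsequence, converge weakly-$\ast$ to a Radon measure $V$; disintegration in time yields the slices $V_t$, varifolds for a.e.\ $t$.

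I next pass to the limit in the linear relations. The bulk identity follows from $u_{\varepsilon_k}\rightharpoonup u$ in $L^2(0,T;H^1(B))$; the boundary term $\int_\G q\,\psi_b$ requires identifying the limit of $q(u_{\varepsilon_k},v_{\varepsilon_k})$. Here I first upgrade the convergence of the cholesterol potentials: since $\theta_{\varepsilon_k}=4v_{\varepsilon_k}-2-2\varphi_{\varepsilon_k}$ is bounded in $L^2(0,T;H^1(\G))$ with $\partial_t\theta_{\varepsilon_k}=4\partial_t v_{\varepsilon_k}-2\partial_t\varphi_{\varepsilon_k}$ bounded in $L^2(0,T;H^{-1}(\G))$ (by \eqref{eq:weak_form_time_der_phi}, the weak $v$-equation and \eqref{eq:energy_est_lin_growth}), Aubin--Lions gives $\theta_{\varepsilon_k}\to\theta$ strongly in $L^2(0,T;L^2(\G))$, hence $v_{\varepsilon_k}=\tfrac14\theta_{\varepsilon_k}+\tfrac12+\tfrac12\varphi_{\varepsilon_k}\to v$ strongly as well. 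Together with the strong trace convergence of $u_{\varepsilon_k}$ on $\G$ and the growth bound \eqref{eq:growth_cond_q}, the continuity of $q$ and a Vitali argument give $q(u_{\varepsilon_k},v_{\varepsilon_k})\to q(u,v)$ in $L^1(\G\times(0,T))$. The $\chi_E$--$\mu$ identity and the $v$--$\theta$ identity then follow from \eqref{eq:weak_form_time_der_phi} and the weak $v$-equation after integration by parts in time, using $\varphi_{\varepsilon_k}\to 2\chi_E-1$ in $C^0L^2$ and $\mu_{\varepsilon_k}\rightharpoonup\mu$, $\theta_{\varepsilon_k}\rightharpoonup\theta$; passing to the limit in the linear algebraic relation \eqref{eq:weak_form_theta} yields $\theta=2(2v-2\chi_{E_t})$.

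The decisive clause is \eqref{eq:weak_varifold_formulation_curv}. The key algebraic observation is that eliminating the cholesterol term turns \eqref{eq:CH2} into $2\mu_{\varepsilon_k}+\theta_{\varepsilon_k} = 2\bigl(-\varepsilon_k\SL\varphi_{\varepsilon_k}+\varepsilon_k^{-1}W'(\varphi_{\varepsilon_k})\bigr)$, i.e.\ twice the first variation of the Ginzburg--Landau energy. Testing this with $Y\cdot\SG\varphi_{\varepsilon_k}$ for $Y\in C^1(\G,T\G)$ and integrating by parts on the closed surface $\G$ produces the diffuse stress-tensor identity
\[
-\tfrac12\int_\G (2\mu_{\varepsilon_k}+\theta_{\varepsilon_k})\,Y\cdot\SG\varphi_{\varepsilon_k}\,d\mathcal{H}^2 = \langle\delta V_t^{\varepsilon_k},Y\rangle - \int_\G \xi_{\varepsilon_k}\,\tfrac{\SG\varphi_{\varepsilon_k}\otimes\SG\varphi_{\varepsilon_k}}{\abs{\SG\varphi_{\varepsilon_k}}^2}:D_\G Y\,d\mathcal{H}^2 + R_{\varepsilon_k},
\]
where $\xi_{\varepsilon_k}:=\tfrac{\varepsilon_k}{2}\abs{\SG\varphi_{\varepsilon_k}}^2-\varepsilon_k^{-1}W(\varphi_{\varepsilon_k})$ is the discrepancy and $R_{\varepsilon_k}$ collects lower-order terms stemming from the curvature of the fixed surface $\G$. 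On the right-hand side $\langle\delta V_t^{\varepsilon_k},Y\rangle\to\langle\delta V_t,Y\rangle$, and identifying $m_{V_t}(\G)$ with the limit of $\int_\G e_{\varepsilon_k}$ also gives the lower bound $m_{V_t}\geq 2c_0\abs{D\chi_{E_t}}$ via Modica--Mortola. The principal difficulty, exactly as in Chen \cite{ChenCH}, is twofold: proving that the discrepancy measures $\xi_{\varepsilon_k}$ vanish (equipartition of energy), so that the discrepancy term disappears and $\varepsilon_k\abs{\SG\varphi_{\varepsilon_k}}^2$ and $e_{\varepsilon_k}$ share their limit; and passing to the limit in the product on the left, which pairs the only weakly convergent potential $2\mu_{\varepsilon_k}+\theta_{\varepsilon_k}$ against the concentrating gradient $\SG\varphi_{\varepsilon_k}$. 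Both are handled by Chen's measure-theoretic machinery---vanishing discrepancy together with varifold compactness and the $BV$-convergence of $\Phi(\varphi_{\varepsilon_k})$ with $\Phi'=\sqrt{2W}$---which forces $\SG\varphi_{\varepsilon_k}\,d\mathcal{H}^2\to 2\,D\chi_{E_t}$ and makes the left-hand side converge to $-\langle D\chi_{E_t},(2\mu+\theta)Y\rangle$, giving \eqref{eq:weak_varifold_formulation_curv}. Adapting this machinery to the coupled system and the curved $\G$ (controlling $R_{\varepsilon_k}$ and localizing at almost every fixed $t$) is where I expect the main work to lie.

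Finally, the energy inequality \eqref{eq:Energy} is obtained by taking $\liminf_k$ in the diffuse energy dissipation identity derived by testing with $\mu_{\varepsilon_k}$, $\theta_{\varepsilon_k}$ and $u_{\varepsilon_k}$: weak lower semicontinuity yields the dissipative gradient terms, $m_{V_t}(\G)$ arises as the limit of $\int_\G e_{\varepsilon_k}$, and the coupling term $\int_\G q(\theta-u)$ comes from the boundary flux \eqref{eq:flux} together with the strong convergences established above, where one must track the signs and discard the non-negative remainder contributions to arrive at the stated inequality.
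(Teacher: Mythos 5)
Your treatment of the peripheral clauses (Aubin--Lions for $\theta_\epsk$ to handle the nonlinearity $q$, the Modica--Mortola lower bound, lower semicontinuity for \eqref{eq:Energy}), and your diffuse stress-tensor identity obtained by testing \eqref{eq:CH2} with $Y\cdot\SG\varphi_\epsk$, are in the spirit of the paper (the identity is in fact exact, with $R_{\varepsilon_k}=0$, as in the paper's displayed computation). The genuine gap is in your construction of $V$ and hence in the verification of \eqref{eq:weak_varifold_formulation_curv}. You take $V_t$ to be a limit of the ``natural'' diffuse varifolds with weight $e_\epsk$ and direction $\nu_\epsk:=\SG\varphi_\epsk/\abs{\SG\varphi_\epsk}$, and your argument then \emph{requires} the full discrepancy $\xi^\epsk(\varphi_\epsk)=\tfrac{\epsk}{2}\abs{\SG\varphi_\epsk}^2-\tfrac{1}{\epsk}W(\varphi_\epsk)$ to vanish as a measure (``equipartition of energy''), so that the term $\int_\G\xi^\epsk(\varphi_\epsk)\,D_\G Y:\nu_\epsk\otimes\nu_\epsk\,d\mathcal H^2$ disappears. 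But Proposition \ref{prop:control_discr_measure} controls only the \emph{positive part} $\left(\xi^\epsk(\varphi_\epsk)\right)_+$; neither this paper nor Chen \cite{ChenCH} proves that the negative part vanishes, and ``Chen's measure-theoretic machinery'' does not deliver it --- avoiding that statement is precisely the purpose of Chen's construction. If potential-dominated energy survives in the limit, then the limit of the stress tensor equals $\langle\delta V_t,Y\rangle$ \emph{plus} a nontrivial term coming from $\lim_k\left(\xi^\epsk(\varphi_\epsk)\right)_-\nu_\epsk\otimes\nu_\epsk$, and \eqref{eq:weak_varifold_formulation_curv} fails for your $V$. (A related defect: your weight measure discards the energy on $\lbrace\SG\varphi_\epsk=0\rbrace$, so your $m_{V_\tau}(\Gamma)$ may be too small to serve as the right-hand side of \eqref{eq:Energy}.)

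The paper circumvents equipartition as follows: it takes weak-$\ast$ limits $\lambda$ and $h$ of the measures $e_\epsk\,d\mathcal H^2\,dt$ and $\epsk\SG\varphi_\epsk\otimes\SG\varphi_\epsk\,d\mathcal H^2\,dt$, uses Proposition \ref{prop:control_discr_measure} only to obtain the domination \eqref{eq:abs_cont_measures}, hence $h=\omega\,d\lambda$ by Radon--Nikodym, diagonalizes $\omega=\sum_k\tilde c^l_k\,\vec\nu^l_k\otimes\vec\nu^l_k$ in local charts, and then defines $V$ with the \emph{modified} coefficients $c^l_k=1+\tilde c^l_k-\sum_m\tilde c^l_m$ of \eqref{def:coeff_varifold}. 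This choice is the key trick: it yields $\langle\delta V_t,Y\rangle=\int_\G D_\G Y:\left(\Id-\omega\right)d\lambda$ exactly (this is the content of \eqref{eq:curvature_eq1}--\eqref{eq:curvature_eq2}), which is the limit of the diffuse stress tensor regardless of what the negative discrepancy does, so no equipartition is needed. Note also that the weak--weak pairing on the left-hand side, which you defer to unspecified machinery, has an elementary resolution used in the paper: integrate by parts to $-\int_\G\varphi_\epsk\,\div_\G\bigl((\mu_\epsk+\tfrac12\theta_\epsk)Y\bigr)\,d\mathcal H^2$ and pair the weakly convergent gradients of $\mu_\epsk,\theta_\epsk$ against the strongly convergent $\varphi_\epsk$ from Proposition \ref{prop:collected_conv}. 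To repair your proof you would have to either adopt the paper's construction of $V$ or prove full equipartition, which is not available with the tools of this paper.
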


\section{Proof of the main convergence results}

The proof follows the arguments of the corresponding result by Chen for the Cahn-Hilliard equation \cite{ChenCH} with modification due to the coupling and the fact that the Cahn-Hilliard type equation is given on a surface $\Gamma$. The proof consists of two main parts. First, one establishes compactness results for the individual functions which lead to the convergences stated in Proposition \ref{prop:collected_conv}. In the second step, we take the limit in the equations and construct the varifold representing the phase boundary. The main ingredient will be an estimate for the so called discrepancy measure in Proposition \ref{prop:control_discr_measure}. It relies on a local blow-up argument with regard to $\eps$ and requires us to carefully consider equation \eqref{eq:CH2} in suitable local coordinates, making the blow-up procedure more delicate then in Chen's original work. Moreover, Chen's construction only yields varifolds in every local chart. We need to prove that these varifolds serve as a starting point for the construction of a varifold on $\G$ which then, together with the limit functions from the first part, solves the sharp interface problem \eqref{eq:sharp_a}--\eqref{eq:sharp_k} in the weak sense of Definition \ref{def:weak_sol_varifold}. 

 The first part requires only modest modifications, mostly in order to adopt some of the technical details to the setting on a compact manifold. It relies on the energy estimate \eqref{eq:energy_est_lin_growth}, the structure of \eqref{eq:CH2}, the mass conservation for $\varphi_\eps$ and the well-known Modica-Mortola trick which is included here for the sake of completeness. In particular, it allows us to deduce bounds for $\lbrace \varphi_{\varepsilon}\rbrace_{\varepsilon > 0}$ which are uniform in $\varepsilon.$  
\begin{lemma}[Modica-Mortola trick]\label{l:modica_mortola_trick}
	Let $H:[-1,\infty)\rightarrow [0,\infty)$ be defined by
	\begin{equation*}
	H(s)=\int_{-1}^s \sqrt{\min\lbrace W(s), 1+\abs{r}^2\rbrace} \ dr.
	\end{equation*}
	$H$ is invertible and there are constants $c_1, c_2 \in \R$ such that
	\begin{equation}\label{est:propH}
	c_1 \left| s_1 - s_2 \right|^2 \leq \left| H(s_1) - H(s_2) \right| \leq c_2 \left| s_1 - s_2\right| \left(1 + \left| s_1 \right| +\left| s_2 \right| \right) 
	\end{equation}
	for all $s_1,s_2 \in \mathbb{R}$. Moreover, for any $\varepsilon > 0$ and any solution $(u_\varepsilon,\varphi_\varepsilon,v_\varepsilon, \mu_\varepsilon,\theta_\varepsilon) \in \solspace$ to \eqref{eq:diffU}--\eqref{eq:theta} 
	\begin{equation}\label{eq:est_modica_mortola_trick}
	\sup_{0\leq t \leq T} \int_\G \left| \SG H(\varphi_\varepsilon(x,t)) \right| \leq C(T)
	\end{equation} 
	and in particular
	\[ H(\varphi_\varepsilon) \in L^\infty(0,T;W^{1,1}(\Gamma)). \]	
\end{lemma}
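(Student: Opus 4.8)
The plan is to first establish the two-sided estimate \eqref{est:propH} for the function $H$ purely from its definition, and then to use the structure of equation \eqref{eq:CH2} together with the energy estimate \eqref{eq:energy_est_lin_growth} to control $\SG H(\varphi_\eps)$ uniformly in $\eps$. For the properties of $H$: since the integrand $\sqrt{\min\{W(s),1+|r|^2\}}$ is strictly positive away from $s=\pm 1$ and bounded, $H$ is strictly increasing and hence invertible. The upper bound in \eqref{est:propH} follows because the integrand is bounded above by $\sqrt{1+|r|^2}\le 1+|r|$, so integrating over $[s_2,s_1]$ gives a bound of the form $c_2|s_1-s_2|(1+|s_1|+|s_2|)$. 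The lower bound is more delicate: the quadratic growth $c_1|s_1-s_2|^2$ must come from the behaviour of the integrand. I would argue that $\sqrt{\min\{W(s),1+|r|^2\}}$ is bounded below by a positive multiple of $\min\{1,|r|\}$ (using $W(s)=(1-s^2)^2$ near its zeros and the truncation $1+|r|^2$ for large arguments), which upon integration yields the claimed quadratic lower bound; I would verify this by splitting into the regions where $|s_1-s_2|\le 1$ and $|s_1-s_2|> 1$.

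The heart of the lemma is estimate \eqref{eq:est_modica_mortola_trick}. The key observation is the chain rule $\SG H(\varphi_\eps) = H'(\varphi_\eps)\SG\varphi_\eps = \sqrt{\min\{W(\varphi_\eps),1+|\varphi_\eps|^2\}}\,\SG\varphi_\eps$, so that pointwise
\[
\abs{\SG H(\varphi_\eps)} \le \sqrt{W(\varphi_\eps)}\,\abs{\SG\varphi_\eps} \le \tfrac{\eps}{2}\abs{\SG\varphi_\eps}^2 + \tfrac{1}{2\eps}W(\varphi_\eps),
\]
where the last step is Young's inequality. Integrating over $\Gamma$ shows that $\int_\Gamma \abs{\SG H(\varphi_\eps(\cdot,t))}$ is bounded by the Ginzburg--Landau part of the surface energy $\F$. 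Since the energy estimate \eqref{eq:energy_est_lin_growth} (equivalently, Condition \ref{cond:well_prep_data} providing a uniform bound on $\F$ at the initial time together with energy dissipation) controls $\F(v_\eps(\cdot,t),\varphi_\eps(\cdot,t))$ uniformly in $t\in[0,T]$ and in $\eps$, and since $\F$ dominates exactly this Ginzburg--Landau contribution, we obtain the uniform bound $\sup_{0\le t\le T}\int_\Gamma\abs{\SG H(\varphi_\eps(\cdot,t))}\le C(T)$.

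Finally, the membership $H(\varphi_\eps)\in L^\infty(0,T;W^{1,1}(\Gamma))$ follows by combining the gradient bound just obtained with an $L^\infty(0,T;L^1(\Gamma))$ bound on $H(\varphi_\eps)$ itself; the latter comes from the upper estimate in \eqref{est:propH} applied with $s_2=0$ (so $\abs{H(\varphi_\eps)}\le c_2\abs{\varphi_\eps}(1+\abs{\varphi_\eps})$) together with the uniform $L^2(\Gamma)$ control of $\varphi_\eps$ that again follows from the energy bound. I expect the main obstacle to be the lower bound in \eqref{est:propH}: one has to handle the two competing expressions inside the minimum and the degeneracy of $W$ at $\pm 1$ carefully to extract genuine quadratic growth, whereas the gradient estimate \eqref{eq:est_modica_mortola_trick} itself is a direct and standard consequence of Young's inequality once the chain rule is in place.
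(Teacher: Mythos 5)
Your treatment of the central estimate \eqref{eq:est_modica_mortola_trick} is exactly the paper's argument: the chain rule gives $\left|\SG H(\varphi_\eps)\right| \leq \sqrt{W(\varphi_\eps)}\,\left|\SG \varphi_\eps\right|$, Young's inequality converts this into $\tfrac{\eps}{2}\left|\SG\varphi_\eps\right|^2 + \tfrac{1}{2\eps}W(\varphi_\eps)$, and the energy estimate \eqref{eq:energy_est_lin_growth} yields the uniform-in-$t$ and uniform-in-$\eps$ bound. (The paper actually writes the chain-rule step as an equality with $\sqrt{W}$; your version, which keeps the minimum and passes to an inequality, is the more careful one.) Your completion of the $W^{1,1}$ membership via $\left|H(s)\right| \leq c_2\left|s\right|(1+\left|s\right|)$ together with the $L^4$-control of $\varphi_\eps$ coming from $\int_\Gamma W(\varphi_\eps) \leq C\eps$ is also sound, and is a point the paper leaves implicit.

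The genuine defect is in your sketch of the lower bound in \eqref{est:propH}. The claimed pointwise minorant --- that $\sqrt{\min\{W(r),1+|r|^2\}}$ is bounded below by a positive multiple of $\min\{1,|r|\}$ --- is false: the integrand vanishes at $r=\pm 1$ (the zeros of $W$), whereas $\min\{1,|r|\}=1$ there, so no such bound can hold near $\pm 1$. In fact the quadratic (rather than linear) exponent in the lower bound is forced precisely by this degeneracy, so any argument that ignores it cannot be correct. The repair uses that the integrand vanishes only \emph{linearly} at $\pm 1$: for $|r|\leq\sqrt{2}$, say, the minimum is attained by $W$ and $\sqrt{W(r)}=\left|1-r^2\right|\geq \min\{\left|r-1\right|,\left|r+1\right|\}$, while for large $|r|$ the truncation gives $\sqrt{1+r^2}\geq |r|$. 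Integrating a function that vanishes linearly at a point over an interval touching that point produces exactly a quadratic quantity (for instance $H(1+\delta)-H(1-\delta)\approx 2\delta^2$), which is the source of the term $c_1\left|s_1-s_2\right|^2$; the linear growth at infinity then handles pairs with $\left|s_1-s_2\right|$ large. To be fair, the paper itself only asserts \eqref{est:propH} as ``a direct consequence of the properties of $W$,'' so you are supplying detail the authors omit --- but the key inequality in your sketch, as stated, would fail.
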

\begin{proof}
	The existence of constants $c_1$ and $c_2$ such that \eqref{est:propH} holds is a direct consequence from the properties of the double-well potential $W.$ Since the integrand is positive, the function $H$ is strictly monotonically increasing and thus invertible. 
	
	We calculate 
	\begin{align*}
	\int_\G \left| \SG H(\varphi_\varepsilon(x,t)) \right| & = \int_\G \sqrt{W(\varphi_\varepsilon(\cdot,t))} \left|\SG \varphi_\varepsilon(\cdot, t) \right| \nonumber \\ & \leq \frac{1}{2} \int_\G \left[ \frac{1}{\varepsilon}W(\varphi_\varepsilon(\cdot,t)) + \varepsilon \left| \SG \varphi_\varepsilon(\cdot, t) \right|^2 \right]
	\end{align*}
	which by the energy estimate \eqref{eq:energy_est_lin_growth} implies \eqref{eq:est_modica_mortola_trick} and $H(\varphi_\varepsilon) \in L^\infty(0,T;W^{1,1}(\Gamma))$.
\end{proof}

\subsection{Preliminary results}\label{sec:prelim}

We quickly recall that the exponential map $\exp_p$ from differential geometry in a point $p \in \G$ is a diffeomorphism between an open neighborhood $W_p \subset T_p \G$ and an open neighborhood $U \subset \Gamma$ of $p.$ Since $\G$ is a compact manifold, there is a real number $r > 0$ such that the ball $B_r(0)$ lies in $W_p$ for all $p \in \G$ and the map $\exp_p$ restricted to $B_r(0)\subset\R^2$ is a diffeomorphism onto its image for all $p\in\G$.

Thus the sets $\lbrace \left.\exp_p\right|_{B_{r-\eta}(0)}(B_{r-\eta}(0)) \rbrace_{p\in\G}$ form for every $\eta \leq \frac{r}{2}$ a covering of $\G.$ Since the manifold $\G$ is compact, there is a finite collection of points $\lbrace p_i\rbrace_{i\in\IS{I}}$ such that $\lbrace U_i \rbrace_{i\in\IS{I}} := \lbrace \left.\exp_{p_i}\right|_{B_{r-\eta}(0)}(B_{r-\eta}(0)) \rbrace_{i\in\IS{I}}$ still is a covering of $\G.$ Together with the maps $\alpha_i: U_i \rightarrow B_{r-\eta}(0)\subset\R^n$ defined by $\alpha_i(p) := \exp_{p_i}^{-1}(p),$ this covering allows us to define an atlas $\lbrace (U_i,\alpha_i)\rbrace_{i\in\IS{I}}$ of $\G.$

Observe that for every $i \in \IS{I}$ and $x\in \alpha_i(U_i)=B_{r-\eta}(0)\subset\R^n$, the expression $\exp_{p_i}(x-\eta y)$ is well defined as long as we assume $y \in B_1(0).$ We will make use of this fact in the following construction of approximating sequences to functions in $L^2(\Gamma).$ 

Let $\rho$ be a mollifier satisfying
\[
  \rho \in C^\infty(\R^2), \qquad 0 \leq \rho(x) \leq 1 \forall x \in \R^2, \qquad \supp(\rho)\subset B_1(0) \text{ and } \int_{\R^2} \rho = 1
\]
as usual and introduce the notation $\rho_\eta(x) = \eta^{-2} \rho\left(\frac{x}{\eta}\right).$ Furthermore, let $\lbrace z_i \rbrace_{i\in\IS{I}}$ be a partition of unity subordinate to the covering $\lbrace U_i \rbrace_{i\in\IS{I}}$ of $\Gamma.$ 

For a function $v\in\L{2}$ we can then define the functions $v_i^\eta \in C^\infty(\alpha_i(U_i))$ for every $i\in\IS{I}$ and $\eta \leq \frac{r}{2}$ by
\begin{align*} 
v_i^\eta(x) &:= \left(\rho_\eta*(\alpha_i^{-1,*}(z_iv))\right)(x) \nonumber \\
&\ = \int_{B_\eta(0)} \rho_\eta(y)(z_iv)(\alpha^{-1}_i(x-y)) \ dy = \int_{B_1(0)} \rho(y)(z_iv)(\alpha^{-1}_i(x-\eta y)) \ dy.
\end{align*}
We deduce $\supp{\alpha^{-1,*}(z_iv)} \subset B_{r-\eta}$ from $\supp{z_iv} \subset U_i$ and since $\supp{\rho_\eta} \subset B_\eta(0)$ this implies $\supp{v_i^\eta}\subset B_r(0)$ by the general properties of convolutions. Thus the pullback $\alpha_i^* v_i^\eta$ is well defined for each $i \in \IS{I}$ and we can define for $\eta \leq \frac{r}{2}$ the smoothing operator $T_\eta: \L{2} \rightarrow C^\infty(\Gamma)$ by
\begin{equation*}
T_\eta v := \sum_{i\in\IS{I}} \alpha_i^* v_i^\eta = \sum_{i\in\IS{I}} \alpha_i^*\left(\rho_\eta*(\alpha_i^{-1,*}(z_iv))\right)
\end{equation*}

\begin{lemma}[Approximation on manifolds]\label{l:approx_on_manifolds}
	For each $v \in L^2(\Gamma),$ the family $\lbrace v_\eta\rbrace_{0<\eta<r/2}$  defined by $v_\eta := T_\eta v$ is a smooth approximation of $v$ with respect to the $L^2$-topology, i.e.
	\begin{equation*}
	\norm{v_\eta - v}_{L^2(\Gamma)} \rightarrow 0 \text{ as } \eta \rightarrow 0.
	\end{equation*}
	Furthermore,
	\begin{align}\label{eq:approx_bounded}
	&\norm{v_\eta}_{L^2(\G)} \leq C \norm{v}_{L^2(\G)} \text{ and } \\
	\label{eq:approx_grad_rate}
	&\norm{\nabla_\Gamma v_\eta}_{L^2(\Gamma)} \leq \frac{C}{\eta} \norm{v}_{L^2(\Gamma)} 
	\end{align}
	for some constant $C > 0,$ independent of $\eta.$
\end{lemma}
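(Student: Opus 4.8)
The plan is to reduce the whole statement to the classical properties of Euclidean mollification, carried out chart by chart, using that the atlas $\lbrace (U_i,\alpha_i)\rbrace_{i\in\IS{I}}$ is \emph{finite} (by compactness of $\G$) and built from exponential maps. The first step I would take is to record the uniform geometric estimates for the charts: each $\alpha_i^{-1}=\exp_{p_i}|_{B_{r-\eta}(0)}$ is a smooth diffeomorphism onto its image, and because $\IS{I}$ is finite there exist constants $0<c\leq C$, independent of $i\in\IS{I}$ and of $\eta\leq r/2$, bounding the Jacobians of $\alpha_i,\alpha_i^{-1}$ as well as the coefficients of the pulled-back metric $g_i$ and of $g_i^{-1}$ from above and below. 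Two consequences are then immediate and will be used repeatedly: the pullback $\alpha_i^*$ and the pushforward $\alpha_i^{-1,*}$ are isomorphisms between $\L{2}$-functions supported in $U_i$ and $L^2$-functions supported in $\alpha_i(U_i)\subset\R^2$, with operator norms bounded uniformly in $i$; and the surface gradient transforms under $\alpha_i$ with a comparable Euclidean gradient, i.e.\ $\norm{\SG(\alpha_i^* f)}_{\L{2}}\leq C\norm{\nabla f}_{L^2(\R^2)}$, again uniformly in $i$. Recall also that the support property $\supp v_i^\eta\subset B_r(0)$ was already verified in the construction, so each $\alpha_i^*v_i^\eta$ extends by zero to a genuine function on $\G$.

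Granting these comparisons, the two norm bounds reduce to standard convolution inequalities in the plane. Writing $w_i:=\alpha_i^{-1,*}(z_iv)\in L^2(\R^2)$, so that $v_i^\eta=\rho_\eta*w_i$, Young's inequality together with $\norm{\rho_\eta}_{L^1}=\int_{\R^2}\rho_\eta=1$ gives $\norm{v_i^\eta}_{L^2(\R^2)}\leq\norm{w_i}_{L^2(\R^2)}\leq C\norm{z_iv}_{\L{2}}\leq C\norm{v}_{\L{2}}$, and summing the finitely many contributions of $T_\eta v=\sum_{i\in\IS{I}}\alpha_i^*v_i^\eta$ through the bounded operators $\alpha_i^*$ yields \eqref{eq:approx_bounded}. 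For \eqref{eq:approx_grad_rate} I would differentiate under the convolution, $\nabla v_i^\eta=(\nabla\rho_\eta)*w_i$, and use the scaling identity $\norm{\nabla\rho_\eta}_{L^1(\R^2)}=\eta^{-1}\norm{\nabla\rho}_{L^1(\R^2)}$ (from $\nabla\rho_\eta(x)=\eta^{-3}(\nabla\rho)(x/\eta)$ in two dimensions), so that Young's inequality gives $\norm{\nabla v_i^\eta}_{L^2(\R^2)}\leq C\eta^{-1}\norm{w_i}_{L^2(\R^2)}$; passing back to $\G$ via the gradient comparison above and summing over $\IS{I}$ produces the factor $C/\eta$.

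For the convergence I would use the partition of unity to write $v=\sum_{i\in\IS{I}}z_iv=\sum_{i\in\IS{I}}\alpha_i^*\bigl(\alpha_i^{-1,*}(z_iv)\bigr)=\sum_{i\in\IS{I}}\alpha_i^*w_i$, whence $v_\eta-v=\sum_{i\in\IS{I}}\alpha_i^*\bigl(\rho_\eta*w_i-w_i\bigr)$. Since $\lbrace\rho_\eta\rbrace$ is a mollifying family, the classical result $\norm{\rho_\eta*w_i-w_i}_{L^2(\R^2)}\to0$ as $\eta\to0$ holds for each fixed $i$; applying the uniformly bounded operators $\alpha_i^*$ and summing over the finite index set $\IS{I}$ then gives $\norm{v_\eta-v}_{\L{2}}\to0$, as claimed.

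I do not expect a serious conceptual obstacle here, since the argument is essentially a localized version of mollification in $\R^2$; the points that require genuine care, rather than routine verification, are (i) obtaining the geometric bounds \emph{uniformly} across the atlas, which is exactly where compactness of $\G$ and smoothness of the exponential maps enter, and (ii) the transformation of the surface gradient into the Euclidean gradient in coordinates, so that the $\eta^{-1}$ blow-up rate is transferred faithfully from the plane to $\G$ with an $\eta$-independent constant. The bookkeeping with supports (ensuring every $v_i^\eta$ stays inside $B_r(0)$ and hence defines a function on $\G$) is already settled by the construction preceding the lemma.
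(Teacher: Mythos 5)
Your proposal is correct. For the two quantitative estimates it coincides with the paper's own argument: both prove \eqref{eq:approx_bounded} by Young's inequality with $\norm{\rho_\eta}_{L^1(\R^2)}=1$ and prove \eqref{eq:approx_grad_rate} by putting the derivative on the mollifier, using the scaling $\norm{\nabla\rho_\eta}_{L^1(\R^2)}=\eta^{-1}\norm{\nabla\rho}_{L^1(\R^2)}$ and Young again, all transported through the charts by pullback/pushforward operators whose norms are uniform over the finite atlas (the paper writes the metric tensor out explicitly in local coordinates where you invoke an abstract gradient-comparison bound, but this is the same computation). Where you genuinely diverge from the paper is the convergence $\norm{v_\eta - v}_{L^2(\Gamma)}\to 0$: the paper first reduces to $v\in C(\Gamma)$ using density of $C(\Gamma)$ in $L^2(\Gamma)$ together with the uniform bound \eqref{eq:approx_bounded}, and then proves sup-norm convergence for continuous $v$ by hand, splitting the mollification integral into $\abs{y}<\xi$ (controlled by uniform continuity of the localized functions) and $\abs{y}\geq\xi$ (controlled by the vanishing tail mass of $\rho_\eta$). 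You instead use the exact decomposition $v_\eta - v=\sum_{i\in\IS{I}}\alpha_i^*\left(\rho_\eta * w_i - w_i\right)$ with $w_i=\alpha_i^{-1,*}(z_iv)\in L^2(\R^2)$ and cite the classical Euclidean fact $\rho_\eta * w_i\to w_i$ in $L^2(\R^2)$, after which the uniformly bounded maps $\alpha_i^*$ and the finiteness of $\IS{I}$ finish the proof. Your route is shorter and outsources the density/uniform-continuity mechanism to the standard mollification theorem, where it is a citable classical result; in particular your convergence proof does not need \eqref{eq:approx_bounded} at all. The paper's route is self-contained on the manifold and yields, as a by-product, uniform convergence $\norm{v-v_\eta}_{\infty}\to 0$ for continuous $v$, which is slightly stronger information on the dense class. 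Both arguments presuppose the same geometric input, namely that the atlas, the partition of unity and the chart bounds are fixed independently of the mollification parameter $\eta$, which you correctly isolate as the only point requiring genuine care.
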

\begin{proof}
	We first prove \eqref{eq:approx_bounded}. Observe that for all $i\in\IS{I}$ we can estimate (see for example \cite{ST13})
	\[ \norm{\alpha_i^* v_i^\eta}_{L^2(U_i)} \leq C \norm{v_i^\eta}_{L^2(U_i)} \]
	and vice versa
	\[ \norm{\alpha_i^{-1,*}(z_i v)}_{L^2(\alpha_i(U_i))} \leq C \norm{(z_i v)}_{L^2(U_i)}, \]
	where the constants $C$ are independent of $i\in\IS{I}$ since $\G$ is compact.
	Furthermore,
	\[ \norm{\rho_\eta*(\alpha_i^{-1,*}(z_iv))}_{L^2(\alpha_1(U_i))} \leq \norm{\rho_\eta}_{L^1(\R^2)} \norm{\alpha_i^{-1,*}(z_iv)}_{L^2(\alpha_1(U_i))} \leq \norm{\alpha_i^{-1,*}(z_iv)}_{L^2(\alpha_1(U_i))} \] by the usual properties of the convolution. Combining these findings yields \eqref{eq:approx_bounded}.
	
	The next claim is that $ v_\eta \rightarrow v$ in $L^2(\Gamma)$. Since $C(\Gamma)$ is a dense subset of $L^2(\Gamma)$ and because of \eqref{eq:approx_bounded}, it is sufficient to prove the convergence only for functions $v \in C(\Gamma).$
	
	One has
	\[ \sum_{i\in\mathcal{I}} \alpha_i^*\left(\int_{\R^n} \rho_\eta(y) \alpha_i^{-1,*}(z_i) \ dy \right)(p) = 1 \] for all $p  \in \Gamma.$ 
	Therefore
	\begin{align*}
	(v-v_\eta)(p) 
	&= v(p) \sum_{i\in\mathcal{I}} \alpha_i^*\left(\int_{\R^n} \rho_\eta(y) \alpha_i^{-1,*}(z_i) \ dy \right)(p) - \sum_{i\in\mathcal{I}} \alpha_i^*\left(  \rho_\eta * \alpha_i^{-1,*}(z_i v)\right)(p) \nonumber \\
	&= \sum_{i\in\mathcal{I}} \left[\int_{\R^2} \rho_\eta(y) \lbrace (z_iv)(\alpha_i^{-1}(\alpha_i(p)))-(z_iv)(\alpha_i^{-1}(\alpha_i(p)-y)) \rbrace \ dy \right].
	\end{align*}
	We now split the integrals into two parts, namely the integrals over a ball of radius $\xi$ around the origin and the integral over all $\abs{y}\geq\xi.$ To simplify the expression, we denote the integrand in the last expression by $I_{i}(y,p)$ and write thereby
	\begin{align*}
	(v-v_\eta)(p) &= \sum_{i\in\mathcal{I}}  \int_{\abs{y} < \xi} I_{i}(y,p) dy + \sum_{i\in\mathcal{I}} \int_{\abs{y} \geq \xi} I_{i}(y,p) dy \nonumber \\
	&=: \sum_{i\in\mathcal{I}} A^{i}_\eta(\xi) + \sum_{i\in\mathcal{I}} B^{i}_\eta(\xi).
	\end{align*}
	We now exploit the fact that $\alpha_i^{-1,*}(z_iv) \in C_c(\R^n)$ is uniformly continuous to deduce for every $\varepsilon > 0$ the existence of a radius $\xi_{i} > 0$ such that 
	\begin{align*}
	\abs{A_\eta^{i}(\xi_{i})} \leq \sup_{\abs{y}<\xi_{i}} \abs{ (z_iv)(\alpha_i^{-1}(\alpha_i(p)))-(z_iv)(\alpha_i^{-1}(\alpha_i(p)-y)) } \int_{\R^n} \rho_\eta (y) \ dy \leq \varepsilon. 
	\end{align*}
	Since the manifold $\Gamma$ is compact, we can define $\xi_0>0$ as the minimum over all $\xi_{i}.$ By the properties of $\rho_\eta,$ it is then possible to find $\eta_0 > 0$ such that
	\begin{align*}
	\abs{B^{i}_\eta(\xi_0)} \leq 2 \norm{v}_\infty \int_{\abs{y}\geq \xi_0} \rho_\eta(y) \ dy \leq C\varepsilon
	\end{align*}
	for all $\eta < \eta_0.$ These two estimates thus imply for every $\varepsilon > 0$ the existence of $\eta_0 > 0$ such that 
	\[ \norm{v-v_\eta}_\infty \leq C \varepsilon \text{ for all } \eta < \eta_0, \] which is the desired convergence.
	We now prove the second assertion of the lemma, namely the estimate \eqref{eq:approx_grad_rate}. Since   
	\begin{align}\label{eq:lin_grad_approx}
	\norm{\nabla_\Gamma v_\eta}_{L^2(\Gamma)} = \norm{\sum_{i\in\mathcal{I}}\nabla_\Gamma \left( \alpha_i^*\left(\rho_\eta * \alpha_i^{-1,*}(z_i v)\right)\right)}_{L^2(\Gamma)}
	\end{align}
	it is sufficient to estimate each summand on the right-hand side in \eqref{eq:lin_grad_approx}. We write the gradient on $\Gamma$ in local coordinates to obtain
	\begin{align*}
	\norm{\nabla_\Gamma \left( \alpha_i^*\left(\rho_\eta * \alpha_i^{-1,*}(z_i v)\right)\right)}_{L^2(\Gamma)}
	= \norm{\sum_{k,l}^2 g^{kl}\frac{\partial}{\partial x_k}\left(\rho_\eta * \alpha_i^{-1,*}(z_i v)\right)\partial_{x_l}\sqrt{g}}_{L^2(\alpha_i(U_i))}
	\end{align*}
	and use the fact that all entries in the metric tensor $g$ are bounded, first on each $U_i$ and then by the compactness of $\Gamma$ on the whole manifold, to see 
	\begin{align*}
	&\norm{\sum_{k,l}^n g^{kl}\frac{\partial}{\partial x_k}\left(\rho_\eta * \alpha_i^{-1,*}(z_i v)\right)\partial x_l\sqrt{g}}_{L^2(\alpha_i(U_i))}  \nonumber \\ \leq& C \norm{\sum_{k,l}^n \left(\frac{\partial}{\partial_{x_k}}\rho_\eta\right) * \alpha_i^{-1,*}(z_i v)}_{L^2(\alpha_i(U_i))} 
	\leq \frac{C}{\eta} \norm{\alpha_i^{-1,*}(z_i v)}_{L^2(\alpha_i(U_i))}
	\end{align*}
	where the last inequality is again due to Young's inequality for convolutions and the chain rule produced the factor $\frac{1}{\eta}.$ We use again the estimate
	\begin{align*}
	\norm{\alpha_i^{-1,*}(z_i v)}_{L^2(\alpha_i(U_i))} \leq C \norm{(z_i v)}_{L^2(U_i)}
	\end{align*}
	and deduce inequality \eqref{eq:approx_grad_rate}.
\end{proof}

\begin{proposition}\label{prop:schauder_laplace}
	For every $g \in C^1(\Gamma)$ with $\int_\Gamma g = 0$, there exists a solution $\Psi \in C^2(\Gamma)$ to the problem
	\begin{alignat*}{2}
	\Delta_\Gamma \Psi &= g  &\qquad \text{ on } \Gamma, \qquad
	\int_\Gamma \Psi = 0. 
	\end{alignat*}
	Furthermore, the estimate
	\begin{equation}\label{eq:testfunction_estimate}
	\norm{\Psi}_{C^2(\Gamma)} \leq C \norm{g}_{C^1(\Gamma)}
	\end{equation}
	holds.
\end{proposition}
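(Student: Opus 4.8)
The plan is to solve the Poisson problem by the standard two-step route: first produce a weak ($H^1$) solution by a variational argument on the subspace of zero-mean functions, and then upgrade its regularity to $C^2$ (indeed $C^{2,\alpha}$) by elliptic theory, extracting the quantitative bound \eqref{eq:testfunction_estimate} along the way. The zero-mean normalization is precisely what singles out the unique representative orthogonal to the kernel of $\SL$ (the constants) and is what makes both existence and the estimate possible.

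For existence I would work in the Hilbert space $\dot H^1(\G) := \lbrace w \in H^1(\G) : \int_\G w = 0 \rbrace$. On $\dot H^1(\G)$ the Poincar\'e inequality on the compact manifold $\G$ makes the Dirichlet form $a(u,w) = \int_\G \SG u \cdot \SG w$ coercive and bounded, while $w \mapsto -\int_\G g w$ is a bounded linear functional. Lax--Milgram then yields a unique $\Psi \in \dot H^1(\G)$ with $a(\Psi,w) = -\int_\G g w$ for all $w \in \dot H^1(\G)$; since $\int_\G g = 0$, both sides are unchanged under adding constants to $w$, so this weak identity holds for all $w \in H^1(\G)$, i.e. $\Psi$ is a weak solution of $\SL \Psi = g$. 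Testing with $w=\Psi$ and using Poincar\'e gives at once the energy bound $\norm{\Psi}_{H^1(\G)} \leq C\norm{g}_{L^2(\G)} \leq C\norm{g}_{C^1(\G)}$.

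For regularity I would localize with the finite atlas $\lbrace (U_i,\alpha_i)\rbrace_{i\in\IS{I}}$ from Section~\ref{sec:prelim}. In each chart $\SL$ reads $\tfrac{1}{\sqrt g}\partial_{x_k}(\sqrt g\, g^{kl}\partial_{x_l}\cdot)$, a uniformly elliptic second-order operator with smooth coefficients (since $\G$ is smooth and compact, the ellipticity constants and coefficient norms are uniform over the finitely many charts). Because $g \in C^1(\G) \subset C^{0,\alpha}(\G)$ for every $\alpha \in (0,1)$, interior Schauder estimates applied chart by chart and patched via the partition of unity give the global bound
\[
\norm{\Psi}_{C^{2,\alpha}(\G)} \leq C\left(\norm{g}_{C^{0,\alpha}(\G)} + \norm{\Psi}_{C^0(\G)}\right),
\]
so in particular $\Psi \in C^2(\G)$ once the right-hand side is finite.

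The only genuinely delicate point is the lower-order term $\norm{\Psi}_{C^0(\G)}$: on a closed manifold one cannot simply absorb it, since $\SL$ has the constants in its kernel, and the $H^1$-bound alone does not control $\norm{\Psi}_{C^0}$ in dimension two (where $H^1\not\hookrightarrow L^\infty$). I would bridge this gap by $L^2$ elliptic regularity: the weak solution lies in $H^2(\G)$ with $\norm{\Psi}_{H^2(\G)} \leq C(\norm{g}_{L^2(\G)} + \norm{\Psi}_{L^2(\G)})$, and combined with the energy bound this gives $\norm{\Psi}_{H^2(\G)} \leq C\norm{g}_{C^1(\G)}$; the Sobolev embedding $H^2(\G)\hookrightarrow C^0(\G)$, valid since $\dim\G = 2$, then yields $\norm{\Psi}_{C^0(\G)} \leq C\norm{g}_{C^1(\G)}$. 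Substituting into the Schauder estimate and using $\norm{g}_{C^{0,\alpha}(\G)} \leq C\norm{g}_{C^1(\G)}$ gives \eqref{eq:testfunction_estimate}, while $\int_\G \Psi = 0$ holds by construction. (Alternatively one may close the gap by a compactness argument: were the estimate false there would be $g_k$ with $\norm{g_k}_{C^1(\G)}\to 0$ but $\norm{\Psi_k}_{C^0(\G)} = 1$; the Schauder bound makes $\lbrace \Psi_k\rbrace$ precompact in $C^2(\G)$, any limit $\Psi$ satisfies $\SL\Psi = 0$ with $\int_\G \Psi = 0$, hence $\Psi\equiv 0$, contradicting $\norm{\Psi}_{C^0(\G)} = 1$.)
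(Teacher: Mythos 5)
Your proposal is correct and takes essentially the same approach the paper relies on: the paper's proof is a one-line appeal to ``classical regularity theory'' with details deferred to \cite[Proposition~8.7]{Diss}, and your argument---Lax--Milgram on the zero-mean subspace of $H^1(\Gamma)$, chart-by-chart Schauder estimates, and the $H^2(\Gamma)\hookrightarrow C^0(\Gamma)$ embedding (valid since $\dim\Gamma=2$) to control the lower-order term that cannot be absorbed on a closed manifold---is precisely that classical argument carried out in full. In particular, your handling of the kernel of $\Delta_\Gamma$ (the constants) via the zero-mean normalization, both for coercivity and for the final estimate, is the correct and standard resolution of the only delicate point.
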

\begin{proof}
  The result follows from classical regularity theory. Details can be found in \cite[Proposition~8.7]{Diss}.
\end{proof}

\subsection{Compactness results}
\subsubsection{Compactness of $\lbrace \varphi_\eps\rbrace$}

The main purpose of this paragraph is to prove the convergence of the family $\lbrace \varphi_\varepsilon \rbrace_{\varepsilon \geq 0}$. 
We expect the limit $\varphi := \lim_{\varepsilon \rightarrow 0} \varphi_\varepsilon$ to be a function which takes only the values $\pm 1$, although it is not directly clear in which sense this convergence is meant. 
\begin{proposition}\label{prop:conv_u}
	Under the assumptions from Proposition \ref{prop:collected_conv} there exists a set $Q^+ \subset [0,T) \times \G$ and a subsequence of $\lbrace \varphi_{\varepsilon} \rbrace_{\varepsilon> 0}$ (which we denote by $\lbrace \varphi_{\varepsilon_k} \rbrace$) such that 
	\begin{enumerate}
		\item $\varphi_{\varepsilon_k}(x,t) \xrightarrow{k \rightarrow \infty} \varphi(x,t):= \begin{cases} 1 \ \ \text{ for } (x,t) \in Q^+ \\ -1 \text{ else } \end{cases}$ almost everywhere in $(0,T) \times \G.$
		\item $\varphi_{\varepsilon_k} \xrightarrow{k \rightarrow \infty} \varphi$ in $C^{1/9}\left( [0,T]; L^2(\G) \right).$
		\item $\chi_{Q^+} \in L_{w^\ast}^\infty\left(0,T; BV(\G) \right).$
	\end{enumerate}
\end{proposition}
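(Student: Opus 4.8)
The plan is to establish the three assertions by first extracting the spatial $BV$-structure of the phase field, then proving equicontinuity in time, and finally combining the two through a generalised Arzel\`a--Ascoli argument.

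First I would record the consequences of the energy estimate \eqref{eq:energy_est_lin_growth}. Since $\eps^{-1}\int_\G W(\varphi_\eps)\,d\mathcal H^2$ is bounded, we get $\int_\G W(\varphi_\eps(\cdot,t))\,d\mathcal H^2\to0$, so any a.e.\ limit of $\varphi_\eps$ takes values in $\{W=0\}=\{\pm1\}$; moreover, as $W(s)=(1-s^2)^2$, boundedness of $\eps^{-1}\int_\G W(\varphi_\eps)$ forces $\int_\G \varphi_\eps^4\leq C$, so $\{\varphi_\eps\}$ is bounded in $L^\infty(0,T;\L{4})$ uniformly in $\eps$. The Modica--Mortola trick (Lemma \ref{l:modica_mortola_trick}) supplies the crucial uniform bound $\sup_t\norm{H(\varphi_\eps(\cdot,t))}_{W^{1,1}(\G)}\leq C$, i.e.\ $H(\varphi_\eps)$ is bounded in $L^\infty(0,T;BV(\G))$. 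Because $BV(\G)\hookrightarrow\hookrightarrow\L{1}$ compactly and because the lower bound in \eqref{est:propH} gives $\norm{\varphi_\eps(\cdot,t)-\varphi_{\eps'}(\cdot,t)}_{\L{2}}^2\leq c_1^{-1}\norm{H(\varphi_\eps(\cdot,t))-H(\varphi_{\eps'}(\cdot,t))}_{\L{1}}$, this already yields, for each fixed $t$, precompactness of $\{\varphi_\eps(\cdot,t)\}$ in $\L{2}$.

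Next I would establish uniform equicontinuity in time. Testing the weak formulation \eqref{eq:weak_form_time_der_phi} gives $\norm{\partial_t\varphi_\eps}_{\H{-1}}\leq\norm{\SG\mu_\eps}_{\L{2}}$, so by \eqref{eq:energy_est_lin_growth} the family $\{\partial_t\varphi_\eps\}$ is bounded in $L^2(0,T;\H{-1})$, whence $\norm{\varphi_\eps(\cdot,t)-\varphi_\eps(\cdot,s)}_{\H{-1}}\leq C\abs{t-s}^{1/2}$ uniformly in $\eps$. The main obstacle is to upgrade this $\H{-1}$-H\"older bound to a H\"older bound in $\L{2}$ with a rate independent of $\eps$, and this is exactly where the manifold-adapted smoothing operator $T_\eta$ of Lemma \ref{l:approx_on_manifolds} enters. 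For $w:=\varphi_\eps(\cdot,t)-\varphi_\eps(\cdot,s)$ I would split $\norm{w}_{\L{2}}\leq\norm{w-T_\eta w}_{\L{2}}+\norm{T_\eta w}_{\L{2}}$. The smoothed part is controlled by duality: the gradient estimate \eqref{eq:approx_grad_rate} applied to the adjoint gives $\norm{T_\eta^\ast\zeta}_{\H{1}}\leq C\eta^{-1}\norm{\zeta}_{\L{2}}$, hence $\norm{T_\eta w}_{\L{2}}\leq C\eta^{-1}\norm{w}_{\H{-1}}\leq C\eta^{-1}\abs{t-s}^{1/2}$. For the approximation error I would \emph{not} use the (non-uniform) $H^1$-bound on $\varphi_\eps$, but instead the $BV$-bound on $H(\varphi_\eps)$: writing the convolution as an average of differences and combining the lower bound in \eqref{est:propH} with $\norm{H(\varphi_\eps)-H(\varphi_\eps)(\cdot-y)}_{\L{1}}\leq\abs{y}\norm{H(\varphi_\eps)}_{BV}$ yields $\norm{\varphi_\eps-T_\eta\varphi_\eps}_{\L{1}}\leq C\eta^{1/2}$, and interpolating $\L{2}$ between $\L{1}$ and the uniformly bounded $\L{4}$ turns this into $\norm{w-T_\eta w}_{\L{2}}\leq C\eta^{\beta}$ for a fixed $\beta>0$. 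Balancing the two contributions by choosing $\eta$ an appropriate power of $\abs{t-s}$ then produces $\norm{\varphi_\eps(\cdot,t)-\varphi_\eps(\cdot,s)}_{\L{2}}\leq C\abs{t-s}^{1/9}$ uniformly in $\eps$; the precise value of the exponent is a matter of bookkeeping in this optimisation. This is the most delicate step, because in the manifold setting the mollification must be carried out chart by chart, and it is the $BV$-bound of $H(\varphi_\eps)$, rather than any gradient bound on $\varphi_\eps$ itself, that makes the error uniform in $\eps$.

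Finally I would conclude. The uniform $C^{1/9}([0,T];\L{2})$ bound gives equicontinuity in time, while the fixed-time precompactness in $\L{2}$ from the first step provides compactness of the values; a generalised Arzel\`a--Ascoli theorem then yields a subsequence $\{\varphi_{\epsk}\}$ converging in $C([0,T];\L{2})$, and the limit inherits the H\"older regularity, which proves (2). Passing to a further subsequence gives a.e.\ convergence on $(0,T)\times\G$; together with $\int_\G W(\varphi_\eps)\to0$ the limit takes only the values $\pm1$, and setting $Q^+:=\{\varphi=1\}$ proves (1). For (3) I would use that $H(\varphi_{\epsk})\to H(1)\chi_{Q^+}$ in $\L{1}$ (since $H(-1)=0$ and $H(1)>0$), so lower semicontinuity of the $BV$-seminorm under $L^1$-convergence transfers the uniform bound on $\norm{H(\varphi_{\epsk}(\cdot,t))}_{BV}$ to $\chi_{Q^+}$, yielding $\chi_{Q^+}\in L_{w^\ast}^\infty(0,T;BV(\G))$.
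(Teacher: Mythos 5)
Your overall strategy is the same as the paper's (which reduces the proposition to Lemma \ref{l:estChen} and then invokes Chen's argument verbatim): the Modica--Mortola function $H$ with its uniform $W^{1,1}$ bound, the chart-wise smoothing operator $T_\eta$, time regularity for $\varphi_\eps$ extracted from $\partial_t\varphi_\eps=\SL\mu_\eps$, a splitting into ``approximation error plus smoothed part'' balanced by an optimal choice of $\eta$, and finally Arzel\`a--Ascoli, a.e.\ convergence, and $BV$-lower semicontinuity for assertions (1)--(3). Two details differ. First, you treat the smoothed part by duality, $\norm{T_\eta w}_{\L{2}}\leq C\eta^{-1}\norm{w}_{H^{-1}(\G)}$, which requires the $H^1$ bound for the \emph{adjoint} $T_\eta^\ast$; Lemma \ref{l:approx_on_manifolds} only states it for $T_\eta$ itself, and on a manifold $T_\eta^\ast$ picks up metric-determinant weights. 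This is routine to fix (the paper avoids it altogether by pairing the equation directly with $\varphi_\eps^\eta$ via the identity \eqref{eq:chenLemma_help}), so I regard it as a presentational, not a substantive, issue.

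The substantive problem is your treatment of the approximation error, and it is not ``a matter of bookkeeping'': your chain of estimates cannot reach the exponent $1/9$. You bound $\norm{\varphi_\eps-T_\eta\varphi_\eps}_{\L{1}}\leq C\eta^{1/2}$ and then interpolate between $\L{1}$ and $\L{4}$, which gives $\norm{w-T_\eta w}_{\L{2}}\leq C\norm{w-T_\eta w}_{\L{1}}^{1/3}\norm{w-T_\eta w}_{\L{4}}^{2/3}\leq C\eta^{1/6}$, i.e.\ $\beta=1/6$. Balancing $\eta^{1/6}$ against $\eta^{-1}\abs{t-s}^{1/2}$ forces $\eta=\abs{t-s}^{3/7}$ and yields only
\begin{equation*}
\norm{\varphi_\eps(\cdot,t)-\varphi_\eps(\cdot,s)}_{\L{2}}\leq C\abs{t-s}^{1/14},
\end{equation*}
and since $1/14<1/9$, a uniform $C^{1/14}$ bound plus compactness gives convergence only in $C^{\gamma}$ for $\gamma<1/14$, so assertion (2) as stated is out of reach. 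The repair is easy and uses an inequality you already wrote down in your first step: apply the quadratic lower bound $c_1\abs{s_1-s_2}^2\leq\abs{H(s_1)-H(s_2)}$ \emph{directly to translates}, so that
\begin{equation*}
\int_\G\abs{\varphi_\eps(x)-\varphi_\eps(x-y)}^2\,d\mathcal{H}^2\leq c_1^{-1}\norm{H(\varphi_\eps)-H(\varphi_\eps)(\cdot-y)}_{\L{1}}\leq C\abs{y},
\end{equation*}
which gives the $\L{2}$ modulus $\norm{w-T_\eta w}_{\L{2}}\leq C\eta^{1/2}$ without any detour through $\L{1}$ (taking square roots inside the integral and interpolating back up is exactly where you lose). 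With $\beta=1/2$ the balance $\eta=\abs{t-s}^{1/3}$ produces the uniform exponent $1/6>1/9$, which suffices; the paper's Besov-embedding route ($W^{1,1}(\G)\hookrightarrow B^{2\alpha}_{1,1}(\G)$, $\alpha<1/2$) is the same idea and yields exponents up to $1/8$, whence the $C^{1/9}$ convergence claimed in the proposition.
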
 
The proof relies on the following lemma. 
\begin{lemma}\label{l:estChen}
	There exists a positive constant C which is independent of $\varepsilon$ such that
	\begin{equation*}
	\sup_{0\leq t \leq T}\norm{H(\varphi_\eps(t))}_{W^{1,1}(\G)} + \norm{H(\varphi_\eps)}_{C^{1/8}([0,T);L^1(\G))} + \norm{\varphi_\eps}_{C^{1/8}([0,T);L^2(\G))} \leq C.
	\end{equation*}
\end{lemma}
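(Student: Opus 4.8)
The plan is to assemble the three summands from two \emph{independent} sources of uniform regularity — a spatial bound coming from the Modica--Mortola structure and a temporal bound in the negative norm $\H{-1}$ coming from the equation \eqref{eq:CH1} — and then to interpolate between them by means of the smoothing operator $T_\eta$ of Lemma \ref{l:approx_on_manifolds}. The first summand is almost immediate: the gradient part $\sup_t\norm{\SG H(\varphi_\eps(t))}_{\L1}\le C$ is exactly \eqref{eq:est_modica_mortola_trick}, while for the $\L1$-part I use $H(-1)=0$ together with \eqref{est:propH} to get $\abs{H(s)}\le c_2\abs{s+1}(2+\abs{s})\le C(1+s^2)$; since \eqref{eq:energy_est_lin_growth} controls $\eps^{-1}\int_\G W(\varphi_\eps(t))$ uniformly and $W(s)=(1-s^2)^2$, a short computation gives $\sup_t\norm{\varphi_\eps(t)}_{L^4(\G)}\le C$ and hence $\sup_t\norm{H(\varphi_\eps(t))}_{\L1}\le C$. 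For the temporal regularity I read off from \eqref{eq:weak_form_time_der_phi} that $\norm{\pd_t\varphi_\eps(t)}_{\H{-1}}\le\norm{\SG\mu_\eps(t)}_{\L2}$, and \eqref{eq:energy_est_lin_growth} bounds $\int_0^T\norm{\SG\mu_\eps}_{\L2}^2$ uniformly; integrating and using Cauchy--Schwarz yields the uniform estimate $\norm{\varphi_\eps(t)-\varphi_\eps(s)}_{\H{-1}}\le C\abs{t-s}^{1/2}$.

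The crucial — and genuinely surface-specific — step is a uniform \emph{spatial} $\L2$-modulus of continuity for $\varphi_\eps$. This cannot come from an $\H1$-bound, which blows up like $\eps^{-1/2}$; instead I extract it from the lower bound in \eqref{est:propH}. Pointwise one has $c_1\abs{\varphi_\eps(x)-\varphi_\eps(x')}^2\le\abs{H(\varphi_\eps(x))-H(\varphi_\eps(x'))}$, so a chartwise $W^{1,1}$-translation estimate for $H(\varphi_\eps(\cdot,t))$ (whose $W^{1,1}(\G)$-norm is bounded by the first summand) converts into $\norm{\varphi_\eps(\cdot,t)-\varphi_\eps(\cdot-\eta y,t)}_{\L2}\le C(\eta\abs{y})^{1/2}$, uniformly in $t$ and $\eps$. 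Writing $\varphi_\eps-T_\eta\varphi_\eps$ as an average over $\abs{y}\le 1$ of such spatial differences (the cut-offs $z_i$ and the metric coefficients contributing only bounded factors by compactness of $\G$) and applying Minkowski's inequality then gives $\norm{\varphi_\eps(t)-T_\eta\varphi_\eps(t)}_{\L2}\le C\eta^{1/2}$.

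With $w:=\varphi_\eps(t)-\varphi_\eps(s)$ and, by linearity, $T_\eta w=T_\eta\varphi_\eps(t)-T_\eta\varphi_\eps(s)$, I split $\norm{w}_{\L2}^2=\langle w,T_\eta w\rangle+\langle w,w-T_\eta w\rangle$. The first term is bounded by $\H{-1}$--$\H1$ duality through $\norm{w}_{\H{-1}}\norm{T_\eta w}_{\H1}\le C\eta^{-1}\abs{t-s}^{1/2}\norm{w}_{\L2}$, using \eqref{eq:approx_bounded}--\eqref{eq:approx_grad_rate}; the second by $\norm{w}_{\L2}\norm{w-T_\eta w}_{\L2}\le C\eta^{1/2}\norm{w}_{\L2}$ from the previous paragraph. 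Dividing by $\norm{w}_{\L2}$ leaves $\norm{w}_{\L2}\le C\eta^{-1}\abs{t-s}^{1/2}+C\eta^{1/2}$, and the choice $\eta\sim\abs{t-s}^{1/3}$ (truncated at $r/2$, the range $\abs{t-s}\ge 1$ being trivial) produces $\norm{\varphi_\eps(t)-\varphi_\eps(s)}_{\L2}\le C\abs{t-s}^{1/6}\le C\abs{t-s}^{1/8}$, the third summand. The second summand then follows directly from the upper bound in \eqref{est:propH}, Hölder's inequality and the uniform $L^4$-bound, via $\norm{H(\varphi_\eps(t))-H(\varphi_\eps(s))}_{\L1}\le c_2\norm{w}_{\L2}\norm{1+\abs{\varphi_\eps(t)}+\abs{\varphi_\eps(s)}}_{\L2}\le C\abs{t-s}^{1/8}$.

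I expect the main obstacle to be the uniform spatial $\L2$-modulus of continuity of the second paragraph. On $\G$ the translation needed to run the $W^{1,1}$-estimate is only defined chart-by-chart through the geodesic charts $\alpha_i$, and one has to check carefully that passing through the lower Modica--Mortola bound, the partition of unity $\{z_i\}$ and the metric tensor all contribute only $\eps$-independent constants. This is precisely the point where the compactness of $\G$ and the uniformity of the finite atlas from Section \ref{sec:prelim} must be used; everything else is a bookkeeping of the two regularity scales via $T_\eta$.
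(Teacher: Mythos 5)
Your proof is correct, and it shares the paper's skeleton --- the $W^{1,1}(\Gamma)$ bound on $H(\varphi_\eps)$ from Lemma \ref{l:modica_mortola_trick}, the smoothing operator $T_\eta$ with the rates \eqref{eq:approx_bounded}--\eqref{eq:approx_grad_rate}, and the equation \eqref{eq:weak_form_time_der_phi} to trade time increments for $\nabla_\Gamma\mu_\eps$ --- but you deviate at two points, and the deviation is genuine. First, the paper derives the spatial modulus $\norm{\varphi_\eps-T_\eta\varphi_\eps}_{L^2(\Gamma)}\le C\eta^{\alpha}$ from the Besov embedding $W^{1,1}(\Gamma)\hookrightarrow B^{2\alpha}_{1,1}(\Gamma)$, which forces $\alpha<\tfrac12$ (see \eqref{est:phasefield_Besov}--\eqref{est:phasefield_Besov_final}); you instead obtain the endpoint rate $\eta^{1/2}$ from the elementary chartwise translation estimate $\norm{f-f(\cdot-h)}_{L^1}\le |h|\,\norm{\nabla f}_{L^1}$ applied to localizations of $H(\varphi_\eps)$, combined with the quadratic lower bound in \eqref{est:propH}. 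Second, you combine the two regularity scales by the duality splitting $\norm{w}_{L^2}^2=\langle w,T_\eta w\rangle_{H^{-1},H^1}+\langle w,w-T_\eta w\rangle_{L^2}$ rather than by the quadratic expansion \eqref{eq:chenLemma_help}. This difference is not cosmetic: optimizing the paper's bound $\eta^{2\alpha}+\eta^{-1}(t-\tau)^{1/2}$ over $\eta$ yields the H\"older exponent $\alpha/\left(2(2\alpha+1)\right)$, which is strictly below $\tfrac18$ for every admissible $\alpha<\tfrac12$ (it equals $\tfrac1{12}$ for the paper's choice $\alpha=\tfrac14$; moreover, with the paper's prescription $\eta\le(t-\tau)^{1/2}$ the term $\eta^{-1}(t-\tau)^{1/2}$ is of order one, so \eqref{est:chen_final} does not follow exactly as written). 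Your endpoint rate is precisely what makes $\tfrac18$ reachable --- it is Chen's original Euclidean mechanism transplanted to $\Gamma$ --- and your splitting even improves the exponent to $\tfrac16$, which on $[0,T]$ implies the stated $C^{1/8}$ bound. The price you pay is that translations exist only chart by chart, so your argument hinges on the bookkeeping you flag: writing $(z_i\varphi_\eps)(x)-(z_i\varphi_\eps)(x-h)=z_i(x)\left[\varphi_\eps(x)-\varphi_\eps(x-h)\right]+\left[z_i(x)-z_i(x-h)\right]\varphi_\eps(x-h)$, the first piece is handled by the $H$-trick and the $W^{1,1}$ bound, the second is $O(|h|)$ in $L^2$ by the uniform $L^2$ bound, and the exponential charts, partition of unity and metric factors contribute only $\eps$-independent constants by compactness of $\Gamma$; the Besov route avoids translations altogether at the cost of losing the endpoint. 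The remaining ingredients of your argument (the uniform $L^4$ bound from \eqref{eq:energy_est_lin_growth}, the estimate $\norm{\varphi_\eps(t)-\varphi_\eps(s)}_{H^{-1}(\Gamma)}\le C|t-s|^{1/2}$, and the $C^{1/8}([0,T);L^1(\Gamma))$ bound for $H(\varphi_\eps)$ via Cauchy--Schwarz and \eqref{est:propH}) coincide with the paper's.
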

\begin{proof}
	The structure of the proof is the same as in \cite[Lemma 3.2]{ChenCH}. 
		
	We begin our proof with the estimate for $\norm{\varphi_\eps}_{C^{1/8}([0,T);L^2(\Omega))}.$ Our aim is to show that 
	\begin{equation}\label{eq:u_in_C}
	\sup_{t,\tau \in [0,T], t \neq \tau} \frac{\left( \int_\Omega \left| \varphi_\eps(x,t) - \varphi_\eps(x,\tau) \right|^2 \ dx \right)^{1/2}}{\left(t - \tau \right)^{1/8}}\leq C(T).
	\end{equation} 
	Since it is difficult to control this difference between $\varphi_\eps(x,t)$ and $\varphi_\eps(x,\tau)$ directly, we use the approximation result on the manifold $\G$ in Lemma \ref{l:approx_on_manifolds} and define $\varphi_\eps^\eta := T_\eta \varphi_\eps.$ 
	As in \cite{ChenCH}, we obtain
	\begin{align}\label{eq:chenLemma_help}
	&\int_\Gamma \left( \varphi_\eps(x,t) - \varphi_\eps(x,\tau) \right)^2 \ dx \nonumber \\ &\quad= \int_\Gamma \displaystyle{\left[ \left( \varphi_\eps^\eta(x,t) - \varphi_\eps^\eta(x,\tau) \right) - \left(\varphi_\eps(x,t) - \varphi_\eps(x,\tau)\right) \right]^2 \ dx} \nonumber \\
          & \qquad + 2 \int_\Gamma \left( \varphi_\eps^\eta(x,t)-\varphi_\eps^\eta(x,\tau)\right)\left( \varphi_\eps(x,t)-\varphi_\eps(x,\tau)\right) \ dx \nonumber \\
          & \qquad -\int_\Gamma \left( \varphi_\eps^\eta(x,t) - \varphi_\eps^\eta(x,\tau) \right)^2 \ dx \nonumber \\
	&\quad \leq \int_\Gamma \displaystyle{\left[ \left( \varphi_\eps^\eta(x,t) - \varphi_\eps(x,t) \right) + \left( \varphi_\eps(x,\tau) - \varphi_\eps^\eta(x,\tau)\right) \right]^2 \ dx} \nonumber \\
	& \qquad + 2 \int_\Gamma \left( \varphi_\eps^\eta(x,t)-\varphi_\eps^\eta(x,\tau)\right)\left( \varphi_\eps(x,t)-\varphi_\eps(x,\tau)\right) \ dx
	\end{align}
	since $\left( \varphi_\eps^\eta(x,t) - \varphi_\eps^\eta(x,\tau) \right)^2$ is non-negative. It is therefore sufficient to control the right-hand side above if we want to prove \eqref{eq:u_in_C}.
	
	To this end, we first observe that for $0<\alpha<\frac{1}{2}$ and any $t \in (0,T)$ 
	\begin{align*}
	&\int_\G \int_\G \frac{\abs{\varphi_\eps(x,t)-\varphi_\eps(y,t)}^2}{d(x,y)^{2+2\alpha}} \ d\mathcal{H}^2(x) \ d\mathcal{H}^2(y) \\
	&\quad \leq C \int_\G \int_\G \frac{\abs{H(\varphi_\eps(x,t))-H(\varphi_\eps(y,t))}}{d(x,y)^{2+2\alpha}} \ d\mathcal{H}^2(x) \ d\mathcal{H}^2(y).
	\end{align*}
	by the properties of $H$ discussed in Lemma \ref{l:modica_mortola_trick}. Moreover, for any function $f$ in the Besov space $B_{1,1}^{2\alpha}(\G)$ we have
	\begin{align*}
	\int_\G \int_\G \frac{\abs{f(x)-f(y)}}{d(x,y)^{2+2\alpha}} \ d\mathcal{H}^2(x) \ d\mathcal{H}^2(y) \leq C \norm{f}_{B_{1,1}^{2\alpha}(\G)}.
	\end{align*}
	Since $W^{1,1}(\G)$ embeds in the Besov space $B_{1,1}^{2\alpha}(\G)$ for $0<\alpha < \frac{1}{2}$ and $H(\varphi_\eps(\cdot,t)) \in W^{1,1}(\G)$ with $\norm{H(\varphi_\eps)(\cdot,t)}_{W^{1,1}(\G)} \leq C(T)$ for all $t \in (0,T)$ by Lemma \ref{l:modica_mortola_trick}, we can choose $f=H(\varphi_\varepsilon(\cdot,t))$ in the inequality above and thus deduce
	\begin{align}\label{est:phasefield_Besov}
	&\int_\G \int_\G \frac{\abs{\varphi_\eps(x,t)-\varphi_\eps(y,t)}^2}{d(x,y)^{2+2\alpha}} \ d\mathcal{H}^2(x) \ d\mathcal{H}^2(y) \nonumber \\ &\quad \leq \  C\norm{H(\varphi_\varepsilon(\cdot,t))}_{B_{1,1}^{2\alpha}(\G)}\leq C\norm{H(\varphi_\varepsilon(\cdot,t))}_{W^{1,1}(\G)} \leq C(T).
	\end{align}
	We refer the reader to \cite{LUN} for more details on Besov spaces while the embedding $W^{1,1}(\G) \hookrightarrow B_{1,1}^{2\alpha}(\G)$ can e.g.\ be found in \cite[Corollary 6.14]{AB}. 
	
	Observe that \eqref{est:phasefield_Besov} also holds for each localization $\alpha_i^{-1,*}(z_i\varphi_\eps)$ of $\varphi_\eps$ where $\alpha_i$ and $z_i$ are defined as in Section \ref{sec:prelim}. Because of $\int_{\mathbb{R}^2} \rho(y) dy = 1$ and \eqref{est:phasefield_Besov}, we can thus estimate
	\begin{align}\label{est:phasefield_Besov_final}
	&\int_\Gamma \left|  \varphi_\eps^\eta(x,t) - \varphi_\eps(x,t) \right|^2 \ d\mathcal{H}^2(p) \nonumber \\ 
	&=\int_\G \abs{\sum_{i\in\IS{I}} \int_{B_1(0)} \rho(y)\left[ \alpha_i^{-1,*}\left(z_i\varphi_\eps\right)\left(\alpha_i(p)-\eta y,t\right) - \alpha_i^{-1,*}(z_i\varphi_\eps)(\alpha_i(p),t)\right] \ dy}^2 \ d\mathcal{H}^2(p) \nonumber \\
	&\leq C \eta^{2+2\alpha}\int_\G \sum_{i\in\IS{I}} \int_{B_1(0)} \rho(y)\abs{y}^{2+2\alpha}\frac{\abs{ \alpha_i^{-1,*}\left(z_i\varphi_\eps\right)\left(\alpha_i(p)-\eta y,t\right) - \alpha_i^{-1,*}(z_i\varphi_\eps)(\alpha_i(p),t)}^2}{\abs{\eta y}^{2+2\alpha} } \ dy \ d\mathcal{H}^2(p) \nonumber \\
	&\leq C \eta^{2\alpha}\int_\G \sum_{i\in\IS{I}} \int_{\R^2} \frac{\abs{ \alpha_i^{-1,*}\left(z_i\varphi_\eps\right)\left(\alpha_i(p)- \tilde{y},t\right) - \alpha_i^{-1,*}(z_i\varphi_\eps)(\alpha_i(p),t)}^2}{\abs{\tilde{y}}^{2+2\alpha} } \ d\tilde{y} \ d\mathcal{H}^2(p) \nonumber \\
	&\leq C(T) \eta^{2\alpha}.
	\end{align}
	by \eqref{est:phasefield_Besov}. 
	
	For the next step, we also observe that by \eqref{eq:approx_grad_rate}
	\begin{equation*} 
	\norm{\nabla \varphi_\eps^\eta(\cdot,t)}_{L^2(\Gamma)} \leq C \eta^{-1} \norm{\varphi_\eps(\cdot,t)}_{L^2(\Gamma)} \leq C \eta^{-1} \norm{\varphi_\eps(\cdot,t)}_{L^4(\Gamma)}
	\end{equation*}
	and hence 
	\begin{equation}\label{est:grad_smooth_u}
	\norm{\nabla \varphi_\eps^\eta(\cdot,t)}_{L^2(\Gamma)} \leq \eta^{-1} C(T)
	\end{equation}
	by the estimate \eqref{eq:energy_est_lin_growth}.
	
	Again similarly as in \cite[Proof of Lemma 3.2]{ChenCH} the estimate
	\begin{align*}
	&\abs{\int_\Gamma \left( \varphi_\eps^\eta(x,t) - \varphi_\eps^\eta(x,\tau) \right) \left( \varphi_\eps(x,t) - \varphi_\eps(x,\tau) \right) \ dx} \nonumber \\
	&\quad = \big|\left\langle \left( \varphi_\eps^\eta(x,t) - \varphi_\eps^\eta(x,\tau) \right),\left( \varphi_\eps(x,t) - \varphi_\eps(x,\tau) \right) \right\rangle_{H^{-1},H^1}| \nonumber \\
	&\quad = \abs{\int_{\tau}^{t}\left\langle \SL \mu_\eps(x,s),\left( \varphi_\eps(x,t) - \varphi_\eps(x,\tau) \right) \right\rangle_{H^{-1},H^1} \ ds} \nonumber \\
	&\quad = \abs{\int_\tau^t \int_\Gamma \nabla \mu_\varepsilon(x,s) \cdot \left( \nabla \varphi_\eps^\eta(x,t) - \nabla \varphi_\eps^\eta(x,\tau) \right) \ dx \ ds} \nonumber \\
	&\quad \leq  2 \left( \int_\tau^t  \norm{\nabla \mu_\varepsilon}^2_{L^2(\G)}  \right)^{1/2} \left( t- \tau \right)^{1/2} \sup_{s \in (0,T)} \norm{\nabla \varphi_\eps^\eta(\cdot, s)}_{L^2(\Gamma)}
	\end{align*}
	holds. The energy control \eqref{eq:energy_est_lin_growth} and estimate \eqref{est:grad_smooth_u} thus yield
	\begin{align}\label{eq:control_approx_phase}
	&\int_\Gamma \left( \varphi_\eps^\eta(x,t) - \varphi_\eps^\eta(x,\tau) \right) \left( \varphi_\eps(x,t) - \varphi_\eps(x,\tau) \right) \ dx \nonumber \\
	\leq& \ C(T) \eta^{-1}(t-\tau)^{1/2} 
	\end{align}
	Choosing $\alpha=\frac{1}{4}$ and $\eta \leq (t-\tau)^{1/2},$ equation \eqref{eq:chenLemma_help} and the estimates \eqref{est:phasefield_Besov_final} and \eqref{eq:control_approx_phase} yield
	\begin{align}\label{est:chen_final}
	\int_\Gamma \left| \varphi_\eps(x,t) - \varphi_\eps(x,\tau) \right|^2 \ dx &\leq C\left( \eta^{2\alpha} + \eta^{-1}(t-\tau)^{1/2}\right)\nonumber\\ &\leq C(T)  (t-\tau)^{1/4}. 
	\end{align}
	Consequently, we deduce \eqref{eq:u_in_C}.
	
	 The estimate $\norm{H(\varphi_\eps)}_{C^{1/8}([0,T);L^1(\Gamma))} \leq C$ can be shown as in \cite[Proof of Lemma 3.2]{ChenCH}. Finally, $ \sup_{0\leq t \leq T}\norm{H(\varphi_\eps(t))}_{W^{1,1}(\Gamma)} \leq C(T)$ follows directly from Lemma \ref{l:modica_mortola_trick}.
\end{proof}

\begin{proof}[Proof of Proposition \ref{prop:conv_u}]
	The proof is based on Lemma \ref{l:estChen} and otherwise identical with the proof of \cite{ChenCH}. 
\end{proof}

\subsubsection{Weak compactness of $\lbrace \mu_\eps\rbrace$}

\begin{lemma}\label{l:weak_compactness_mu}
	There exist constants $C>0$ and $\eps_0 > 0$ such that for every $0<\eps\leq\eps_0$ and all $t \in [0,T)$ the estimate
	\begin{equation}\label{est:weak_compactness_mu}
	\norm{\mu_\eps(t)}_{H^1(\Gamma)} \leq C \left( \int_\Gamma \frac{\eps}{2}\abs{\nabla_\Gamma \varphi_\eps(t)}^2 + \frac{1}{\eps}W(\varphi_\eps(t)) \ d\mathcal{H}^2 + \norm{\nabla_\Gamma \mu_\eps(t)}_{L^2(\Gamma)} + \norm{\theta_\eps(t)}_{L^2(\Gamma)}\right)
	\end{equation}
	holds. 
\end{lemma}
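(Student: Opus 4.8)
The plan is to split the $H^1$-norm as $\norm{\mu_\eps(t)}_{H^1(\G)}^2 = \norm{\mu_\eps(t)}_{L^2(\G)}^2 + \norm{\SG\mu_\eps(t)}_{L^2(\G)}^2$. Since the gradient contribution already appears on the right-hand side of \eqref{est:weak_compactness_mu}, it suffices to control $\norm{\mu_\eps(t)}_{L^2(\G)}$. As $\G$ is a compact manifold without boundary, the Poincar\'e--Wirtinger inequality provides a constant $C$ depending only on $\G$ with
\[ \norm{\mu_\eps(t) - \tfrac{1}{\abs{\G}}\textstyle\int_\G\mu_\eps(t)}_{L^2(\G)} \leq C\norm{\SG\mu_\eps(t)}_{L^2(\G)}, \]
so that the problem reduces to bounding the mean value $\frac{1}{\abs{\G}}\int_\G\mu_\eps(t)$.

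Next I would integrate the defining relation \eqref{eq:CH2} over $\G$. Because $\G=\pd B$ is closed, the divergence theorem gives $\int_\G\SL\varphi_\eps(t) = 0$, and rewriting the affinity term via $\theta_\eps = 2(2v_\eps-1-\varphi_\eps)$ yields
\[ \int_\G\mu_\eps(t) = \int_\G\frac1\eps W'(\varphi_\eps(t)) - \frac12\int_\G\theta_\eps(t). \]
The last integral is harmless: by Cauchy--Schwarz $\abs{\int_\G\theta_\eps(t)}\leq \abs{\G}^{1/2}\norm{\theta_\eps(t)}_{L^2(\G)}$, which is one of the terms on the right-hand side of \eqref{est:weak_compactness_mu}. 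Everything therefore hinges on estimating $\int_\G\eps^{-1}W'(\varphi_\eps(t))$ by the Ginzburg--Landau energy density.

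This last step is the main obstacle. The elementary pointwise bound
\[ \abs{W'(s)} = 4\abs{s}\,\abs{1-s^2} \leq 4\bigl(1+\sqrt{W(s)}\bigr)\sqrt{W(s)} = 4\sqrt{W(s)} + 4W(s), \]
which follows from $\abs{s}\le 1 + \sqrt{W(s)}$, reduces the task to controlling $\int_\G\eps^{-1}W(\varphi_\eps(t))$ (already one of the energy terms) and the genuinely delicate quantity $\int_\G\eps^{-1}\sqrt{W(\varphi_\eps(t))}$. The difficulty is that a crude Young or Cauchy--Schwarz estimate for this term only produces a bound of order $\eps^{-1/2}$, which is insufficient, so a pointwise inequality alone does not close the argument; the term must instead be absorbed into the full energy density $\frac\eps2\abs{\SG\varphi_\eps}^2 + \eps^{-1}W(\varphi_\eps)$. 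I would carry this out following Chen's corresponding estimate in \cite{ChenCH}, transferred to the surface $\G$ through the localization of Section \ref{sec:prelim} and the uniform energy bound \eqref{eq:energy_est_lin_growth}; here the $L^4(\G)$-control of $\varphi_\eps$ coming from $\int_\G\eps^{-1}W(\varphi_\eps)\le C$ is the additional ingredient required to handle the coupling term.

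Finally, combining the Poincar\'e estimate, the mean-value identity, the resulting bound on $\int_\G\eps^{-1}W'(\varphi_\eps(t))$, and the Cauchy--Schwarz estimate for the $\theta_\eps$-contribution gives \eqref{est:weak_compactness_mu}, where $\norm{\SG\mu_\eps(t)}_{L^2(\G)}$ is retained as an explicit term on the right-hand side and $\eps_0$ is chosen so that the lower-order terms produced in the treatment of $\int_\G\eps^{-1}\sqrt{W(\varphi_\eps(t))}$ are absorbed.
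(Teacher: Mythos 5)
Your reduction steps are sound and in fact coincide with the paper's opening move: Poincar\'e--Wirtinger reduces everything to the mean value of $\mu_\eps$, and integrating \eqref{eq:CH2} over the closed surface $\G$ identifies this mean, up to the harmless $\theta_\eps$-term, with $\frac{1}{\abs{\G}}\int_\G \eps^{-1}W'(\varphi_\eps)\,d\mathcal{H}^2$ (the paper phrases this via $\omega_\eps:=\mu_\eps+\tfrac12\theta_\eps$, whose mean is exactly this quantity). The gap lies in your treatment of that term. The inequality you would need --- $\int_\G \eps^{-1}\sqrt{W(\varphi_\eps)}\,d\mathcal{H}^2$ controlled by the Ginzburg--Landau energy plus lower-order norms of $\varphi_\eps$ --- is false as a functional inequality, and no absorption argument can repair it: take $\varphi\equiv c_\eps$ constant with $W(c_\eps)=\eps$, so $1-c_\eps\sim\eps^{1/2}$; then $\int_\G \frac\eps2\abs{\SG\varphi}^2+\eps^{-1}W(\varphi)=\abs{\G}$ and every $L^p$ norm of $\varphi$ is bounded, yet $\int_\G\eps^{-1}\sqrt{W(\varphi)}=\abs{\G}\,\eps^{-1/2}\to\infty$; moreover $\tilde\mu_\eps:=-\eps\SL\varphi+\eps^{-1}W'(\varphi)$ is constant here, so adding $\norm{\SG\mu_\eps}_{L^2(\G)}$ to the right-hand side does not help either. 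Your appeal to Chen is also a mischaracterization: \cite[Lemma 3.2]{ChenCH} does not bound $\int_\G\eps^{-1}W'(\varphi_\eps)$ by absorbing anything into the energy; pointwise comparisons of $W'$ with $W$ play no role in that argument.

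What is actually needed --- and what the paper does --- is to use equation \eqref{eq:CH2} a \emph{second} time, with a non-constant test function. Multiplying \eqref{eq:CH2} by $Y\cdot\SG\varphi_\eps$ for a tangential $C^1$ vector field $Y$ and integrating by parts yields a Pohozaev-type identity in which $\overline{\omega}_\eps$ appears as a quotient: the numerator consists of the stress-energy term $\int_\G \SG Y:\bigl(\bigl(\tfrac\eps2\abs{\SG\varphi_\eps}^2+\eps^{-1}W(\varphi_\eps)\bigr)\Id-\eps\,\SG\varphi_\eps\otimes\SG\varphi_\eps\bigr)\,d\mathcal{H}^2$ together with $-\int_\G\varphi_\eps\, Y\cdot\SG\omega_\eps\,d\mathcal{H}^2$ and $-\int_\G\varphi_\eps\,\div_\G Y\,(\omega_\eps-\overline{\omega}_\eps)\,d\mathcal{H}^2$, all controlled by the energy and $\norm{\SG\omega_\eps}_{L^2(\G)}$; the denominator is $\int_\G\varphi_\eps\,\div_\G Y\,d\mathcal{H}^2$. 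The entire difficulty, which your proposal never addresses, is to choose $Y$ so that this denominator stays bounded away from zero uniformly in $\eps$: the paper takes $Y=\SG\Psi$ with $\SL\Psi=T_\eta\varphi_\eps-\overline{T_\eta\varphi_\eps}$ (Proposition \ref{prop:schauder_laplace} combined with the mollification operator $T_\eta$ of Lemma \ref{l:approx_on_manifolds}), so that $\int_\G\varphi_\eps\,\div_\G Y\approx\int_\G\varphi_\eps^2-\abs{\G}\,(\overline{\varphi}_\eps)^2\approx\abs{\G}(1-m^2)>0$, where the first approximation uses the smallness of $\int_\G W(\varphi_\eps)$ forced by the energy bound \eqref{eq:energy_est_lin_growth} and the second uses the conservation of the mean $\overline{\varphi}_\eps=m\in(-1,1)$ from Condition \ref{cond:well_prep_data}. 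The mass constraint is indispensable --- for means tending to $\pm1$ the estimate genuinely fails, as the constant example above already indicates --- and its complete absence from your argument is the symptom of the missing mechanism: the bound on the mean of $\eps^{-1}W'(\varphi_\eps)$ is the heart of the lemma and cannot be obtained from pointwise estimates on $W'$.
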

\begin{proof}
	By Poincar\'{e}'s inequality and the triangle inequality, 
	\begin{align*}
	\norm{\mu_\eps(t)}_{L^2(\Gamma)} &\leq \norm{\mu_\eps(t)-\overline{\mu}_\eps(t)}_{L^2(\Gamma)} + \abs{\overline{\mu}_\eps} \\
	& \leq C \norm{\SG \mu_\eps(t)}^2_{L^2(\G)} + \abs{\overline{(\mu_\eps+\frac{1}{2}\theta_\eps)}(t)} + C \norm{\theta_\eps(t)}^2_{L^2(\Gamma)}
	\end{align*}
	where $\overline{f}$ denotes the mean value of the function $f$ over $\G.$
	We define $\omega_\eps(t):=\left(\mu_\eps+\frac{1}{2}\theta_\eps\right)(t).$ If we keep \eqref{eq:energy_est_lin_growth} in mind, it is sufficient to control the mean value of $\omega_\eps$ if we want to prove \eqref{est:weak_compactness_mu}. 
	
	Furthermore, $\omega_\eps$ solves
	\[ \omega_\eps = - \eps \SL \varphi + \eps^{-1}W'(\varphi)
	\quad \text{ on } \Gamma \times (0,T] \]
	weakly due to \eqref{eq:CH2}.
	
	The proof can thus be reduced to the proof of \cite[Lemma 3.2]{ChenCH}. As shown there, the mean value $\overline{\omega}_\eps$ fulfils for any tangential $C^1$ vector field $Y$ on $\Gamma$ 
	\begin{align}\label{eq:mean_value_frac}
	\overline{\omega}_\eps =& \frac{1}{\int_\Gamma \varphi_\eps \div_\Gamma Y \ d\mathcal{H}^2} \bigg\lbrace \left. \int_\Gamma \SG Y:\left( \left(\frac{\varepsilon \left| \nabla \varphi_\eps(x,t) \right|^2}{2} + \frac{W(\varphi_\eps(x,t))}{\varepsilon}\right) \Id \right.\right. \nonumber \\ &  - \eps \nabla \varphi_\eps(x,t) \otimes \nabla \varphi_\eps(x,t)\bigg) - \varphi_\eps Y \cdot \SG\omega_\eps - \varphi_\eps \div_\G Y \left(\omega_\varepsilon - \overline{\omega}_\eps\right) \ d\mathcal{H}^2 \bigg\rbrace. 
	\end{align}
	
	The single necessary modification here concerns the choice of $Y.$ To this end, let $\lbrace \varphi_{\eta,\eps} \rbrace_{\eta > 0} \subset C^\infty(\Gamma)$ be the family of functions given by $\varphi_{\eta,\eps} = T_\eta \varphi_\eps.$ In particular, $\varphi_{\eta,\eps}$ approximates $\varphi_\eps$ and the estimates from Lemma \ref{l:approx_on_manifolds} are fulfiled. We then define $\Psi$ to be the solution to 
	\begin{alignat*}{2}
	\Delta_\Gamma \Psi &= \varphi_{\eta,\eps} - \overline{\varphi}_{\eta,\eps}  &\quad \text{ on } \Gamma,\quad
	\int_\Gamma \Psi = 0.
	\end{alignat*}
	By Proposition \ref{prop:schauder_laplace} this solution exists and we have
	\begin{equation}\label{eq:norm_testfct}
	\norm{\Psi}_\C{2} \leq C \norm{\varphi_{\eta,\eps}}_\C{1}.
	\end{equation}
	Together with tedious calculations, this estimate and Lemma \ref{l:approx_on_manifolds} allow us to deduce that we can find a (possibly small) $\eta > 0$ and $\eps_0 >0$ such that 
	\begin{align*}
	\abs{\overline{\omega_\eps}}\leq\frac{2C\eta^{-1}\left(1+\eps^{1/2}\eta^{n/2}\right)\left( \mathcal{F}(\varphi_\eps,v_\eps) + \norm{\SG\omega_\eps}_\L{2} \right)}{\abs{\G}(1-m_0^2)},
	\end{align*}
	which proves the lemma. The omitted details can be found in the original proof by Chen or, in the special situation treated here, in \cite[Lemma 8.11]{Diss}.
\end{proof}

\subsection{Proof of the upper bound for the discrepancy measure}

This section is the beginning of the second part of the proof for the main convergence results.  The aim here is to prove the following upper bound for the positive part of the discrepancy measure $\xi^\eps(\varphi_\eps),$ which is defined as
\[ \xi^\eps(\varphi_\eps) := \left( \frac{\eps}{2} \abs{\SG \varphi_\eps}^2 - \frac{1}{\eps}W(\varphi_\eps) \right). \]
This mainly depends on the size of $\tilde{\mu}_\eps := \mu_\eps + \frac12\theta_\eps$.

\begin{proposition}\label{prop:control_discr_measure}
  There exists a positive constant $\eta_0 \in (0,1]$ and continuous, non-increasing and positive functions $M_1$ and $M_2$ defined on $(0,\eta_0]$ such that for every $\eta \in (0,\eta_0],$ every $\eps \in \left( 0,\frac{1}{M_1(\eta)}\right]$ and every $\varphi_\eps \in H^2(\G)$ and
  \begin{equation*}
    \tilde{\mu}_\eps := -\Delta_\G \varphi_\eps +\eps^{-1} W'(\varphi_\eps)\qquad \text{on }\Gamma
  \end{equation*}
we have the estimate
	\begin{align}\label{eq:control_discr_measure}
	\int_\Gamma \left( \xi^\eps(\varphi_\eps) \right)_+ \ d\mathcal{H}^2 \leq \eta \int_\Gamma \frac\eps2|\nabla\varphi|^2 +
	\eps^{-1}W(\varphi)  \ d\mathcal{H}^2  + \eps M_2(\eta)\int_\Gamma |\tilde{\mu}_\eps|^2  \ d\mathcal{H}^2 
	\end{align}
	holds.
\end{proposition}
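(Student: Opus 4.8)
The plan is to argue by contradiction, rescaling the surface by the interface width $\eps$ and reducing the claim to a rigidity statement (Modica's gradient estimate) for entire solutions of the stationary Allen--Cahn equation on the flat plane $\R^2$. Abbreviate the energy density by $e_\eps(\varphi) := \tfrac\eps2\abs{\SG\varphi}^2 + \eps^{-1}W(\varphi)$, so that $0 \le (\xi^\eps(\varphi))_+ \le e_\eps(\varphi)$ pointwise because $W \ge 0$. The estimate is trivially true for $\eta \ge 1$, so it suffices to produce, for each fixed $\eta \in (0,1]$, a threshold $1/M_1(\eta)$ for $\eps$ and a constant $M_2(\eta)$; the monotonicity and continuity of $M_1, M_2$ can be arranged afterwards by passing to monotone continuous majorants. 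Fixing $\eta$ and assuming no such constants exist, one obtains $\eps_k \searrow 0$ and $\varphi_k \in H^2(\G)$, with $\tilde\mu_k = -\eps_k\SL\varphi_k + \eps_k^{-1}W'(\varphi_k)$, violating the estimate with $M_1 = M_2 = k$:
\begin{equation*}
\int_\G (\xi^{\eps_k}(\varphi_k))_+ \ d\mathcal H^2 \;>\; \eta \int_\G e_{\eps_k}(\varphi_k)\ d\mathcal H^2 \;+\; k\,\eps_k\int_\G \abs{\tilde\mu_k}^2 \ d\mathcal H^2 .
\end{equation*}

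First I would localise and blow up. Using the uniform atlas of geodesic normal charts $\exp_{p_i}$ from Section \ref{sec:prelim}, I select blow-up centres $p_k \in \G$ and pass to rescaled functions $w_k(y) := \varphi_k(\exp_{p_k}(\eps_k y))$ on balls $B_R(0) \subset \R^2$. Since $\G$ is smooth and compact, the pulled-back metrics $g_k(y) := (\exp_{p_k}^\ast g)(\eps_k y)$ converge, uniformly in the base point $p_k$, to the Euclidean metric in $C^\infty_{loc}(\R^2)$. A direct computation shows that the energy density, the discrepancy density and the equation all rescale by the single factor $\eps_k^{-1}$; writing $f_k(y) := \eps_k\,\tilde\mu_k(\exp_{p_k}(\eps_k y))$, the function $w_k$ solves
\begin{equation*}
-\Delta_{g_k} w_k + W'(w_k) = f_k \quad\text{on } B_R(0), \qquad \int_{B_R}\abs{f_k}^2\sqrt{g_k}\ dy \;=\; \int_{\exp_{p_k}(\eps_k B_R)} \abs{\tilde\mu_k}^2 \ d\mathcal H^2 ,
\end{equation*}
while the rescaled energy on $B_R$ equals $\eps_k^{-1}\int_{\exp_{p_k}(\eps_k B_R)} e_{\eps_k}(\varphi_k)\, d\mathcal H^2$, with the analogous identity for the positive discrepancy.

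The centres $p_k$ must be chosen so that the blow-up is non-degenerate. Following Chen's selection \cite{ChenCH} (a covering/maximal-function argument applied to the energy and forcing densities), I would pick $p_k$ carrying a definite share of the positive discrepancy and around which, at every scale between $\eps_k$ and $O(1)$, the rescaled energy stays bounded above and below. The upper energy bound, combined with the weight $k$ in the contradiction inequality, forces $\int_{B_R}\abs{f_k}^2 \to 0$ for every fixed $R$; a De Giorgi--Stampacchia truncation (using $W'(s)\,s \gtrsim s^4$ for large $\abs s$) gives a uniform $L^\infty_{loc}$ bound on $w_k$; and elliptic regularity then yields $w_k \to w$ in $C^1_{loc}(\R^2)$, where $w$ is a bounded entire solution of $\Delta w = W'(w)$ on $\R^2$. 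Here I would use that in local coordinates $\SL = \tfrac{1}{\sqrt g}\partial_i(\sqrt g\, g^{ij}\partial_j)$ and that, after rescaling, its first-order terms carry an extra factor $\eps_k$ and hence vanish in the limit, leaving precisely the flat Laplacian. Modica's gradient estimate for bounded entire solutions of $\Delta w = W'(w)$ with $W \ge 0$ gives $\tfrac12\abs{\nabla w}^2 \le W(w)$ pointwise, so the limiting positive discrepancy $\bigl(\tfrac12\abs{\nabla w}^2 - W(w)\bigr)_+$ vanishes identically on $\R^2$. Passing to the limit in the rescaled inequality on the central unit ball $B_1$, however, the positive discrepancy of $w$ on $B_1$ is bounded below by $\eta$ times the (strictly positive) limiting energy there, a contradiction that proves the proposition.

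The main obstacle is the selection of the blow-up centres together with the compactness it must guarantee: one has to ensure simultaneously that the chosen cell captures a fixed proportion of the positive discrepancy, that its rescaled energy neither degenerates nor concentrates at any scale between $\eps_k$ and $O(1)$, and hence that the forcing $f_k$ disappears in the limit so that the blow-up limit is a genuine entire solution to which Modica's theorem applies. The second, genuinely new difficulty compared with Chen's flat setting is purely geometric: one must track the metric coefficients $g^{ij}$ and $\sqrt g$ uniformly in the base point $p_k$ throughout the $\eps$-blow-up and verify that the lower-order terms of $\SL$ drop out, so that the flat-space rigidity remains applicable.
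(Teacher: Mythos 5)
Your core ingredients (the $\eps$-scale exponential charts, blow-up to an entire solution of $\Delta w = W'(w)$ on the plane, and the rigidity Lemma \ref{l:blow_up_chen}) are the same as the paper's, but the way you organize the contradiction has a genuine gap that the paper's chart-by-chart structure is specifically built to avoid. The proposition quantifies over \emph{all} $\varphi_\eps \in H^2(\G)$: there is no a priori bound on the energy $\int_\G \bigl(\tfrac{\eps_k}{2}\abs{\SG\varphi_k}^2 + \eps_k^{-1}W(\varphi_k)\bigr)\,d\mathcal{H}^2$. Hence your key claim that the weight $k$ forces $\int_{B_R}\abs{f_k}^2 \to 0$ is unfounded; the violated inequality only yields
\begin{equation*}
\int_\G \abs{\tilde\mu_k}^2\,d\mathcal{H}^2 \;<\; \frac{1}{k\,\eps_k}\int_\G \Bigl(\tfrac{\eps_k}{2}\abs{\SG\varphi_k}^2 + \eps_k^{-1}W(\varphi_k)\Bigr)\,d\mathcal{H}^2,
\end{equation*}
whose right-hand side may grow arbitrarily fast in $k$, so no smallness of the forcing follows --- the ``upper energy bound'' you invoke is exactly what is unavailable. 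The paper reverses the order of logic: local $H^1\cap L^4$ compactness is extracted from the equation itself (testing with $\zeta^k F_i$ and using $sW'(s)\gtrsim \abs{s}^q - C$, in the proof of Lemma \ref{l:chen_special_case}), and smallness of the forcing is not \emph{deduced} but \emph{imposed} chart by chart: the atlas is split into the charts $\mathcal{I}_1$ satisfying \eqref{eq:cond_A}, where the local lemma applies, and the complementary charts $\mathcal{I}_2$, whose total area is of order $\eps^2 R^2 \int_\G \abs{\tilde\mu_\eps}^2$ and whose energy is then controlled directly by elliptic estimates, with no blow-up at all.

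The second gap is the degenerate regime, and it is not a technicality. Even where the forcing vanishes, nothing prevents the selected cell's rescaled energy from tending to zero: take, e.g., $\varphi_k$ a small-amplitude oscillation about $+1$ at wavelength $\eps_k$, which has strictly positive discrepancy in every cell yet cell energies tending to $0$. Then the blow-up limit $w$ is the constant $+1$, the ``strictly positive limiting energy'' in your final step is in fact zero, both sides of the violated inequality vanish in the limit, and Modica's estimate produces no contradiction. (For such $\varphi_k$ the true mechanism validating \eqref{eq:control_discr_measure} is that $\tilde\mu_k$ itself is large, i.e.\ coercivity of the linearization near $\pm1$, not rigidity of entire solutions.) Your selection simply postulates cells whose rescaled energy is ``bounded below,'' but the contradiction hypothesis does not produce them. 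The paper handles precisely this regime by direct estimates rather than blow-up: the case $\abs{B_1^\eta}\le\eta^m$ inside Lemma \ref{l:chen_special_case} (Sobolev embedding plus interior elliptic estimates), together with the bulk estimate Lemma \ref{l:chen_bulk_control} (testing with $W'(\varphi_\eps)$ and using $W''>0$ for $\abs{s}\ge 1$) to absorb the near-interface gradient term $\int_{\lbrace\abs{\varphi_\eps}\ge1-\eta\rbrace}\eps\abs{\SG\varphi_\eps}^2$; the contradiction/rigidity argument is run only under the assumption $\abs{\lbrace \abs{F_i}\le 1-\eta\rbrace}\ge\eta^m$, which is exactly what guarantees a limit energy bounded below by $\eta^m\min_{[-1+\eta,1-\eta]}W>0$ and hence a genuine contradiction. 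Without these two direct-estimate components your argument does not close.
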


The corresponding upper bound for a bounded domain $\Omega$ is the key part of Chen's convergence proof in \cite{ChenCH}, see Theorem 3.6 there. As stated earlier, the basic idea is to study the localized equations in each chart (and thus studying equations in the Euclidean space, as Chen did) and to prove Proposition \ref{prop:control_discr_measure} by a blow-up argument. The core of the proof lies in the following Lemma \ref{l:blow_up_chen} and its application in the blow-up argument used later in the proof of Proposition \ref{prop:control_discr_measure}.

\begin{lemma}[{\thmcite{Lemma 4.1}{ChenCH}}] \label{l:blow_up_chen}
	Assume that $\Phi \in W^{1,2}_{loc}(\R^n)$ satisfies the equation
	\begin{equation*}
	\Delta \Phi = W'(\Phi) \text{ in } \R^n.
	\end{equation*}
	Then $\Phi \in C^3(\R^n), -1 \leq \Phi \leq 1$ in $\R^n,$ and
	\begin{equation*}
	\frac{1}{2}\abs{\nabla \Phi(x)}^2 \leq W(\Phi(x)) \ \forall x \in \R^n.
	\end{equation*}
\end{lemma}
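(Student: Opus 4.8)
The statement packages three assertions about \emph{entire} solutions of the stationary Allen--Cahn (Euler--Lagrange) equation $\Delta\Phi = W'(\Phi)$ with $W(s)=(1-s^2)^2$: interior regularity, the a priori bound $-1\le\Phi\le1$, and the Modica-type gradient estimate $\tfrac12\abs{\nabla\Phi}^2\le W(\Phi)$. The plan is to treat them in the order (i) boundedness, (ii) regularity, (iii) gradient bound, since each step feeds the next. The driving structural fact is that $W'(s)=4s(s^2-1)$, so $W'(s)>0$ for $s>1$ and $W'(s)<0$ for $s<-1$; this sign pattern outside the wells is the mechanism behind the bound.

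For the bound I would show $\Phi\le1$ (and symmetrically $\Phi\ge-1$) by a Caccioppoli/comparison argument that crucially uses that $\Phi$ is defined on all of $\R^n$. Testing the weak equation against $(\Phi-1)_+\zeta^2$ for a cutoff $\zeta$ and using $W'(\Phi)(\Phi-1)_+\ge0$ on $\{\Phi>1\}$ yields $\int_{B_R}\abs{\nabla(\Phi-1)_+}^2 \le C R^{-2}\int_{B_{2R}}(\Phi-1)_+^2$. The decisive point is that on $\{\Phi>1\}$ the nonlinearity is superlinear and satisfies the Keller--Osserman condition $\int^\infty ds/\sqrt{W(s)}<\infty$; comparing $\Phi$ with the explicit radial large (boundary-blow-up) subsolutions of $\Delta U=W'(U)$ on balls $B_R$ and letting $R\to\infty$ forces $\Phi\le1$, since otherwise the superlinear growth would prevent $\Phi$ from being finite on all of $\R^n$. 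This is the step where the hypothesis of an entire solution is genuinely used, and I expect it to be the most delicate point.

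Once $-1\le\Phi\le1$, regularity is routine: the right-hand side $W'(\Phi)$ is bounded, so from $\Phi\in W^{1,2}_{loc}$ the Calder\'on--Zygmund interior estimates give $\Phi\in W^{2,p}_{loc}$ for every $p<\infty$, hence $\Phi\in C^{1,\alpha}_{loc}$ by Sobolev embedding. Then $W'(\Phi)\in C^{0,\alpha}_{loc}$ (as $W'$ is smooth and $\Phi\in C^{0,\alpha}$), and Schauder estimates upgrade $\Phi$ to $C^{2,\alpha}_{loc}$; bootstrapping once more, now with $W'(\Phi)\in C^{1,\alpha}_{loc}$, gives $\Phi\in C^{3,\alpha}_{loc}\subset C^3(\R^n)$. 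Since $W$ is a polynomial this iteration actually produces $\Phi\in C^\infty$, which is more than required.

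With $\Phi$ a bounded classical solution, the gradient estimate follows from Modica's $P$-function argument. Setting $P:=\tfrac12\abs{\nabla\Phi}^2-W(\Phi)$, a Bochner computation combined with the equation gives the identity
\begin{equation*}
\Delta P = \abs{D^2\Phi}^2 - (W'(\Phi))^2 = \abs{D^2\Phi}^2 - (\Delta\Phi)^2 .
\end{equation*}
On the regular set $\{\nabla\Phi\neq0\}$ a refined (Kato-type) Cauchy--Schwarz inequality relating $\abs{D^2\Phi}^2$, $(\Delta\Phi)^2$ and $\nabla\abs{\nabla\Phi}$ shows that $P$ satisfies a differential inequality of the form $\Delta P\ge\langle b,\nabla P\rangle$ for a locally bounded vector field $b$, with no adverse zeroth-order term, so that $P$ is a subsolution of a homogeneous linear elliptic operator. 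The strong maximum principle then rules out an interior positive maximum unless $P$ is constant; combined with the boundedness of $\Phi$ and $\nabla\Phi$ and a Liouville-type argument, one concludes $P\le0$ throughout $\R^n$, which is precisely $\tfrac12\abs{\nabla\Phi}^2\le W(\Phi)$. \textbf{The analytic heart of the lemma is this combination of the $P$-function argument with the boundedness step}, the latter being where the global nature of the solution must be exploited.
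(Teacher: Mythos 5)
The paper does not actually prove this lemma: its ``proof'' is the single line ``See Lemma 4.1 in \cite{ChenCH}'', so the only substantive comparison is with the argument in that reference, and your outline follows essentially the same route as Chen's: the bound $-1\le\Phi\le1$ comes from the sign and superlinear (Keller--Osserman) structure of $W'$ outside $[-1,1]$, the $C^3$ regularity from a routine Calder\'on--Zygmund/Schauder bootstrap, and the gradient bound is precisely Modica's $P$-function estimate, which is what Chen's proof rests on. Your key identity $\Delta P=\abs{D^2\Phi}^2-(W'(\Phi))^2=\abs{D^2\Phi}^2-(\Delta\Phi)^2$ is correct, and your ordering (bound, then regularity, then gradient estimate) is the right one. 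Two remarks on where the real work is hidden, both at points you yourself flag as delicate: first, the Caccioppoli inequality you display cannot by itself force $(\Phi-1)_+\equiv0$ --- without growth control it is satisfied by exponentially growing functions --- so the Keller--Osserman comparison with boundary blow-up solutions carries the entire load, and since at that stage $\Phi$ is only $W^{1,2}_{loc}$ (possibly unbounded and discontinuous), the comparison must be justified by a weak comparison principle (testing the difference with its positive part, using monotonicity of $W'$ on $[1,\infty)$) or preceded by a local boundedness step; second, in the Modica argument the supremum of $P$ over $\R^n$ need not be attained, so the strong maximum principle alone does not conclude --- the standard fix, which is what your ``Liouville-type argument'' must amount to, is to translate the solution to points where $P$ nearly attains its supremum and use local $C^{2,\alpha}$ compactness to pass to an entire solution where it is attained. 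With those two points made precise, your proof is a faithful reconstruction of the cited argument.
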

\begin{proof}
	See Lemma 4.1 in \cite{ChenCH}.
\end{proof}

The outline of the proof of Proposition \ref{prop:control_discr_measure} now is as follows: In Lemma \ref{l:chen_bulk_control} we prove an estimate which allows us to control $E_\eps (\varphi_\eps)$ away from the interface between the regions $\lbrace \varphi=1\rbrace$ and $\lbrace \varphi=-1\rbrace.$ We will then introduce rescaled coordinates on the manifold $\G$ and prove Lemma \ref{l:chen_special_case} which gives a localized version of estimate \eqref{eq:control_discr_measure} in these coordinates under the assumption that $\tilde{\mu}_\eps$ is sufficiently small. We remark that here the careful choice of a suitable atlas of $\G$ is a delicate point. The proof of this lemma will be based on Lemma \ref{l:blow_up_chen}. Finally, it will be possible to combine the local results in Lemma \ref{l:chen_bulk_control} and Lemma \ref{l:chen_special_case} to derive estimate \eqref{eq:control_discr_measure} on the entire manifold $\G.$   

Hence we start with an estimate on $E_\eps (\varphi_\eps)$ in the regions away from the interface.
\begin{lemma}\label{l:chen_bulk_control}
  There exist positive constants $C_0>0$ and $\eta_0>0$ such that for every $\eta \in (0,\eta_0],$ every $0<\eps\leq 1$ and for every
every $\varphi_\eps \in H^2(\G)$ and
  \begin{equation*}
    \tilde{\mu}_\eps := -\Delta_\G \varphi_\eps +\eps^{-1} W'(\varphi_\eps)\qquad \text{on }\Gamma
  \end{equation*}
  the estimate
	\begin{align*}
	&\int_{\lbrace p\in\G\left| \abs{\varphi_\eps}\geq 1-\eta \rbrace\right.} \frac\eps2|\nabla\varphi_\eps(p)|^2 +
	\frac{1}{\eps}W(\varphi_\eps(p)) + \frac1\eps \left(W'(\varphi_\eps(p))\right)^2 \ d\mathcal{H}^2(p) \nonumber \\
	\leq& C_0\eta \int_{\lbrace p\in\G\left| \abs{\varphi_\eps}\leq 1-\eta\rbrace\right.} \eps\abs{\SG\varphi_\eps(p)}^2 \ d\mathcal{H}^2(p) + C_0\eps\int_\G |\tilde\mu_\eps(p)|^2 \ d\mathcal{H}^2(p)   
	\end{align*}
	holds.
\end{lemma}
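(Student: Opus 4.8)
The plan is to prove this as a purely elliptic (time\nobreakdash-independent) estimate: the hypothesis is only that $\varphi_\eps\in H^2(\G)$ with $\tilde\mu_\eps$ \emph{defined} by $\tilde\mu_\eps=-\eps\SL\varphi_\eps+\eps^{-1}W'(\varphi_\eps)$, so no use of the other equations or of the blow\nobreakdash-up Lemma \ref{l:blow_up_chen} is needed here (that tool enters only the sharper local Lemma used afterwards). Since $\dim\G=2$ we have $H^2(\G)\hookrightarrow C(\G)$, hence $\varphi_\eps$ is bounded, $W'(\varphi_\eps)\in H^1(\G)$, and all integrations by parts on the closed surface $\G$ are justified with no boundary terms. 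First I record the only facts about the double well $W(s)=(1-s^2)^2$ that are used: $W'(\pm1)=0$, $W''(\pm1)=8$, so there exist $\eta_0\in(0,1]$ and $c_0>0$ with $W''(s)\ge c_0$ for all $\abs{s}\ge 1-\eta_0$, together with the two\nobreakdash-sided equivalences $W(s)\asymp (W'(s))^2\asymp \operatorname{dist}(s,\{\pm1\})^2$ on the same range. Write $A_\eta:=\{\,\abs{\varphi_\eps}\ge 1-\eta\,\}$ and $B_\eta:=\{\,\abs{\varphi_\eps}\le 1-\eta\,\}$.

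The backbone is to test the defining identity against $W'(\varphi_\eps)$. Multiplying $-\eps\SL\varphi_\eps+\eps^{-1}W'(\varphi_\eps)=\tilde\mu_\eps$ by $W'(\varphi_\eps)$, integrating over $\G$, and using $\SG\big(W'(\varphi_\eps)\big)=W''(\varphi_\eps)\SG\varphi_\eps$ gives
\[
\eps^{-1}\int_\G \big(W'(\varphi_\eps)\big)^2 + \eps\int_\G W''(\varphi_\eps)\abs{\SG\varphi_\eps}^2 = \int_\G \tilde\mu_\eps\, W'(\varphi_\eps).
\]
I then split the middle term along $A_\eta$ and $B_\eta$, using $W''(\varphi_\eps)\ge c_0>0$ on $A_\eta$ and $\abs{W''(\varphi_\eps)}\le C$ on $B_\eta$, and absorb the right\nobreakdash-hand side by Young's inequality, $\int_\G\tilde\mu_\eps W'(\varphi_\eps)\le \tfrac12\eps^{-1}\int_\G (W'(\varphi_\eps))^2+\tfrac\eps2\int_\G\abs{\tilde\mu_\eps}^2$. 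This yields
\[
\tfrac12\,\eps^{-1}\!\int_\G \big(W'(\varphi_\eps)\big)^2 + c_0\,\eps\!\int_{A_\eta}\abs{\SG\varphi_\eps}^2 \le \tfrac\eps2\!\int_\G\abs{\tilde\mu_\eps}^2 + C\,\eps\!\int_{B_\eta}\abs{\SG\varphi_\eps}^2 .
\]
Finally the near\nobreakdash-well equivalence $W\le C\,(W')^2$ on $A_\eta$ converts the first term into control of $\tfrac1\eps\int_{A_\eta}W(\varphi_\eps)$, and the gradient term is already present, so I obtain the full left\nobreakdash-hand side of the claim bounded by $\tfrac\eps2\int_\G\abs{\tilde\mu_\eps}^2 + C\,\eps\int_{B_\eta}\abs{\SG\varphi_\eps}^2$. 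This already proves the estimate \emph{with a generic constant $C$} in front of the $B_\eta$\nobreakdash-gradient term, and all scalings match the statement.

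The genuine content, and the main obstacle, is upgrading that constant $C$ to the \emph{small} factor $C_0\eta$; a direct test with $W'(\varphi_\eps)$ cannot do this, since $W''$ is of order one on $B_\eta$. To extract the $\eta$ I would replace the test function by one localized to the wells: an odd Lipschitz function $\phi$ with $\phi(s)=0$ for $\abs{s}\le 1-\eta$ and $\phi'(s)\equiv 1$ for $\abs{s}\ge 1-\eta$, i.e. $\phi(s)=\operatorname{sign}(s)\,(\abs{s}-(1-\eta))_+$. Testing with $\phi(\varphi_\eps)$ produces the gradient energy $\eps\int_{A_\eta}\abs{\SG\varphi_\eps}^2$ \emph{supported on $A_\eta$ alone}, so the transition set $B_\eta$ now enters only through the potential cross\nobreakdash-term $\eps^{-1}\int_{\{1-\eta\le\abs{\varphi_\eps}\le1\}}W'(\varphi_\eps)\phi(\varphi_\eps)$, on the sliver where $\abs{\varphi_\eps\mp1}$ and $\abs{W'(\varphi_\eps)}$ are both $O(\eta)$; on the complementary overshoot $\{\abs{\varphi_\eps}\ge1\}$ the product $W'(\varphi_\eps)\phi(\varphi_\eps)$ has a favourable sign and is controlled as before. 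The delicate step is to estimate this sliver cross\nobreakdash-term, distributing it by Young's inequality between the $\tilde\mu_\eps$\nobreakdash-term and the transition\nobreakdash-layer gradient energy, the latter producing exactly the $C_0\eta\int_{B_\eta}\eps\abs{\SG\varphi_\eps}^2$ contribution; the factor $\eta$ originates from the $O(\eta)$ pointwise bounds on the sliver, and the care required is to avoid over\nobreakdash-counting the region where $\varphi_\eps$ is nearly $\pm1$ (where $\phi\approx\eta$ but $W'(\varphi_\eps)$ is comparatively small). This bookkeeping is precisely the step carried out in Chen's proof \cite{ChenCH} and which I would adapt here to the surface setting.
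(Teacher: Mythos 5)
Your first test (with $W'(\varphi_\eps)$) is carried out correctly, but, as you yourself note, it only yields the estimate with a constant $C$ in place of $C_0\eta$; the entire content of the lemma is the small factor, and there your proposal has a genuine gap. Your test function $\phi(s)=\operatorname{sign}(s)\,(\abs{s}-(1-\eta))_+$ vanishes identically on $B_\eta=\lbrace \abs{\varphi_\eps}\le 1-\eta\rbrace$, so the identity it produces,
\begin{equation*}
\eps\int_{A_\eta}\abs{\SG\varphi_\eps}^2\,d\mathcal{H}^2+\frac1\eps\int_{A_\eta}W'(\varphi_\eps)\,\phi(\varphi_\eps)\,d\mathcal{H}^2=\int_\G\tilde\mu_\eps\,\phi(\varphi_\eps)\,d\mathcal{H}^2,
\end{equation*}
contains no gradient term over $B_\eta$ whatsoever. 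Consequently there is no ``transition-layer gradient energy'' to which Young's inequality could distribute anything: the term $C_0\eta\int_{B_\eta}\eps\abs{\SG\varphi_\eps}^2$, which is what the right-hand side of the lemma allows, can never be generated from this identity. The sliver term is then out of control: on $S=\lbrace 1-\eta\le\abs{\varphi_\eps}<1\rbrace$ the product $W'(\varphi_\eps)\phi(\varphi_\eps)$ is nonpositive (wrong sign) and of size up to $C\eta^2$, so after moving it to the right you must bound $C\eta^2\eps^{-1}\abs{S}$, a pure measure term. This cannot be estimated by $C\eps\int_\G\abs{\tilde\mu_\eps}^2$: for $\varphi_\eps\equiv 1-\eta^2$ one has $\eta^2\eps^{-1}\abs{S}=\eta^2\eps^{-1}\abs{\G}$, while $\eps\int_\G\abs{\tilde\mu_\eps}^2\sim\eta^4\eps^{-1}\abs{\G}$ is smaller by a factor $\eta^2$; and the transition term vanishes for this example. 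Your closing appeal to Chen does not repair this, because the bookkeeping you describe is not the one in his proof.

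What Chen's Lemma 4.4 actually uses --- and this citation, together with the remark that on the closed surface $\G$ the same testing procedure runs without boundary terms, is the paper's entire proof --- is the odd continuous test function $\zeta$ with $\zeta=W'$ on $\lbrace\abs{s}\ge 1-\eta\rbrace$ and $\zeta$ \emph{linear} on $[-(1-\eta),1-\eta]$, i.e.\ $\zeta(s)=-4\eta(2-\eta)\,s$ there; keeping $\zeta$ alive on $B_\eta$ with small slope is exactly the point. Then (i) $\zeta'=W''\ge c_0>0$ on $A_\eta$ for $\eta\le\eta_0$; (ii) $\zeta'=-4\eta(2-\eta)$ on $B_\eta$ is negative but of size $O(\eta)$, and this slope --- not any pointwise bound on a sliver --- is the origin of the factor $C_0\eta$ in front of $\int_{B_\eta}\eps\abs{\SG\varphi_\eps}^2$; (iii) $W'(s)\zeta(s)\ge 0$ everywhere, equal to $(W')^2$ on $A_\eta$ (so no sign problem can arise near $\pm1$) and $\ge \zeta(s)^2$ on $B_\eta$ because $\abs{W'(s)}=4\abs{s}(1-s^2)\ge 4\eta(2-\eta)\abs{s}=\abs{\zeta(s)}$ there, so both Young terms coming from $\int_\G\tilde\mu_\eps\,\zeta(\varphi_\eps)$ can be absorbed into the left-hand side. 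Combined with your (correct) observations that $W\le C(W')^2$ on $A_\eta$ and that integration by parts on $\G$ produces no boundary terms, this yields the stated estimate; your version as written does not.
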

\begin{proof}
	The proof is given in \cite[Lemma 4.4]{ChenCH} in the case that $\Gamma$ is replaced by a bounded sufficiently smooth domain in $\R^n$ and $\varphi_\eps$ satisfies homogeneous Neumann boundary conditions. It relies on a clever testing procedure \eqref{eq:CH2}. Chen proves the desired estimate for an elliptic equation of the form of \eqref{eq:CH2} for any right-hand side in \eqref{eq:CH2}. Choosing $\tilde\mu_\eps$ as the right-hand side in \eqref{eq:CH2} one can easily prove the statement in our case by a simple adaptation of the proof of \cite[Lemma 4.4]{ChenCH}.
\end{proof}

\subsubsection{Local estimates on the discrepancy measure}

Since the manifold $\Gamma$ is compact, it is possible to cover it with a finite atlas. Thus the first step towards the proof of Proposition \ref{prop:control_discr_measure} is to prove that for functions $\varphi_\eps$ and  $\tilde\mu_\eps$ fulfilling the assumptions of the proposition a certain local version of the desired estimate on the discrepancy measure holds. 

We will first work under the assumption that $\tilde\mu_\eps$ is sufficient small before turning our attention to the cases in which $\tilde\mu_\eps$ is large. The argument requires us to carefully choose local coordinates.

As in Section \ref{sec:prelim}, we start again with normal coordinates induced by the exponential map $\exp_p$ around every point $p\in\G.$ By the compactness of $\G$, there is a real number $r>0$ such that for every $p\in\G$ the maps $\exp_p$ are diffeomorphisms from $B_r(0)$ onto the corresponding images.

Let now $R>2$ be arbitrary. Thus we can introduce the rescaled injectivity radius $\tilde r := \frac{r}{R}.$ Then the maps $\exp_p$ are still diffeomorphisms from $B_{\tilde r}(0) \subset B_{\frac{r}{2}}(0)$ onto a suitable neighborhood of $p\in\G.$ 
For $\eps \leq \frac{\tilde r}{R}=\frac{r}{R^2}$ we can choose a finite collection of points (possibly depending on the factor $R$) $\lbrace p_i \in \G \rbrace_{i=1}^{K(R)}$ such that the domains $\exp_{p_i}(B_{\eps}(0))$ cover the compact manifold. Then surely 
\[ \Big\{ \left.\exp_{p_i}\right|_{B_{\tilde r}(0)}(B_{\tilde r}(0)), \left.(\exp_{p_i})^{-1}\right|_{B_{\tilde r}(0)} \Big\}_{i=1}^{K(R)}\] 
is an atlas of $\G$ which covers $\G$ by even larger domains and has the technical advantage that after rescaling, it will be possible to cover $\G$ by images of the unit ball. 

If we denote the metric tensor by $g_{\exp}(\cdot,\cdot),$ we define $g_{\exp,ij}$ to be its entries with respect to these coordinates, i.e. $g_{\exp,ij}:=g_{\exp}(\partial_i,\partial_j)$ and let $\abs{g_{\exp}}:=\det\left( \left(g_{\exp,ij}\right)_{i,j=1}^n \right).$ Moreover, we denote the entries in the inverse $\left( \left(g_{\exp,ij}\right)_{i,j=1}^n\right)^{-1}$ by $g^{ij}_{\exp}.$

We now choose rescaled coordinates
\begin{align*}
B_{\frac{\tilde{r}}{\varepsilon}}(0) \rightarrow B_{\tilde{r}}(0), \ 
y \mapsto \varepsilon y.
\end{align*}
For $\varepsilon$ sufficiently small it is possible to choose $R>2$ such that $\varepsilon = \frac{\tilde{r}}{R}.$ We proceed by defining for all $y \in B_{\frac{\tilde r}{\varepsilon}}(0)=B_R(0)$ the functions
\begin{equation*}
F_i(y):=\varphi_\eps(\exp_{p_i}(\eps y))
\end{equation*}
and 
\begin{equation*}
M_i(y):=\eps \left(\mu_\eps(\exp_{p_i}(\eps y)) + \theta_\eps(\exp_{p_i}(\eps y))\right)
\end{equation*}
where $i = 1, \ldots ,K.$

The functions $F_i$ and $M_i$ fulfil for all $\omega \in H^1(B_R(0))$
\begin{align*}
\int_{B_{R}(0)} A(\eps y)\nabla F_i(y) \cdot \nabla \omega(y) + \sqrt{\abs{g_{\exp}(\eps y)}} &W'(F_i(y)) \omega(y) \ dy \\ &= \int_{B_{R}(0)} M_i(y) \omega(y) \sqrt{\abs{g_{\exp}(y)}} \ dy  
\end{align*}
by virtue of \eqref{eq:v} where \[A(x):=\left(a_{ij}(x)\right)_{i,j=1}^{n}:=\sqrt{\abs{g_{\exp}}}\left(g^{ij}_{\exp}\right)_{i,j=1}^{n}.\]  For $L$  defined by 
\[ Lu(y):= -\sum_{i,j} \partial_{y_i} \left(a_{ij}(\eps y)\partial_{y_j} u(\eps y) \right),\] 
$F_i$ and $M_i$ are thus weak solutions to 
\[ LF_i + \widetilde W'(F_i) = \widetilde M_i \] 
in $B_R(0)$ where $\widetilde W'(y,\varphi_\eps):= \sqrt{\abs{g_{\exp}(\eps y)}} W'(\varphi_\eps)$ and $\widetilde M_i(y) := M_i(y) \sqrt{\abs{g_{\exp}(\eps y)}}.$ For further simplification, we introduce the notation $A_\eps(y):=A(\eps y).$

Observe that the manifold $\G$ is assumed to be smooth and compact and the functions $g_{\exp,ij}$ and $g^{ij}_{\exp}$ are therefore at least locally Lipschitz. As a result, they are globally Lipschitz as well. We exploit this fact to deduce for later use the estimates
\begin{align}\label{eq:est_blow_up_A}
\norm{A_\eps(y)-\Id}_{C^0(B_R(0))}=\norm{A_\eps(y)-A(0)}_{C^0(B_R(0))} &\leq C\sup_{y \in B_R(0)}\abs{\eps y } \leq C \frac{r}{R^2} R \nonumber \\
&\leq C R^{-1}
\end{align} 
and
\begin{equation} \label{eq:est_blow_up_g}
\norm{\abs{g_{\exp}(\eps y)} - 1 }_{C^0(B_R(0))} = \norm{\abs{g_{\exp}(\eps y)} - \abs{g_{\exp}(0)}}_{C^0(B_R(0))} \leq C R^{-1}.
\end{equation}
The following lemma is then a first local estimate on the discrepancy measure for the rescaled functions $F_i$ and $M_i.$
\begin{lemma}\label{l:chen_special_case}
	For every $\eta>0$ there exist a positive constant $R(\eta)>2$ such that for every $R \geq R(\eta)$ and $F_i, M_i$ weak solutions to
	\begin{equation}\label{eq:chen_special_case}
	LF_i + \widetilde W'(F_i) = \widetilde M_i
	\end{equation}  
	in $B_R(0)$ as above with the additional assumption that 
	\begin{equation}\label{eq:general_elliptic_Op_chen_eq5}
	\norm{\widetilde M_i}_{L^2(B_R(0))}\leq C R^{-1}(\eta),
	\end{equation}
	the estimate
	\begin{align}\label{eq:second_step_chen}
	\int_{B_1} &\left( A(\eps x)\nabla F_i(x)\cdot\nabla F_i(x) - 2 \sqrt{\abs{g_{\exp}(\eps x)}} W(F_i(x)) \right)_+ \ dx \nonumber \\ 
	\leq& \eta \int_{B_2} A(\eps x)\nabla F_i(x)\cdot\nabla F_i(x) + \left[W'(F_i(x))^2 + W(F_i(x)) + \abs{M_i(x)}^2\right]\sqrt{\abs{g_{\exp}(\eps x)}} \ dx \nonumber \\ 
	&+ \int_{\lbrace x \in B_1 \left| \abs{F_i}\geq 1 - \eta\right.\rbrace} A(\eps x)\nabla F_i(x)\cdot\nabla F_i(x) \ dx 
	\end{align}
	holds.
	Moreover, $R(\eta)$ is independent of $F_i$ and $M_i.$ 
\end{lemma}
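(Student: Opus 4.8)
The plan is to argue by contradiction via a blow-up limit, reducing the statement to the rigidity result for entire solutions in Lemma \ref{l:blow_up_chen}. Suppose the lemma were false. Then there is a fixed $\eta>0$ and, for every threshold radius, a larger radius at which it fails; letting the threshold tend to infinity we obtain radii $R_k\to\infty$, the associated scales $\eps_k=\tilde r/R_k\to 0$, and weak solutions $F_k,M_k$ of \eqref{eq:chen_special_case} on $B_{R_k}(0)$ satisfying the smallness hypothesis \eqref{eq:general_elliptic_Op_chen_eq5} but violating \eqref{eq:second_step_chen}, i.e.
\[
\int_{B_1}\big(A(\eps_k x)\nabla F_k\cdot\nabla F_k-2\sqrt{\abs{g_{\exp}(\eps_k x)}}\,W(F_k)\big)_+\,dx > \eta\, D_k + B_k,
\]
where $D_k$ denotes the bracketed integral over $B_2$ on the right-hand side of \eqref{eq:second_step_chen} and $B_k:=\int_{B_1\cap\{\abs{F_k}\geq 1-\eta\}}A(\eps_k x)\nabla F_k\cdot\nabla F_k\,dx\geq 0$.

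First I would establish uniform a priori bounds independent of $k$. A truncation argument, testing the equation with $(\abs{F_k}-L)_+\operatorname{sgn}(F_k)$ for large $L$ and exploiting the cubic coercivity of $W'(s)=4s^3-4s$ against the small source $\widetilde M_k$, yields a uniform bound $\norm{F_k}_{L^\infty}\leq C_0$. Since the charts live in $\R^2$ and, by \eqref{eq:est_blow_up_A} and \eqref{eq:est_blow_up_g}, the coefficients satisfy $A(\eps_k\cdot)\to\Id$ and $\sqrt{\abs{g_{\exp}(\eps_k\cdot)}}\to 1$ uniformly on compact sets (so that $L$ is uniformly elliptic with Lipschitz coefficients), interior $L^2$-elliptic regularity gives uniform $W^{2,2}_{loc}$ bounds, hence uniform $C^{0,\alpha}_{loc}$ bounds by Sobolev embedding in two dimensions. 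In particular $D_k$ is bounded: the Dirichlet part is controlled by the $W^{1,2}$ bound, the $W'(F_k)^2$ and $W(F_k)$ parts by the $L^\infty$ bound, and $\int_{B_2}\abs{M_k}^2\to 0$ because $\widetilde M_k=M_k\sqrt{\abs{g_{\exp}(\eps_k\cdot)}}\to 0$ in $L^2$ by \eqref{eq:general_elliptic_Op_chen_eq5} while $\sqrt{\abs{g_{\exp}}}$ is bounded below.

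Passing to a subsequence, $F_k\to\Phi$ strongly in $W^{1,2}_{loc}(\Rn)$ and in $C^0_{loc}(\Rn)$, with $\Phi\in W^{1,2}_{loc}(\Rn)$. Letting $k\to\infty$ in the weak formulation of \eqref{eq:chen_special_case}, using the uniform coefficient convergence, the strong convergence of $\nabla F_k$ and of $W'(F_k)\to W'(\Phi)$, and $\widetilde M_k\to 0$, shows that $\Phi$ solves $\Delta\Phi=W'(\Phi)$ in $\Rn$. Lemma \ref{l:blow_up_chen} then gives $\tfrac12\abs{\nabla\Phi}^2\leq W(\Phi)$ pointwise, so the positive part of the limiting discrepancy vanishes and the left-hand side above converges to $\int_{B_1}\big(\abs{\nabla\Phi}^2-2W(\Phi)\big)_+\,dx=0$. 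If $D_\infty:=\lim_k D_k>0$ this already contradicts the violated inequality, whose right-hand side is bounded below by $\eta D_\infty>0$. If instead $D_\infty=0$, then $W(\Phi)\equiv 0$ forces $\Phi\equiv\pm1$ (a single constant by continuity), whence $\abs{F_k}\geq 1-\eta$ on all of $B_1$ for large $k$; but on that set $\big(A(\eps_k x)\nabla F_k\cdot\nabla F_k-2\sqrt{\abs{g_{\exp}}}W(F_k)\big)_+\leq A(\eps_k x)\nabla F_k\cdot\nabla F_k$, so the left-hand side is at most $B_k$, again contradicting the strict inequality. Either way we reach a contradiction, which proves the lemma; the independence of $R(\eta)$ from $F_i,M_i$ is automatic, as only the structural hypotheses were used.

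The main obstacle is the compactness step in the curved setting: one must secure the uniform $L^\infty$ and $W^{2,2}_{loc}$ bounds with constants independent of the base point $p_i$ and of $k$, which is why the atlas built from the exponential map with the rescaled injectivity radius is chosen so carefully, and one must upgrade weak to strong $W^{1,2}_{loc}$ convergence in order to pass to the limit both in the nonlinear term $W'(F_k)$ and in the discrepancy density itself. The variable metric factors $A(\eps_k\cdot)$ and $\sqrt{\abs{g_{\exp}(\eps_k\cdot)}}$, absent in Chen's flat problem, enter only through the uniform estimates \eqref{eq:est_blow_up_A} and \eqref{eq:est_blow_up_g}, which guarantee that the blow-up equation degenerates exactly to the constant-coefficient problem $\Delta\Phi=W'(\Phi)$ covered by Lemma \ref{l:blow_up_chen}.
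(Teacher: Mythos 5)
Your proof is correct in substance, but it takes a genuinely different route from the paper's, and the difference is worth spelling out. The paper first splits $B_1$ along the transition set $B_1^\eta:=\lbrace x\in B_1 : \abs{F_i}\leq 1-\eta\rbrace$ and distinguishes two cases according to whether $\abs{B_1^\eta}\leq\eta^m$ or not. When the transition set is small, the discrepancy is estimated \emph{directly}: H\"older, the embedding $W^{1,2}\hookrightarrow L^q$, and interior elliptic estimates give a bound of order $\eta^2$ times the right-hand side of \eqref{eq:second_step_chen}, with no compactness argument at all. Only when $\abs{B_1^\eta}>\eta^m$ does the paper run the blow-up, and there the contradiction comes from the fact that the measure lower bound persists under a.e.\ convergence, so the right-hand side stays uniformly positive in the limit,
\begin{equation*}
\eta\int_{B_1}\left(\abs{\nabla F}^2+W(F)\right)\,dx \;\geq\; \eta\,\eta^m \min_{s\in[-1+\eta,1-\eta]}W(s) \;>\;0,
\end{equation*}
while the left-hand side tends to zero by Lemma \ref{l:blow_up_chen}. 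You instead run a single contradiction argument and move the case distinction to the limit: if $D_\infty>0$ you contradict exactly as the paper does; if $D_\infty=0$ you observe that the blow-up limit is a constant $\pm1$, hence $\abs{F_k}\geq 1-\eta$ on all of $B_1$ eventually, so that by $(a-b)_+\leq a$ for $a,b\geq0$ the left-hand side of \eqref{eq:second_step_chen} is dominated by the third right-hand term $B_k$, contradicting the assumed strict violation. This branch is precisely what replaces the paper's small-transition-set case, and it is sound because $D_k$ is bounded, so a convergent subsequence exists. What your route buys is a more unified argument that dispenses with the exponent bookkeeping ($m$, $q$) and the separate quantitative estimate; what it gives up is exactly that quantitative content, since the paper's first case yields an explicit constant without compactness, whereas your argument is non-constructive in both branches. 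One caveat: your uniform $L^\infty$ bound ``by truncation'' is stated too casually --- Stampacchia/De Giorgi truncation needs a starting local $L^2$ (or $W^{1,2}$) bound, which is obtained, as in the paper, by testing with $\zeta^k F_k$ and using the coercivity of $sW'(s)$. In fact the $L^\infty$ bound is dispensable altogether: the paper never proves one, since in two dimensions $W^{1,2}_{loc}\hookrightarrow L^6_{loc}$ already bounds $W'(F_k)$ in $L^2_{loc}$, which is all the elliptic bootstrap to $W^{2,2}_{loc}$ (and hence $C^{0,\alpha}_{loc}$ compactness and strong $W^{1,2}_{loc}$ convergence) requires; with that adjustment your compactness step coincides with the paper's and the rest of your argument goes through.
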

\begin{proof}
	Let $B^\eta_1$ be given by $B^\eta_1 := \lbrace x \in B_1(0) \left|\abs{F_i}\leq1-\eta\right.\rbrace.$ Since
	\begin{align*}
	\int_{B_1(0)} &\left( A_\eps(x)\nabla F_i(x)\cdot\nabla F_i(x) - 2 \widetilde W(F_i(x)) \right)_+ \ dx \nonumber \\  
	=& \int_{B^\eta_1} \left( A_\eps(x)\nabla F_i(x)\cdot\nabla F_i(x) - 2 \widetilde W(F_i(x)) \right)_+ \ dx \nonumber \\
	&+ \int_{B_1(0)\setminus B^\eta_1} \left( A_\eps(x)\nabla F_i(x)\cdot\nabla F_i(x) - 2 \widetilde W(F_i(x)) \right)_+ \ dx\nonumber \\ 
	\leq& \int_{B^\eta_1} \left( A_\eps(x)\nabla F_i(x)\cdot\nabla F_i(x) - 2 \widetilde W(F_i(x)) \right)_+ \ dx \nonumber \\
	&+ \int_{\left(B^\eta_1\right)^c} A_\eps(x)\nabla F_i(x)\cdot\nabla F_i(x) \ dx
	\end{align*}
	it is then sufficient to estimate the integral over $B^\eta_1$ in order to prove the lemma.
	We distinguish the two cases
	\[ (1) \ \abs{B^\eta_1} \leq \eta^m \text{ and } (2) \ \abs{B^\eta_1} > \eta^m\]
	where $m:=\frac{2q}{q-2}$ and $q=\frac{2n}{n-2}$ for $n>2$ and $q=7$ else.
	
	Let us first consider the case $\abs{B^\eta_1} \leq \eta^m$.
	Since $W(F_i(x))$ is non-negative for all $x \in \G$, it is enough to estimate $A\nabla F_i(x)\cdot\nabla F_i(x)$ over $B^\eta_1$. To this end observe that by the compactness of $\Gamma$ we can find an upper bound on all entries in $A$ such that
	\begin{equation}\label{eq:equiv_L2-norms}
	\int_{B^\eta_1(0)} A_\eps(x)\nabla F_i(x)\cdot\nabla F_i(x) \ dx \leq C \norm{\nabla F_i}^2_{L^2(B^\eta_1(0))}.
	\end{equation}
	We thus simplify the task at hand by proving an estimate for $\norm{\nabla F_i}_{L^2(B^\eta_1(0))}.$
	
	Using Young's inequality and the Sobolev embedding $W^{1,2}(B_1(0)) \hookrightarrow L^q(B_1(0))$ we deduce
	\begin{align*}
	\norm{\nabla F_i}_{L^2(B^\eta_1)} \leq \abs{B^\eta_1}^{\frac{q-2}{2q}}\norm{\nabla F_i}_{L^q(B^\eta_1)} \leq C \eta^{\frac{m}{m}}\norm{\nabla F_i}_{W^{1,2}(B_1(0))}. 
	\end{align*}
	A standard elliptic estimate (cf. \cite[Theorem 8.8, Theorem 8.12]{GT}) yields 
	\begin{align*}
	&\norm{\nabla F_i}^2_{W^{1,2}(B_1(0))} \leq C \left( \norm{L F_i}^2_{L^2(B_2(0)}+\norm{\nabla F_i}^2_{L^2(B_2(0))}\right) \\
	&\leq C \left( \norm{\sqrt{\abs{g_{\exp}}} M_i}^2_{L^2(B_2(0)} + \norm{\sqrt{\abs{g_{\exp}}} W'(F_i)}^2_{L^2(B_2(0)} \right. \\ &\left.\qquad \qquad \qquad \qquad \qquad \qquad \qquad \qquad +\int_{B_2(0)} A_\eps(x)\nabla F_i(x)\cdot \nabla F_i(x) \ dx \right)
	\end{align*}
	where we have used the ellipticity of $A_\eps$ and that $\sqrt{\abs{g_{\exp}}}$ is bounded from below by the compactness of $\Gamma.$
	Note that we only need the $L^2$-norm of the gradient of $F_i$ on the right hand-side since the operator $L$ does not contain terms of lower order, compare also \cite[Proof of Theorem 1, \S 6.3.1]{LCE}.
	
	Together these estimates imply
	\begin{align*}
	\norm{\nabla F_i}^2_{L^2(B^\eta_1)} \leq C\eta^2 &\left( \norm{\sqrt{\abs{g_{\exp}}} M_i}^2_{L^2(B_2(0))} \right. \\
	&\left.+ \norm{\sqrt{\abs{g_{\exp}}} W'(F_i)}^2_{L^2(B_2(0)} + \int_{B_2(0)} A_\eps(x)\nabla F_i(x)\cdot \nabla F_i(x) \ dx \right)
	\end{align*}
	and thus we infer from \eqref{eq:equiv_L2-norms}
	\begin{align*}
	\int_{B_1(0)} A_\eps \nabla F_i \cdot \nabla F_i \ dx \leq C \eta^2 \int_{B_2(0)} &\left[\abs{M_i}^2 + \left(W'(F_i)\right)^2\right] \sqrt{\abs{g_{\exp}}} +  A_\eps\nabla F_i \cdot \nabla F_i \ dx \nonumber \\ &+ \int_{\left(B^\eta_1\right)^c} A_\eps \nabla F_i \cdot \nabla F_i \ dx.
	\end{align*}
	It remains to prove the estimate in the second case, namely if $\abs{B^\eta_1} \geq \eta^m.$
	To this end, we assume that the assertion of the lemma is false and proceed by contradiction. We assume that for each $j \in \N$ there exist functions $F_i^j$ and $M_i^j$, a ball $B_j$ and let $L_j$ be the local form of the Laplace-Beltrami operator with respect to the coordinates introduced above for $R=j.$ We suppose in the following that for these functions $F_i^j$ and $M_i^j$ together with the ball $B_j$, the estimate \eqref{eq:second_step_chen} is wrong. In particular, our assumptions imply that the Matrix $A_j$ associated with the operator $L_j$ fulfils 
	\begin{equation}\label{eq:est_blow_up_A_rescaled}
	\norm{A_j( y)-\Id}_{C^0(B_j(0))} \leq C j^{-1},
	\end{equation} in accordance with \eqref{eq:est_blow_up_A}.
	
	Let now $\kappa>0$ and $\zeta$ be a smooth cut-off function with $0\leq \zeta \leq 1$ such that $\zeta \equiv 1$ on $B_{\kappa}(0)$ and $\zeta \equiv 0$ outside of $B_{2\kappa}(0).$ Moreover, let $k=\frac{2q}{q-2}.$ After multiplying with $\zeta^k F^j_i$ and integrating over $B_j,$ equation \eqref{eq:chen_special_case} reads
	\begin{align*}
	0 &= \int_{B_j} \left( A_j \nabla F^j_i \right) \cdot \nabla\left( \zeta^k F^j_i \right) + \widetilde W'(F^j_i)\zeta^kF^j_i - \widetilde M_i^j \zeta^kF^j_i \ dx
	\end{align*}
	or, equivalently,
	\begin{align}\label{eq:general_elliptic_Op_chen_step2}
	\int_{B_j} \zeta^k\left(A_j \nabla F^j_i \right)\cdot \nabla F^j_i &+ \widetilde W'(F^j_i)\zeta^k F^j_i \ dx \nonumber \\ &= \int_{B_j} - k\zeta^{k-1}F^j_i\left( A_j \nabla F^j_i\right)\cdot \nabla \zeta + \widetilde M_i^j \zeta^k F^j_i \ dx 
	\end{align}
	The summand $\int_{B_j} k\zeta^{k-1}F^j_i\left( A_j \nabla F^j_i\right)\cdot \nabla \zeta \ dx$ can be estimated with the help of Young's inequality. Choosing $\tilde{q}=\frac{q}{2}$ and $\tilde{q}'=\frac{q}{q-2}$,  we thereby obtain
	\begin{align*}
	&\int_{B_j} k\zeta^{k-1}F^j_i\left( A_j \nabla F^j_i\right)\cdot \nabla \zeta \ dx \\
	=& \int_{B_j} \left( \zeta^{k/2} (A_j)^{1/2} \nabla F^j_i \right) \cdot \left( k \zeta^{k/2 - 1}F^j_i (A_j)^{1/2}\nabla \zeta \right) \ dx \\ 
	\geq& - \frac{1}{2}\int_{B_j} \left[ \zeta^k \left( A_j\nabla F^j_i\right)\cdot\nabla F^j_i + k^2\zeta^{k-2}(F^j_i)^2\left(A_j \nabla\zeta\right)\cdot \nabla \zeta \right] \ dx\\
	\geq& -\frac{1}{2} \int_{B_j} \left[ \zeta^k\left(A_j\nabla F^j_i\right)\cdot\nabla F^j_i + \delta \left(k\zeta^{k-2}(F^j_i)^2\right)^{\tilde{q}} + C_\delta \left( k(A_j\nabla \zeta)\cdot\nabla \zeta \right)^{\tilde{q}'} \right] \ dx.
	\end{align*}
	In the same way, we can estimate the integral $\int_{B_j} \widetilde M^j_i \zeta^k F^j_i \ dx$ if we use Young's inequality to deduce
	\begin{align*}
	\int_{B_j} \widetilde M^j_i \zeta^k F^j_i \ dx &\leq  \frac{1}{2} \int_{B_j} \zeta^k (\widetilde M^j_i)^2 + \zeta^k (F^j_i)^2 \ dx \\
	&= \frac{1}{2} \int_{B_j} \zeta^k (\widetilde M^j_i)^2 + \zeta^{k-2}(F^j_i)^2\zeta^2 \ dx \\
	&\leq \frac{1}{2} \left[ \int_{B_j} (\widetilde M^j_i)^2 \ dx + \delta \int_{B_j} \zeta^k (F^j_i)^q \ dx + C_\delta \int_{B_j} \zeta^k \ dx \right],
	\end{align*}
	where we have chosen the pair $\tilde{q}$ and $\tilde{q}'$ from above as the exponents in the second application of Young's inequality. Now recall that $rW'(r) \geq c_1\abs{r}^q-c_2$ and choose $\delta = \frac{c_1}{2k^{q/2}}.$ Using the last two inequalities, \eqref{eq:general_elliptic_Op_chen_step2} becomes
	\begin{equation*}
	\int_{B_j} \zeta^k\left((A_j\nabla F^j_i)\cdot \nabla F^j_i + \abs{F^j_i}^q \right) \leq C\left(c_1,c_2,q,\norm{\widetilde M^j_i}_{L^2(B_j)},\norm{\nabla\zeta}_{L^k(B_j)}\right).
	\end{equation*}
	Since the $A_j$ are uniformly elliptic and since the sequence $\lbrace \widetilde M^j_i\rbrace_{j\in\N}\subset L^2(B_r(0)\cap B_j)$ is bounded by assumption, this estimate yields for any $\kappa > 0$
	\begin{equation*}
	\norm{F^j_i}_{W^{1,2}(B_\kappa(0)\cap B_j)} \leq C = C(\kappa).
	\end{equation*}
	Using Sobolev embeddings, we hence find that $W'(F^j_i)$ is bounded in $L^2(B_\kappa(0)\cap B_j)$ and by elliptic theory (see again \cite{GT}) we deduce for any $\kappa' < \kappa$
	\begin{equation*}
	\norm{F^j_i}_{W^{2,2}(B_{\kappa'}(0)\cap B_j)} + \norm{W'(F^j_i)}_{L^2(B_{\kappa'}(0)\cap B_j)} \leq C = C(\kappa).  
	\end{equation*}
	Since $\kappa$ was arbitrary, we can write $\kappa$ instead of $\kappa'$ in the estimate above.
	
	We are now interested in the limit behavior of the tuple $(F^j_i,M^j_i)$ as $j \rightarrow \infty.$ By the previous estimates and the general assumptions in the statement of the lemma, we can deduce the existence of a subsequence $\lbrace j_k \rbrace_{k\in\N}$ with $j_k \rightarrow \infty$ such that the following convergences hold for any $\kappa>0:$
	\begin{enumerate}[label=(\roman*)]
		\item{\label{item:general_elliptic_Op_chen2} $\widetilde M_i^{j_k} \rightarrow 0$ in $L^2(B_\kappa(0))$ by \eqref{eq:general_elliptic_Op_chen_eq5}.}
		\item{\label{item:general_elliptic_Op_chen3} $F_i^{j_k} \rightarrow F$ in $W^{1,2}(B_\kappa(0))$ and for $0<\alpha<\frac{1}{2}$ in $C^{0,\alpha}(B_\kappa(0))$ for some $F \in W^{2,2}_{loc}(\R^n)$ by the compact Sobolev embedding $W^{2,2}(B_\kappa(0)) \hookrightarrow W^{1,2}(B_\kappa(0))$ or by the compact embedding $W^{2,2}(B_\kappa(0)) \hookrightarrow C^{0,\alpha}(B_\kappa(0))$ respectively.}
		\item{\label{item:general_elliptic_Op_chen4} $W'(F_i^{j_k}) \rightarrow W'(F)$ in $L^q(B_\kappa(0))$ for $q \in [1,2).$}
		\item{\label{item:general_elliptic_Op_chen5}$W(F_i^{j_k}) \rightarrow W(F)$ in $L^1(B_\kappa(0)).$}
	\end{enumerate}
	By the dominated convergence theorem and estimate \eqref{eq:est_blow_up_g}, \ref{item:general_elliptic_Op_chen4} implies
	\[ \int_{B_\kappa(0)} \sqrt{\abs{g_{\exp}}}W'(F_i^{j_k}) \omega \ dx \rightarrow \int_{B_\kappa(0)} W'(F) \omega \ dx. \]
	At the same time, estimate \eqref{eq:est_blow_up_A_rescaled} yields
	\begin{align*}
	&\abs{\int_{B_\kappa(0)} A_{j_k}(x)\nabla F_i^{j_k}(x)\cdot\nabla\omega(x) - \nabla F(x)\cdot\nabla\omega(x) \ dx} \\
	\leq& \int_{B_\kappa(0)} \abs{A_{j_k}(x)\nabla F_i^{j_k}(x)\cdot\nabla\omega(x) - \nabla F_i^{j_k}(x)\cdot\nabla\omega(x)} + \abs{\nabla F_i^{j_k}(x)\cdot\nabla\omega(x)-\nabla F(x)\cdot\nabla\omega(x)} \ dx \\
	\leq& \norm{A_{j_k} - \Id}_{L^\infty(B_\kappa(0))} \int_{B_\kappa(0)} \nabla F_i^{j_k}(x)\cdot\nabla\omega(x) \ dx + \int_{B_\kappa(0)}\abs{\nabla F_i^{j_k}(x)-\nabla F(x)}\abs{\nabla\omega(x)} \ dx
	\end{align*}
	and thus 
	\begin{align*}
	\abs{\int_{B_\kappa(0)} A_{j_k}(x)\nabla F_i^{j_k}(x)\cdot\nabla\omega(x) - \nabla F(x)\cdot\nabla\omega(x) \ dx} \rightarrow 0
	\end{align*}
	by the convergence of $F_i^{j_k}$ in \ref{item:general_elliptic_Op_chen3}.
	
	Given that $F_i^{j_k}$ and $\widetilde M_i^{j_k}$ fulfil
	\begin{align*}
	\int_{B_\kappa(0)} A_{j_k}(x)\nabla F_i^{j_k}(x)\cdot\nabla\omega(x) \ dx + \int_{B_\kappa(0)}& \widetilde W'(F_i^{j_k}(x))\omega(x) \  dx \\ 
	&= \int_{B_\kappa(0)} M_i^{j_k}(x) \omega(x) \ dx
	\end{align*}
	for each test function $\omega \in W^{1,2}(B_\kappa(0)),$ the above convergences together with $\ref{item:general_elliptic_Op_chen2}$ imply that the limit function $F$ fulfils  
	\[ -\Delta F + W'(F) = 0 \]
	weakly in $W^{1,2}(B_\kappa(0))$ for all $\kappa > 0.$ 

	As a result, we can apply Lemma \ref{l:blow_up_chen} which gives 
	\begin{equation*}
	\lim_{k\rightarrow\infty} \int_{B_1(0)} \left( A_{j_k} \nabla F_i^{j_k}\cdot \nabla F_i^{j_k} - 2 W(F_i^{j_k})\right)_+ = \int_{B_1(0)} \left(  \abs{\nabla F}^2 - 2 W(F)\right)_+ = 0
	\end{equation*}
	Hence the left hand side in \eqref{eq:second_step_chen} is non-positive in the limit $k\rightarrow\infty.$ 
	On the other hand, we assumed 
	\[ \abs{ \left\lbrace x \in B_1(0): \abs{F_i^{j_k}} \leq 1-\eta \right\rbrace} \geq \eta^m. \]
	and since $F_i^{j_k} \rightarrow F$ a.e. this also implies
	\[ \abs{ \left\lbrace x \in B_1(0): \abs{F} \leq 1-\eta \right\rbrace} \geq \eta^m. \]
	By the convergence results in \ref{item:general_elliptic_Op_chen2}--\ref{item:general_elliptic_Op_chen5} we hence find
	\begin{align*}
	\lim_{k\rightarrow\infty} &\eta \int_{B_1(0)} A_{j_k} \nabla F_i^{j_k}\cdot \nabla F_i^{j_k} + \widetilde W(F_i^{j_k}) \ dx = \eta \int_{B_1(0)} \abs{ \nabla F}^2 + W(F) \ dx \\
	&\geq \eta \int_{\lbrace x \in B_1(0) \left| \abs{F}\leq 1-\eta\rbrace\right.} W(F) \geq \eta \eta^m \min_{s\in\left[-1+\eta,1-\eta\right]} W(s)
	\end{align*}
	and thus the right hand side of \eqref{eq:second_step_chen} is uniformly positive in $k,$ in contradiction to the assumption that the converse is true. Thus the assertion of the lemma is proved.
\end{proof}

\subsubsection{Proof of Proposition \ref{prop:control_discr_measure}}
Using Lemma \ref{l:chen_bulk_control} and \ref{l:chen_special_case} we can now proceed with the proof of Proposition \ref{prop:control_discr_measure}.

As before, we consider again normal coordinates induced by a suitable rescaling of the exponential maps $\exp_p$ around every point $p\in\G.$ 
Again we denote by $r$ the injectivity radius, which is uniform on $\G$ since $\G$ is compact. Let $\eta$ be any fixed small positive constant.  Let $R(\eta)>2$ be the constant from Lemma \ref{l:chen_special_case} such that estimate \eqref{eq:second_step_chen} holds. For $\eps$ small enough choose $R > R(\eta)$ such that $\eps = \frac{r}{R^2}.$ With the same construction as before, we obtain again an atlas for the manifold $\G$ that scales with $\eps.$ 
We denote it by
\[ \Big\{ \left.\exp^\eps_{p_i}\right|_{B_{R}(0)}(B_{R}(0)), \left.(\exp^\eps_{p_i})^{-1}\right|_{B_{R}(0)} \Big\}_{i=1}^{K(r)}.\] 

\begin{remark}\label{rem:covering}
	Note that $B_2(0) \subset B_R(0)$ and that the points $p_i$ where chosen in such a way that the rescaled atlas covers $\G$ even if one restricts the charts to $B_1(0)\subset B_R(0).$	
	Moreover, we point out that by the covering theorem \cite[Theorem 2.8.14]{HF} and Remark 2.4.8 therein for each $p \in \Gamma$ the number \[ \# \left\lbrace i \in 1,\ldots,K(r) \left| p \in \left.\exp^\eps_{p_i}\right|_{B_{R}(0)}(B_{R}(0)) \right.\right\rbrace\] is bounded by some constant $C(\G)$ which only depends on the dimension of $\G$ and is in particular independent of $\eps.$	
\end{remark}
With respect to this atlas,  the localized functions $F_i, M_i$ defined as before fulfil 
\begin{align*}
\int_{B_{R}(0)} A(\eps y)\nabla F_i(y) \cdot \nabla \omega(y) + \sqrt{\abs{g_{\exp}(\eps y)}} &W'(F_i(y)) \omega(y) \ dy \\ &= \int_{B_{R}(0)} M_i(y) \omega(y) \sqrt{\abs{g_{\exp}(y)}} \ dy  
\end{align*}
for all $\omega \in H^1(B_R(0))$ and are thus weak solutions to 
\[
  LF_i + \widetilde W'(F_i) = \widetilde M_i \qquad \text{in } B_R(0).
\]

Restricting ourselves for the moment to the set of all $i\in \mathcal{I}_1 \subset \lbrace 1\ldots K\rbrace$ given by 
\begin{equation}\label{eq:cond_A}
\mathcal{I}_1 := \left\lbrace i \in \lbrace 1\ldots K\rbrace \left| \norm{\tilde\mu_\eps}_{L^2(\exp_{p_i}(B_{\tilde r}(0))\cap\Gamma)} \leq \eps^{\frac{n}{2}-1} R^{-1} \right. \right\rbrace
\end{equation}
we can thus apply Lemma \ref{l:chen_special_case} since \eqref{eq:cond_A} implies $\norm{\widetilde M_i}_{L^2(B_R(0))} \leq R^{-1}.$

Hence we have
\begin{align*}
\int_{B_1} &\left( A\nabla F_i(x)\cdot\nabla F_i(x) - 2 \sqrt{\abs{g_{\exp}}} W(F_i(x)) \right)_+ \ dx \nonumber \\ 
\leq& \eta \int_{B_2} A\nabla F_i(x)\cdot\nabla F_i(x) + \left[W'(F_i(x))^2 + W(F_i(x)) + \abs{M_i(x)}^2\right]\sqrt{\abs{g_{\exp}}} \ dx \nonumber \\ 
&+ \int_{\lbrace x \in B_1 \left| \abs{\varphi}\geq 1 - \eta\right.\rbrace} A\nabla F_i(x)\cdot\nabla F_i(x) \ dx. 
\end{align*}
Keeping in mind that by the compactness of $\G$ both $\abs{g_{\exp}}$ and $\abs{g_{\exp}}^{-1}$ are bounded, transferring back to $\varphi_\eps$ and $\tilde\mu_\eps$ on $\G$ leads to 
\begin{align*}
&\int_{\exp^\eps_{p_i}(B_1(0))} \left(\xi^\eps(\varphi_\eps) \right)_+ \ d\mathcal{H}^2 \\ \leq& \eta C  \int_{\exp^\eps_{p_i}(B_2(0))} \left( \frac\eps2|\nabla_\G \varphi_\eps|^2 +  \eps^{-1}W(\varphi_\eps) + \frac{1}{\eps} W'(\varphi_\eps)^2 + \eps \abs{\tilde\mu_\eps}^2 \right) \ d\mathcal{H}^2 \\   &+C\int_{\exp^\eps_{p_i}(B_1(0))\cap\lbrace p\in\G \left| \abs{\varphi_\eps} \geq 1-\eta \right. \rbrace} \eps \abs{\nabla_\G \varphi_\eps}^2 \d \mathcal{H}^2.
\end{align*} 	
Taking the sum over all $i\in\mathcal{I}_1$ yields
\begin{align*}
&\sum_{i \in \mathcal{I}_1} \int_{\exp^\eps_{p_i}(B_1(0))} \left(\xi^\eps(\varphi_\eps) \right)_+ \ d\mathcal{H}^2 \\ &\leq C(\G) \eta \int_{\G} \frac\eps2|\nabla_\G \varphi_\eps|^2 +  \eps^{-1}W(\varphi_\eps) + \frac{1}{\eps} W'(\varphi_\eps)^2 + \eps \abs{\tilde\mu_\eps}^2 \ d\mathcal{H}^2 \\  &+C(\G)\int_{\lbrace p\in\G \left| \abs{\varphi_\eps}\geq 1-\eta \right. \rbrace} \eps \abs{\nabla_\G \varphi_\eps}^2 \d \mathcal{H}^2,
\end{align*}
where the constant $C(\G)$ incorporates the constant from Remark \ref{rem:covering} in order to account for the possible overlap between the regions $\exp^\eps_{p_i}(B_1(0)).$

Finally, using the estimates away from the interface in Lemma \ref{l:chen_bulk_control} provides 
\begin{align}\label{eq:chen_small_right_hand_side_final}
&\sum_{i \in \mathcal{I}_1} \int_{\exp^\eps_{p_i}(B_1(0))} \left(\xi^\eps(\varphi_\eps) \right)_+ \ d\mathcal{H}^2 \nonumber \\ &\leq C\eta \int_{\G} \frac\eps2|\nabla_\G \varphi_\eps|^2 +  \eps^{-1}W(\varphi_\eps) \ d\mathcal{H}^2 + C\eps\int_{\G}  \abs{\tilde\mu_\eps}^2 \ d\mathcal{H}^2	
\end{align}
if one takes into account that $(W'(r))^2 \leq C W(r)$ whenever $\abs{r}\leq 1.$

In order to complete the proof, let us now denote $\mathcal{I}_2 := \lbrace 1,\ldots,K\rbrace \setminus \mathcal{I}_1$ and introduce \[ G^{-1} = \left(g_{\exp}^{\eps,ij}\right)_{i,j\in\lbrace 1,2 \rbrace}.  \] We observe that by interior regularity results for elliptic equations (compare \cite[Theorem 8.8, Theorem 8.12]{GT}) we have that
\begin{align*}
\int_{B_1(0)} \nabla F_i \cdot G^{-1}\nabla F_i &\sqrt{\abs{g^{\eps}_{\exp}}} \ dx \leq C \int_{B_2(0)} \left( W'(F_i)^2 + \abs{M_i}^2+\abs{F_i}^2\right) \sqrt{\abs{g^{\eps}_{\exp}}} \ dx \\
&\leq C\abs{B_1(0)} + C \int_{B_2(0)} \left( \abs{M_i}^2 + W'(F_i)^2\chi_{\lbrace \abs{F_i}\geq 1 \rbrace} \right) \sqrt{\abs{g^{\eps}_{\exp}}} \ dx
\end{align*} 
since the $F_i$ solve 
\[  LF_i + \widetilde W'(F_i) = \widetilde M_i \]
and $W'(r)$ is bounded for $\abs{r}\leq 1.$

As before, transferring back to $\varphi_\eps$ and $\tilde\mu_\eps$ on $\G$ and the fact that $\G$ is compact allows us to deduce 
\begin{align*}
\int_{\exp^\eps_{p_i}(B_1(0))} \eps \abs{\nabla_\G \varphi_\eps}^2 \leq &C\eps \int_{\exp^\eps_{p_i}(B_1(0))} |\tilde\mu_\eps|^2 \ d\mathcal{H}^2 \\ &+ C\eps^{-1}\int_{\lbrace \exp_{p_i}(B_1(0)) \left| \abs{\varphi_\eps}\geq 1 \right. \rbrace} W'(\varphi_\eps)^2 \ d\mathcal{H}^2 + C\eps^{-1} \abs{\exp^\eps_{p_i}(B_1(0))}. 
\end{align*} 
Again, we take the sum over all $i\in\mathcal{I}_2$ and allow for a constant $C(\G)$ due to possible overlap, see Remark \ref{rem:covering}. This procedure implies
\begin{align*}
\sum_{i\in\mathcal{I}_2} \int_{\exp^\eps_{p_i}(B_1(0))} \eps \abs{\nabla_\G \varphi_\eps}^2 \leq& C(\G)\eps \int_{\G} |\tilde\mu_\eps|^2 \ d\mathcal{H}^2 \\ &+ C(\G)\eps^{-1}\int_{\lbrace p\in\G \left| \abs{\varphi_\eps}\geq 1 \right. \rbrace} W'(\varphi_\eps)^2 \ d\mathcal{H}^2+ C\eps^{-1}\sum_{i\in\mathcal{I}_2} \abs{\exp^\eps_{p_i}(B_1(0))}. 
\end{align*}
Similar to the proof of Lemma \ref{l:chen_bulk_control}, we multiply \eqref{eq:CH2} by $W'(\varphi_\eps)$ and integrate over $\Gamma$ to deduce
\begin{align*}
\int_{\Gamma} \eps W''(\varphi_\eps) \abs{\SG \varphi_\eps}^2 + \frac{1}{\eps}(W'(\varphi_\eps))^2 \ d\mathcal{H}^2 \leq \int_{\G} \frac{\eps}{2}|\tilde\mu_\eps|^2 + \frac{1}{2\eps}(W'(\varphi_\eps))^2 \ d\mathcal{H}^2. 
\end{align*}
For $\abs{s}\geq 1$ we have $W''(s) > 0$ and thus we infer
\begin{align*}
\eps^{-1}\int_{\lbrace p\in\G \left| \abs{\varphi_\eps}\geq 1 \right. \rbrace} W'(\varphi_\eps)^2 \ d\mathcal{H}^2 \leq  \int_{\G} \frac{\eps}{2}|\tilde\mu_\eps|^2 \ d\mathcal{H}^2.
\end{align*}
As a direct consequence, we obtain
\begin{align}\label{eq:chen_large_right_hand_side_step1}
\sum_{i\in\mathcal{I}_2} \int_{\exp^\eps_{p_i}(B_1(0))} \eps \abs{\nabla_\G \varphi_\eps}^2 \leq C(\G)\eps \int_{\G} |\tilde\mu_\eps|^2 \ d\mathcal{H}^2 + C\eps^{-1}\sum_{i\in\mathcal{I}_2} \abs{\exp^\eps_{p_i}(B_1(0))}. 
\end{align}
In order to derive an estimate for the constant $\sum_{i\in\mathcal{I}_2} \abs{\exp^\eps_{p_i}(B_1(0))}$ observe that for all $i\in\mathcal{I}_2$ in question
\begin{align*}
\int_{\exp^\eps_{p_i}(B_1(0))} |\tilde\mu_\eps|^2 \ d\mathcal{H}^2 \geq R^{-2}\geq \eps^{-2}R^{-2}\frac{\abs{\exp^\eps_{p_i}(B_1(0))}}{\abs{B_1}}
\end{align*}
where $B_1 \subset \G$ denotes the unit Ball in $\G.$ Remark \ref{rem:covering} ensures again that the number of overlaps of the domains $\exp^\eps_{p_i}(B_1(0))$ can be controlled independently of $\eps.$ Hence we deduce
\begin{align*}
\sum_{i\in\mathcal{I}_2} \abs{\exp^\eps_{p_i}(B_1(0))} &\leq \eps^2\abs{B_1}R^2 \sum_{i\in\mathcal{I}_2} \int_{\exp^\eps_{p_i}(B_1(0))} |\tilde\mu_\eps|^2 \ d\mathcal{H}^2 \\
&\leq \eps^2\abs{B_1}R^2C(\G)\int_\G |\tilde\mu_\eps|^2 \ d\mathcal{H}^2.
\end{align*}
Thus estimate \eqref{eq:chen_large_right_hand_side_step1} reads
\begin{align}\label{eq:chen_large_right_hand_side_final}
\sum_{i\in\mathcal{I}_2} \int_{\exp^\eps_{p_i}(B_1(0))} \eps \abs{\nabla_\G \varphi_\eps}^2 \leq C\left[1+R^{2}\right]\eps\int_{\G} |\tilde\mu_\eps|^2 \ d\mathcal{H}^2.
\end{align}
Since the family
\[ \lbrace \exp^\eps_{p_i}(B_1(0)) \rbrace_{i=1}^{K(r)}\] of domains covers $\G$ thanks to the construction of the original atlas, the estimates \eqref{eq:chen_small_right_hand_side_final} and \eqref{eq:chen_large_right_hand_side_final} yield
\begin{align*}
&\int_{\G} \left(\xi^\eps(\varphi_\eps) \right)_+ \ d\mathcal{H}^2 \\ &\leq C\eta \int_{\G} \frac\eps2|\nabla_\G \varphi_\eps|^2 +  \eps^{-1}W(\varphi_\eps) \ d\mathcal{H}^2 + C\eps M(\eta)\int_{\G} \abs{\tilde\mu_\eps}^2 \ d\mathcal{H}^2
\end{align*}
which completes the proof. Note that $M$ has to depend on $\eta$ since our choice of $R$ depends on $\eta.$

\subsection{Proof of Proposition \ref{prop:collected_conv}}
The convergences of $\varphi_\epsk, \mu_\epsk, \theta_\epsk, v_\epsk$ and $u_\epsk$ follow from Proposition \ref{prop:conv_u} for $\varphi_\epsk,$ from Lemma \ref{l:weak_compactness_mu} and the weak compactness of bounded sets in reflexive Banach spaces in the case of $\mu_\epsk$ and $\theta_\epsk$ and finally directly from the energy estimate for $u_\epsk.$

\subsection{Proof of Theorem \ref{thm:varifold_sol}}

The main part of the proof is the construction of a suitable varifold $V$ that fulfils \eqref{eq:weak_varifold_formulation_curv}. To this end, we first consider the two measures $\lambda^\epsk$ and $h^\epsk$ defined by 
\begin{align*}
\lambda^\epsk :&= \left[\epsk\frac{\abs{\SG \varphi_\epsk}^2}{2} + \frac{1}{\epsk}W(\varphi_\epsk)\right]\ d\mathcal{H}^2(p) \ dt \quad \text{and}\\
h^\epsk :&= \left[ \epsk \SG\varphi_\epsk \otimes \SG \varphi_\epsk \right] \ d\mathcal{H}^2(p) \ dt. 
\end{align*}
Observe that $\lambda^\epsk$ and $h^\epsk$ are bounded by the energy estimate. Thus we can use the compactness properties of Radon measures (see e.g. \cite[Theorem 1.59]{AFP}) to deduce the existence of measures $\lambda$ and $h$ such that
\begin{align}
\left[\epsk\frac{\abs{\SG \varphi_\epsk}^2}{2} + \frac{1}{\epsk}W(\varphi_\epsk)\right]\ & d\mathcal{H}^2(p) \ dt \rightarrow \lambda(p,t) \quad \text{and}\nonumber\\
\left[ \epsk \SG\varphi_\epsk \otimes \SG \varphi_\epsk \right] \ & d\mathcal{H}^2(p) \ dt \rightarrow h(p,t)\label{eq:convergence_tensor_measure}
\end{align}
in the sense of measures.

Moreover, for any $Y,Z \in C([0,T]\times\Gamma;T\G)$ the inequality
\begin{align*}
&\int_0^T \int_\Gamma Y\cdot( \left(\epsk\SG\varphi_\epsk\otimes\SG\varphi_\epsk\right)Z) \ d\mathcal{H}^2 \ dt \\ \leq& \int_0^T \int_\Gamma \abs{Y}\abs{Z} \left( \frac{\epsk}{2}\abs{\SG\varphi_\epsk}^2 + \frac{1}{\epsk}W(\varphi_\epsk)\right) \ d\mathcal{H}^2 \ dt \\ 
&+ \int_0^T \int_\Gamma \abs{Y}\abs{Z} \left( \frac{\epsk}{2}\abs{\SG\varphi_\epsk}^2 - \frac{1}{\epsk}W(\varphi_\epsk) \right) \ d\mathcal{H}^2 \ dt
\end{align*}
holds. 

By Proposition \ref{prop:control_discr_measure} the second term on the right hand side is non-positive in the limit $k\rightarrow\infty.$ Thus we deduce
\begin{equation}\label{eq:abs_cont_measures}
\int_0^T \int_\Gamma Y\cdot ((dh)Z) \leq \int_0^T\int_\Gamma \abs{Y}\abs{Z} d\lambda,
\end{equation}
which proves that the measure $h$ is absolutely continuous with respect to $\lambda.$ Hence the Radon-Nikodym theorem \cite[Theorem 1.28]{AFP} grants the existence of a section $\omega$ in $\mathcal{L}(T\G,T\G)$ such that 
\begin{equation}\label{eq:radon_nikodym_h}
h(p,t) = \omega d\lambda(p,t).
\end{equation} 
Next we choose an index set $\mathcal{I}$ and disjoint sets $A_l \subset \G, l \in \mathcal{I}$ such that $\G = \mathop{\dot{\bigcup}_{l \in \mathcal{I}}} A_l.$ Moreover, let $(U_l,\alpha_l)_{l \in \mathcal{I}}$ be an atlas of $\G$ such that $A_l \subset U_l$ for each $l \in \mathcal{I}.$ 

With respect to the charts $\alpha_l,$ we define local measures $h_l^\epsk$ on $\alpha_j(A_l) \subset \R^2$ by setting
\begin{align*}
h_l^\epsk := \epsk \left(g^{si}g^{rj} \frac{\partial \varphi_\epsk}{\partial x_r}\frac{\partial \varphi_\epsk}{\partial x_s} \right)_{i,j=1,2} \sqrt{\abs{g}} \ dx \ dt.
\end{align*}
Note that we use Einstein summation convention with respect to $s$ and $r$ and that the measures $h_l^\epsk$ are just $h^\epsk \mres A_l$ expressed in local coordinates.
As such, the bound on $h^\epsk$ from the energy estimate carries over to $h_l^\epsk$ and we deduce the existence of measures $h_l$ such that 
\begin{align}\label{eq:convergence_local_tensor_measure}
\epsk \left(g^{si}g^{rj} \frac{\partial \varphi_\epsk}{\partial x_r}\frac{\partial \varphi_\epsk}{\partial x_s} \right)_{i,j=1,2} \sqrt{\abs{g}} \ dx \ dt \rightarrow dh_l(x,t)
\end{align}
in the sense of measures. We also introduce the measures $h_l^{i,j}$ as the limit measures for every entry in $h^\epsk_l.$

Analogously, we define the measures $\lambda^\epsk_l$ and $\lambda_l$ as $\lambda^\epsk \mres A_l$ in local coordinates and its limit measure respectively. 

Furthermore, the arguments leading to \eqref{eq:abs_cont_measures} also imply the absolute continuity $h_l << \lambda_l.$ This implies the existence of $\lambda_l$-measurable functions $\nu_{i,j}$ such that 
\begin{equation}\label{eq:radon_nikodym_h_l}
dh^{i,j}_l(x,t) = \nu^{i,j}_ld\lambda_l(x,t).
\end{equation}
At the same time, the matrix $\left( \nu^{i,j}_l\right)_{ij}$ is symmetric and positive definite by definition and can thus be written as
\begin{equation}\label{eq:eigenvector_rep}
\left( \nu^{i,j}_l\right)_{i,j=1,2}=\sum_{k=1}^2 \tilde{c}^l_k \vec \nu_k^l \otimes \vec \nu_k^l \quad\lambda_l\text{-almost everywhere.}
\end{equation}
Here the $\lbrace \vec \nu^l_k\rbrace$ are an orthonormal basis of $\R^2$ consisting of eigenvectors. The functions $\tilde c^l_k$ fulfil $\tilde c^l_k \in [0,1]$ since equation \eqref{eq:abs_cont_measures} directly shows that the matrix $\left( \nu_{i,j}\right)_{ij}$ cannot have eigenvalues larger than $1.$ Moreover, we note for later use that
\begin{equation}\label{eq:eigenvectors_to_identity}
\sum_{k=1}^2 \vec \nu_k^l \otimes \vec \nu_k^l = \Id.
\end{equation}
Let $Y \in C^1(\G,T\G)$ be a vector field on $\G.$ To simplify the following calculations, we denote the entries of the differential $D_\G Y$ in local coordinates by $d^Y_{i,j}$ for $i,j=1,2.$

Now observe that the transformation formula infers for all $l \in \mathcal{I}$ and all $Y \in C^1(\G,T\G)$
\begin{align*}
\int_0^T \int_{A_l} D_\G Y:&\left[ \epsk \SG\varphi_\epsk \otimes \SG \varphi_\epsk \right] \ d\mathcal{H}^2(p) \ dt \\ &= \int_0^T \int_{\alpha_l(A_l)} \epsk \left(d^Y_{i,j}\right)_{i,j=1,2}:\left(g^{si}g^{rj} \frac{\partial \varphi_\epsk}{\partial x_r}\frac{\partial \varphi_\epsk}{\partial x_s} \right)_{i,j=1,2} \sqrt{\abs{g}} \ dx \ dt.
\end{align*}
For the left hand-side we have
\[ \int_0^T \int_{A_l} D_\G Y:\left[ \epsk \SG\varphi_\epsk \otimes \SG \varphi_\epsk \right] \ d\mathcal{H}^2(p) \ dt \rightarrow \int_0^T \int_{A_l} D_\G Y: \omega d\lambda \]
by \eqref{eq:convergence_tensor_measure} and \eqref{eq:radon_nikodym_h}. Furthermore, the right hand-side fulfils 
\begin{align*}
\int_0^T \int_{\alpha_l(A_l)} \epsk \left(d^Y_{i,j}\right)_{i,j=1,2}:&\left(g^{si}g^{rj} \frac{\partial \varphi_\epsk}{\partial x_r}\frac{\partial \varphi_\epsk}{\partial x_s} \right)_{i,j=1,2} \sqrt{\abs{g}} \ dx \ dt \\ 
&\quad\longrightarrow \int_0^T \int_{\alpha_l(A_l)} \left(d^Y_{i,j}\right)_{i,j=1,2}: \left(\nu^{i,j}_l\right)_{i,j=1,2} d\lambda_l
\end{align*}
by \eqref{eq:convergence_local_tensor_measure} and \eqref{eq:radon_nikodym_h_l}. Therefore we deduce 
\begin{align}\label{eq:limit_measures_local_equals_global}
\int_0^T \int_{A_l} D_\G Y:\omega d\lambda &= \int_0^T \int_{\alpha_l(A_l)} \left(d^Y_{i,j}\right)_{i,j=1,2}: \left(\nu^{i,j}_l\right)_{i,j=1,2} d\lambda_l.
\end{align}

We now define the varifold $V$ as follows. We return to the functions $\tilde{c}^l_k$ in \eqref{eq:eigenvector_rep} and define 
\begin{equation}\label{def:coeff_varifold}
c_k^l(x,t):= 1+\tilde{c}_k^l(x,t) - \sum_{m=1}^2 \tilde{c}_m^l(x,t)
\end{equation}
on $\alpha_l(A_l) \times [0,T].
$
The Radon measure $V^l$ on $[0,T]\times G(A_l)$ defined by
\begin{equation*}
dV^l_t(p,S) = \sum_{k=1}^2 \alpha_l^\ast c^l_k(p,t) d\lambda(p,t)\delta_{\alpha_l^\ast \vec \nu^l_k(p,t)}(S) 
\end{equation*}
is a varifold for almost all times $t \in [0,T]$ since by \cite[Theorem 2.28]{AFP} the measures $\lambda_l$ and $h_l^{i,j}$ can be split into a spatial and a time part, i.e. there exist measures $\lambda_l^t$ and $h_l^{i,j,t}$ such that $d\lambda = d\lambda_l^t \ dt$ and $dh_l^{i,j} = dh_l^{i,j,t} \ dt.$

Finally, we define $V$ by
\begin{equation}\label{eq:def_final_varifold}
\int_0^T \int_{G_1(\G)} \eta(p,S) \ dV_t(p,S) := \sum_{l \in \mathcal{I}} \int_0^T \int_{G_1(A_l)} \eta(p,S) \ dV^l_t(p,S)\, dt
\end{equation}
for all $\eta \in C_0(G_1(\G)).$

To conclude the proof, we show that the varifold $V$ from \eqref{eq:def_final_varifold} fulfils equation \eqref{eq:weak_varifold_formulation_curv}.
Let $Y$ be any vector field $Y\in C^1(\Gamma,T\G)$ and test \eqref{eq:CH2} with by $Y\cdot \SG \varphi_\eps$. The resulting equation is
\begin{align*}
  \int_\G (\mu + \tfrac\theta2) Y\cdot \SG \varphi_\eps \, d\mathcal{H}^2 = -\int_\G D_\G Y: \left(\left(\frac{\eps |\SG \varphi_\eps|^2}2 + \frac{W(\varphi)}\eps  \right)\operatorname{Id}- \eps \SG \varphi_\eps\otimes \SG \varphi_\eps \right) \, d\mathcal{H}^2.
\end{align*}
For all such vector fields $Y,$ the convergence results from Proposition \ref{prop:collected_conv} allow us to take the limit $k\rightarrow\infty$ in latter equation. As a result, we deduce 
\begin{align}
&-\int_0^T \int_\Gamma 2\chi_{Q_t}\div_\G((\mu + \tfrac\theta2) Y) \ d\mathcal{H}^2 \ dt = -\int_0^T \int_\Gamma d_\G Y : \left( \Id - \omega \right) d\lambda \nonumber \\
=& -\int_0^T \int_\Gamma D_\G Y : \Id \ d\lambda + \sum_{l\in\mathcal{I}} \int_0^T \int_{A_l} D_\G Y : \omega \ d\lambda  \nonumber \\
=&  -\int_0^T \int_\Gamma D_\G Y : \Id \ d\lambda + \sum_{l\in\mathcal{I}} \int_0^T \int_{\alpha_l(A_l)} \left(d^Y_{i,j}\right)_{i,j=1,2}: \left(\nu^{i,j}_l\right)_{i,j=1,2} d\lambda_l \nonumber \\
=&  -\int_0^T \int_\Gamma D_\G Y : \Id \ d\lambda + \sum_{l\in\mathcal{I}} \sum_{k=1}^2 \int_0^T \int_{\alpha_l(A_l)} \left(d^Y_{i,j}\right)_{i,j=1,2}: \left(\tilde{c}^l_k \vec \nu_k^l \otimes \vec \nu_k^l\right) d\lambda_l \label{eq:curvature_eq1}
\end{align}
from \eqref{eq:limit_measures_local_equals_global} and \eqref{eq:eigenvector_rep}.

At the same time, we have
\begin{align}
\langle \delta V_t , Y \rangle =& \int_{G_1(\Gamma)} D_\G Y(p):\left( \Id - S \otimes S \right) dV_t(p,S) \nonumber \\
=&\sum_{l \in \mathcal{I}} \int_{G_1(A_l)} D_\G Y(p):\left( \Id - S \otimes S \right) dV_t(p,S) \nonumber \\ 
=& \sum_{l \in \mathcal{I}} \sum_{k=1}^2 \int_{G_1(A_l)} D_\G Y(p):\left( \Id - S \otimes S \right) \alpha_l^\ast c_k^l(p,t) \ d\lambda(p,t)\delta_{\alpha_l^\ast \vec\nu_k^l}(S) \nonumber \\
=& \sum_{l \in \mathcal{I}} \sum_{k=1}^2 \int_{A_l} D_\G Y(p):\left( \Id - \alpha_l^\ast \vec\nu_k^l \otimes \alpha_l^\ast \vec\nu_k^l \right) \alpha_l^\ast c_k^l(p,t) \ d\lambda(p,t) \label{eq:first_variation_first_step} 
\end{align}
by the definition of the first variation of the varifold $V$ and \eqref{eq:def_final_varifold}. To simplify the presentation, we now consider the individual summands for each $l \in \mathcal{I}$ in \eqref{eq:first_variation_first_step} and split the integrals into
\[
  I^l_1 :=  \sum_{k=1}^2 \int_{A_l} \alpha_l^\ast c_k^l(p,t) D_\G Y(p):\Id  \ d\lambda(p,t)
\]
and
\[
  I^l_2 := \sum_{k=1}^2 \int_{A_l} \alpha_l^\ast c_k^l(p,t) D_\G Y(p):\left(\alpha_l^\ast \vec\nu_k^l \otimes \alpha_l^\ast \vec\nu_k^l \right)  \ d\lambda(p,t).
\]
Using \eqref{def:coeff_varifold} we calculate
\begin{align}
I^l_1 &= \int_{A_l} \left( \sum_{k=1}^2 \alpha_l^\ast c_k^l(p,t) \right) D_\G Y(p):\Id  \ d\lambda(p,t) \nonumber \\
&= \int_{A_l} \left( 2 - \sum_{k=1}^2 \alpha_l^\ast \tilde{c}_k^l \right) D_\G Y(p):\Id  \ d\lambda(p,t) \nonumber \\
&= 2 \int_{A_l} D_\G Y(p):\Id  \ d\lambda(p,t) - \int_{A_l} \left(\sum_{k=1}^2 \alpha_l^\ast \tilde{c}_k^l \right) D_\G Y(p):\Id  \ d\lambda(p,t).\label{eq:first_variation_I1}
\end{align}
Furthermore, we infer from \eqref{eq:eigenvectors_to_identity} that
\begin{align}
I_2^l &= \sum_{k=1}^2 \int_{\alpha_l(A_l)} c_k^l D_\G Y : \vec\nu_k^l \otimes \vec\nu_k^l \ d\lambda_l \nonumber \\
&= \sum_{k=1}^2 \int_{\alpha_l(A_l)} \left( 1 + \tilde{c}_k^l - \sum_{m=1}^2 \tilde{c}_m^l \right) \left(d^Y_{i,j}\right)_{i,j=1,2}:\vec\nu_k^l \otimes \vec\nu_k^l \ d\lambda_l \nonumber \\
&= \int_{\alpha_l(A_l)} \left(d^Y_{i,j}\right)_{i,j=1,2}:\Id \ d\lambda_l + \sum_{k=1}^2 \int_{\alpha_l(A_l)} \tilde{c}_k^l \left(d^Y_{i,j}\right)_{i,j=1,2}:\vec\nu_k^l \otimes \vec\nu_k^l \ d\lambda_l \nonumber \\ 
&\qquad \qquad \qquad \qquad \qquad \qquad \qquad \qquad \qquad- \int_{\alpha_l(A_l)} \left(\sum_{m=1}^2 \tilde{c}_m^l\right) \left(d^Y_{i,j}\right)_{i,j=1,2}:\Id  \ d\lambda_l \nonumber \\
&= \int_{A_l} D_\G Y(p):\Id  \ d\lambda(p,t) - \int_{A_l} \left(\sum_{k=1}^2 \alpha_l^\ast \tilde{c}_k^l \right) D_\G Y(p):\Id  \ d\lambda(p,t) \nonumber \\ 
&\qquad \qquad \qquad \qquad \qquad \qquad \qquad \qquad \qquad+ \sum_{k=1}^2  \int_{\alpha_l(A_l)} \left(d^Y_{i,j}\right)_{i,j=1,2}: \left(\tilde{c}^l_k \vec \nu_k^l \otimes \vec \nu_k^l\right) d\lambda_l. \label{eq:first_variation_I2}
\end{align}
We plug \eqref{eq:first_variation_I1} and \eqref{eq:first_variation_I2} into \eqref{eq:first_variation_first_step} and obtain
\begin{align}\label{eq:curvature_eq2}
\langle \delta V_t , Y \rangle =& \sum_{l\in\mathcal{I}} \left( I_1^l + I_2^l \right) \nonumber \\
=& \sum_{l\in\mathcal{I}} \int_{A_l} D_\G Y(p):\Id  \ d\lambda(p,t) - \sum_{k=1}^2  \int_{\alpha_l(A_l)} \left(d^Y_{i,j}\right)_{i,j=1,2}: \left(\tilde{c}^l_k \vec \nu_k^l \otimes \vec \nu_k^l\right) d\lambda_l.
\end{align}
Combining \eqref{eq:curvature_eq1} and \eqref{eq:curvature_eq2}, we have thus proved \eqref{eq:weak_varifold_formulation_curv}. Finally, the other equations  can be obtained in a straight forward manner and \eqref{eq:Energy} in the same way as in \cite[Proof of Theorem~2.1]{ChenCH}.

\end{document}